\allowdisplaybreaks \numberwithin{equation}{section}
\numberwithin{equation}{section}
\newtheorem{theorem}{Theorem}[section]
\newtheorem{proposition}[theorem]{Proposition}
\newtheorem{lemma}[theorem]{Lemma}
\theoremstyle{definition}
\newtheorem{definition}[theorem]{Definition}
\theoremstyle{remark}
\newtheorem{remark}[theorem]{Remark}
\begin{document}

\title
[Desingularization of vortices for the incompressible Euler equation on a sphere]{Desingularization of vortices for the incompressible Euler equation on a sphere}

 \author{Daomin Cao, Shuanglong Li, Guodong Wang}
 
 \address{Institute of Applied Mathematics, AMSS, Chinese Academy of Sciences, Beijing 100190, and University of Chinese Academy of Sciences, Beijing 100049,  PR China}
\email{dmcao@amt.ac.cn}

\address{School of Mathematics, Southwestern University of Finance and Economics, Chengdu 611130, PR China }
\email{lishl@swufe.edu.cn}

\address{School of Mathematical Sciences, Dalian University of Technology, Dalian 116024, PR China}
\email{gdw@dlut.edu.cn}


\begin{abstract}
In this paper, we construct a family of global  solutions to the incompressible Euler equation on a standard 2-sphere. These solutions are odd-symmetric  with respect to the equatorial plane and rotate with a constant angular speed around the polar axis.   More importantly, these solutions ``converges" to a pair of point vortices with equal strength and opposite signs. The construction is achieved by maximizing the energy-impulse functional relative to a family of suitable rearrangement classes and analyzing the asymptotic behavior of the maximizers. Based on their variational characterization, we also prove the stability of these rotating solutions with respect to odd-symmetric perturbations. 

\end{abstract}

\maketitle

\tableofcontents
\section{Introduction and main results}\label{sec1}

\subsection{Background and notations}

Flows with  concentrated vorticity represent a fundamental concept in fluid dynamics, in which the rotation of the fluid is highly localized within small regions. For a two-dimensional ideal (i.e., incompressible and inviscid) fluid, these flows are often modeled as \emph{point vortices}, where each vortex is mathematically represented by a Dirac measure, denoting the location of a concentrated source of vorticity. The point vortex model idealizes complex fluid behavior, enabling simplified analysis of the interactions, stability, and movement of concentrated vorticity blobs.  Since the pioneering works of Helmholtz \cite{Helm} and Kirchhoff \cite{Kir} in the 19th century, the study of the point vortex model has attracted the attention of numerous mathematicians and physicists.

 It is well known that the evolution of a two-dimensional ideal fluid is governed by the Euler equation. Formally, the point vortex model can be regarded as a singular limit of the Euler equation by neglecting the self-induced singular velocity for each concentrated vorticity blob. An important issue is to investigate the relationship between the Euler equation  and the point vortex model on a rigorous level. More specifically, there are two types of problems:
\begin{itemize}
  \item [(P1)] Suppose that the initial vorticity of a two-dimensional ideal fluid is sharply concentrated around a finite number of points, does the evolved vorticity according to the Euler equation remain concentrated? If so, do the concentration points satisfy the point vortex model?  How to obtain the optimal concentration rate? 
  \item [(P2)] Can we construct as many coherent flow patterns (such as steady/traveling solutions) of the Euler equation as possible such that the vorticity evolves like point vortices? What about the dynamical stability of these flow patterns?
\end{itemize}
In the literature, (P1) and (P2) are often referred to as \emph{desingularization of vortices}. The study of (P1) was initiated by Marchioro and Pulvirenti \cite{MP83}, then followed by many authors \cite{BM18,CM15,CS22,DI,M88,MP83,MP93,T87}. Roughly speaking, the answer to (P1) is positive, although some open problems still remain, such as the optimal  concentration rate.  As to (P2), there have been a large number of results concerning the desingularization of various relative equilibria of point vortices in the literature. See \cite{CLW14,CPY15,CWZ20,EM91,EM94,SV10,T83,W23,W241,W242} and the references therein. There are mainly two methods to tackle (P2): one from the perspective of the stream function, which reduces the problem to solving a singularly perturbed semilinear elliptic equation  \cite{CLW14,CPY15,SV10}, and the other  from the perspective of vorticity, which usually involves solving a variational problem related to vorticity \cite{CWZ20,EM91,EM94,T83,W23,W241,W242}.

The aforementioned works are all conducted in two-dimensional Euclidean domains. It is natural to ask whether similar desingularization results hold on a two-dimensional surface, particularly regarding the difficulties that might arise from the non-trivial metric and the different topology of the surface. To the best of our knowledge, there is very little research in this respect, although the motion of a finite number of point vortices on a closed surface has already been investigated by many authors \cite{Bo77,BK08,Ki87,New01,WQ21}.

The purpose of this paper is to fill the research gap by  studying the desingularization problem (P2) on  a standard two-dimensional sphere. We consider the simplest point vortex equilibrium, i.e.,   a  pair of point vortices placed symmetrically at equal latitudes on either side of the same longitude that rotates with a constant angular speed around the polar axis.   Based on  a variational approach, we construct a family of rotating solutions of the Euler equation such that the vorticity has small support and ``converges" to the  pair of point vortices. Although the sphere is the simplest closed surface, the study of fluid motion on it holds significant value in both theoretical and applied sciences, including geophysical and atmospheric dynamics. 
We believe that this work can inspire and assist in addressing more complicated desingularization problems on general  closed surfaces.

Before presenting the main results, we list some notations that will be frequently used throughout this paper:
 \begin{itemize}
   \item  Denote by  $\mathbb S^2$  the unit 2-sphere in $\mathbb R^3$ centered at the origin,
\[\mathbb S^2=\left\{\mathbf x\in\mathbb R^3\mid x_1^2+x_2^2+x_3^2=1\right\},\]
where $(x_1,x_2,x_3)$ is the Cartesian coordinates of $\mathbf x$ in $\mathbb R^3$. 
The metric on  $\mathbb S^2$ is the standard metric induced by the ambient space $\mathbb{R}^3$. Denote by  $|\cdot|$ the Riemannian measure (i.e., surface area) on   $\mathbb S^2$ associated with the standard metric. 
\item
 The spherical coordinates $(\varphi,\theta)$ of $\mathbb S^2$ are 
\[x_1=\cos\theta\cos\varphi,\quad x_2=\cos\theta\sin\varphi,\quad x_3=\sin\theta,\quad  (\varphi,\theta)\in(0,2\pi)\times \left(-\frac{\pi}{2},\frac{\pi}{2}\right).\]
The Riemannian volume element in spherical coordinates $(\varphi,\theta)$ is
\[d\sigma=\cos\theta d\varphi d\theta.\]
 \item For $\mathbf x\in \mathbb S^2$, denote by $T_{\mathbf x}\mathbb S^2$ the tangent plane of $\mathbb S^2$ at $\mathbf x$. For $\mathbf p\in T_{\mathbf x}\mathbb S^2$, denote by $J\mathbf p$ the  \emph{clockwise} rotation through $\pi/2$ of $\mathbf p$ in $T_{\mathbf x}\mathbb S^2$, i.e.,
    \[J\mathbf p=\mathbf p\times \mathbf x.\]
\item Denote 
\[ N:=(0,0,1),\quad S:=(0,0,-1),\]
and
\[\mathbb S^2_+:=\left\{\mathbf x=(x_1,x_2,x_3)\in\mathbb S^2\mid x_3>0\right\},\quad \mathbb S^2_-:=\left\{\mathbf x=(x_1,x_2,x_3)\in\mathbb S^2\mid x_3<0\right\}.\]
\item For $\mathbf x,\mathbf y\in\mathbb S^2$, denote by $d(\mathbf x,\mathbf y)$ the geodesic distance 
between $\mathbf x$ and $\mathbf y$. It is easy to check that
\begin{equation}\label{gds}
|\mathbf x-\mathbf y|=2\sin\left(\frac{d(\mathbf x,\mathbf y)}{2}\right).
\end{equation}
Denote by $B_r(\mathbf x)$ the geodesic disk centered at $\mathbf x$ with geodesic radius $r$, i.e.,
\[B_r(\mathbf x):=\left\{\mathbf y\in\mathbb S^2\mid d(\mathbf x,\mathbf y)<r\right\}.\]
The surface area of $B_r(\mathbf x)$ can be computed as follows:
\begin{equation}\label{sagd}
|B_r(\mathbf x)|=|B_r(N)|=\int_0^{2\pi}\int_{\pi/2-r}^{\pi/2}\cos\theta d\theta d \varphi=2\pi(1-\cos r).
\end{equation}
\item  Let  $\Delta$ be the Laplace-Beltrami operator on $\mathbb S^2$ with respect to the standard metric.  The Green function  of $-\Delta$ on $\mathbb S^2$ (which is unique up to a constant) is
 \begin{equation}\label{gfs1}
 G(\mathbf x,\mathbf y)=-\frac{1}{2\pi}\ln|\mathbf x-\mathbf y|,
 \end{equation}
 where $|\mathbf x-\mathbf y|$ is the distance between $\mathbf x$ and $\mathbf y$ in $\mathbb R^3$. See  \cite{Au,CG}. For $v:\mathbb S^2\mapsto \mathbb R,$ define  
  \begin{equation}\label{ine01}
 \mathcal Gv(\mathbf x):=\int_{\mathbb S^2}G(\mathbf x,\mathbf y)v(\mathbf y)d\sigma=-\frac{1}{2\pi}\int_{\mathbb S^2}\ln|\mathbf x-\mathbf y|v(\mathbf y)d\sigma, 
 \end{equation}
 Then $\mathcal Gv$ satisfies
  \begin{equation}\label{ine02}
  -\Delta \mathcal Gv=v-\frac{1}{|\mathbb S^2|}\int_{\mathbb S^2}vd\sigma\,\,\,\,\mbox{on}\,\,\,\,\mathbb S^2.
  \end{equation}
If  additionally $\int_{\mathbb S^2}vd\sigma=0,$ then 
    \begin{equation}\label{ine021}
  -\Delta \mathcal Gv=v \,\,\,\,\mbox{on}\,\,\,\,\mathbb S^2,\quad \int_{\mathbb S^2}\mathcal Gv d\sigma=0.
  \end{equation}
 \item
 The Green function  of $-\Delta$ on $\mathbb S^2_+$  with zero boundary condition is
 \begin{equation}\label{gfs2}
 G_+(\mathbf x,\mathbf y)= \frac{1}{2\pi}\ln\frac{|\mathbf x-  \mathbf y'|}{|\mathbf x-\mathbf y|},
 \end{equation}
 where $\mathbf y'$ is the reflection point of $\mathbf y$ with respect to the equatorial plane, i.e., 
 \begin{equation}\label{rep1}
 \mathbf y'=(y_1,y_2,-y_3)\,\, \mbox{ for }\,\, \mathbf y=(y_1,y_2,y_3).
 \end{equation}
For $v:\mathbb S_+^2\mapsto \mathbb R,$ define 
  \begin{equation}\label{ine03}
\mathcal G_+v(\mathbf x):=\int_{\mathbb S_+^2}G_+(\mathbf x,\mathbf y)v(\mathbf y)d\sigma= \frac{1}{2\pi}\int_{\mathbb S_+^2}\ln\frac{|\mathbf x-\mathbf y'|}{|\mathbf x-\mathbf y|}v(\mathbf y)d\sigma.
 \end{equation}
Then $\mathcal G_+v$ satisfies
   \begin{equation}\label{ine04}
   -\Delta \mathcal G_+v=v\,\,\,\,\mbox{on}\,\,\,\,\mathbb S_+^2,\quad   \mathcal G_+v\big|_{\partial \mathbb S^2_+}=0.
   \end{equation}
   \item For $\alpha\in\mathbb R$ and some \emph{unit} vector $\mathbf p\in\mathbb R^3$, denote by $\mathsf R^\mathbf p_\alpha$   the $\alpha$-degree rotation around the vector $\mathbf p$. By  Rodrigues' rotation formula,  
\begin{equation}\label{rod1}
\mathsf R^{\mathbf p}_\alpha\mathbf x=(\cos\alpha)\mathbf x+\sin\alpha(\mathbf p\times \mathbf x)+(1-\cos\alpha)(\mathbf p\cdot\mathbf x)\mathbf p\quad\forall\,\mathbf x\in\mathbb R^3.
\end{equation}

   \item For any measurable (with respect to the Riemannian measure) function $v:\mathbb S_+^2\mapsto \mathbb R,$  denote by $\mathcal R_v$ the rearrangement class of $v$ on $\mathbb S^2_+$, i.e.,
       \[\mathcal R_v:=\left\{w:\mathbb S^2_+\mapsto\mathbb R\mid |\{\mathbf x\in\mathbb S^2_+\mid w(\mathbf x)>s\}|= |\{\mathbf x\in\mathbb S^2_+\mid v(\mathbf x)>s\}|\,\,\forall\,s\in\mathbb R\right\}.\]
 \end{itemize}

  \subsection{Euler equation on a sphere}\label{sec11}
  
Consider an ideal
fluid of unit density on $\mathbb S^2$. In terms of the velocity $\mathbf v(t,\mathbf x)\in T_{\mathbf x}\mathbb S^2$ and the scalar pressure $P(t,\mathbf x)$, the motion of the fluid can be described by the following incompressible Euler equation:
 \begin{equation}\label{eupf}
  \begin{cases}
    \partial_t\mathbf v+\nabla_{\mathbf v}\mathbf v= -\nabla P, &  t\in\mathbb R, \,\,\mathbf x\in\mathbb S^2, \\
    {\rm div}\, \mathbf v=0.
  \end{cases}
\end{equation}
where $\nabla_{\mathbf v}$ is the  covariant derivative along $\mathbf v$,
$\nabla$ is the gradient operator, and ${\rm div}$ is the divergence operator, all with respect to the standard metric on $\mathbb S^2$. Similar to the case of the Euler equation in two-dimensional Euclidean domains, global well-posedness of \eqref{eupf} in suitable function spaces can be proved by using the classical methods in \cite{MB,MP94}. For instance, a detailed proof of global well-posedness of \eqref{eupf}  in $H^s$ for any $s>2$  can be found in \cite{Ta16}.
Note that one can also consider a rotating sphere around some axis, which is physically meaningful (cf. \cite{CG}). However, the main results of this paper are not closely related to the rotation  effect  of the sphere, so we only provide a brief discussion on the rotating  case in Section \ref{sec6}.

Below we derive the vorticity formulation of \eqref{eupf}. Since $\mathbf v$ is divergence-free, there exists a scalar function $\psi$, called the \emph{stream function},  such that 
\[\mathbf v:=J\nabla\psi.\]
 The  \emph{vorticity} (also referred to as the  \emph{relative vorticity}) $\omega$ is  defined by 
\[\omega:={\rm div}(J\mathbf v).\]
Note that $\omega$ necessarily satisfies the Gauss constraint
 \begin{equation}\label{s01}
 \int_{\mathbb S^2}\omega d\sigma =0.
\end{equation}
It is easy to check that   $\psi$ and  $\omega$ satisfy the  Poisson equation
 \begin{equation}\label{s02}
 -\Delta\psi =\omega.
 \end{equation}
By adding a suitable constant to $\psi$, we can assume that
 \begin{equation}\label{s03}
 \int_{\mathbb S^2}\psi d\sigma=0.
 \end{equation}
 Using the Green operator $\mathcal G$ (cf. \eqref{ine01})  and taking into account   \eqref{ine021} and \eqref{s01}, we have that
\[
 \psi  =\mathcal G \omega,\quad \mathcal G \omega(t,\mathbf x) :=-\frac{1}{2\pi}\int_{\mathbb S^2}\ln|\mathbf x-\mathbf y|\omega(t,\mathbf x)d\sigma.
\] 
On the other hand, applying the operator ${\rm div}J$ to the first equation of \eqref{eupf} yields
\[\partial_t\omega+\mathbf v\cdot\nabla\omega=0.\]
To summarize,  the  Euler equation \eqref{eupf} in terms of the vorticity $\omega$ can be written as
\begin{equation}\label{vor0}
\begin{cases}
\partial_t\omega+  J\nabla \mathcal G\omega \cdot\nabla \omega=0,\\
\mathcal G\omega (t,\mathbf x)=-\frac{1}{2\pi}\int_{\mathbb S^2}\ln|\mathbf x-\mathbf y|\omega(t,\mathbf x)d\sigma.
\end{cases}
\end{equation}

An  odd-symmetric solution $\omega(t,\mathbf x)$ of \eqref{vor0} is a solution  such that
\[\omega(t,\mathbf x)=-\omega(t, \mathbf x')\quad \forall\,\mathbf x\in\mathbb S^2,\]
 where $ \mathbf x'$ is the reflection point of $\mathbf x$ with respect to the  equatorial plane (cf. \eqref{rep1}).
 In the  odd-symmetric setting, it suffices to consider the following Euler equation on the northern hemisphere $\mathbb S^2_+$:
 \begin{equation}\label{eun0}
 \begin{cases}
 \partial_t\omega +J\nabla \mathcal G_+\omega \cdot\nabla \omega=0, \quad t\in\mathbb R,\,\mathbf x\in \mathbb S^2_+,\\
\mathcal G_+\omega (t,\mathbf x)= \frac{1}{2\pi}\int_{\mathbb S^2}\ln\frac{|\mathbf x-\mathbf y'|}{|\mathbf x-\mathbf y|}\omega(t,\mathbf x)d\sigma.
 \end{cases}
 \end{equation}
 In fact, it is easy to check, at least formally, that
 \begin{itemize}
   \item [(1)] if $\omega$ is an odd-symmetric solution of  \eqref{vor0},  then the restriction of $\omega$ on $\mathbb S^2_+$  solves \eqref{eun0};
   \item [(2)]if $\omega $ solves \eqref{eun0}, then
   \[\tilde \omega:=\begin{cases}
              \omega (\mathbf x), & \mbox{if } \mathbf x\in \mathbb S^2_+, \\
             -\omega ( \mathbf x'), & \mbox{if } \mathbf x\in \mathbb S^2_-  \\
            \end{cases}\]
      solves \eqref{vor0}.
 \end{itemize}
In the rest of this paper,   we will mainly be focusing on odd-symmetric solutions, so it suffices to consider the vorticity equation  \eqref{eun0}.

Conservation laws  play an important role in this paper. For \eqref{vor0}, there are three conserved quantities: the kinetic energy, the projection of vorticity onto the space of degree-1 spherical harmonics, and the distribution function of vorticity (cf. Proposition 1 of \cite{CG}).
Accordingly, 
for any sufficiently regular solution $\omega(t,\mathbf x)$ of \eqref{eun0},   the following three conservation laws hold:
\begin{equation}\label{cl01}
  E(\omega(t,\cdot))=E(\omega(0,\cdot))\quad\forall\,t\in\mathbb R,\quad E(\omega):= \frac{1}{2}\int_{\mathbb S_+^2}\omega\mathcal G_+\omega d\sigma,
\end{equation}
\begin{equation}\label{cl02}
  I(\omega(t,\cdot))=I(\omega(0,\cdot))\quad\forall\,t\in\mathbb R,\quad I(\omega):=  \int_{\mathbb S_+^2}x_3\omega d\sigma,
\end{equation}
\begin{equation}\label{cl03}
 \omega(t,\cdot)\in\mathcal R_{\omega(0,\cdot)}\quad\forall\,t\in\mathbb R,
\end{equation}
where $E(\omega)$ is the kinetic energy, and $I(\omega)$  can be regarded as the impulse (linear momentum parallel to the equator) of the fluid.

A rotating solution  around the polar axis $\mathbf e_3=(0,0,1)$ of \eqref{eun0} is a solution of the form
\[\omega(t,\mathbf x)=v(\mathsf R^{\mathbf e_3}_{-\lambda t}\mathbf x),\]
where $v:\mathbb S^2_+\mapsto \mathbb R$, and $\lambda$ is a constant representing the angular speed of the rotation.
Using \eqref{rod1}, one can verify that $v(\mathsf R^{\mathbf e_3}_{-\lambda t}\mathbf x)$  solves \eqref{eun0} if and only if $v$ satisfies
\begin{equation}\label{xz01}
J\nabla(\mathcal G_+v-\lambda x_3)\cdot\nabla v=0.
\end{equation}
The weak formulation of \eqref{xz01} is that
   \begin{equation}\label{wsf1}
    \int_{\mathbb S^2_+} vJ\nabla (\mathcal G_+v-\lambda x_3)\cdot\nabla\xi d\sigma=0\quad\forall\,\xi\in C_c^\infty(\mathbb S^2_+).
    \end{equation}
 Note that by the regularity theory for elliptic equations (cf. Lemma 3.1 of \cite{CWZ23}) and Sobolev embedding (cf.  \cite{Au,Hebey}), the integral in \eqref{wsf1} makes sense when   $v\in L^{4/3}(\mathbb S^2_+)$.

 \subsection{Point vortices on a sphere}
For $N$ point vortices on $\mathbb S^2$  located at $\mathbf x_1,\cdot\cdot\cdot,\mathbf x_N$, the velocity generated by the $i$-th vortex is
  \[\mathbf v_i:=\kappa_iJ\nabla_\mathbf x G(\mathbf x,\mathbf x_i), \]
   where $G$ is the Green function given by \eqref{gfs1}. 
  Neglecting the self-interaction for each vortex, which is reasonable by symmetry,  we get 
 the following system of ODEs:
 \begin{equation}\label{odes1}
 \frac{d\mathbf x_i }{dt}=\sum_{j\neq i}\kappa_jJ\nabla_\mathbf x G(\mathbf x,\mathbf x_j)\bigg|_{\mathbf x=\mathbf x_i},\quad i=1,\cdot\cdot\cdot,N.
 \end{equation}
By a simple calculation,   \eqref{odes1} can be written as 
 \begin{equation}\label{odes2}
 \frac{d\mathbf x_i }{dt}=\sum_{j\neq i}\frac{\kappa_j}{2\pi}\frac{\mathbf x_j\times\mathbf x_i}{|\mathbf x_i-\mathbf x_j|^2},\quad i=1,\cdot\cdot\cdot,N.
 \end{equation}
This is the point vortex model on $\mathbb S^2$, which was  first  obtained by Bogomolov \cite{Bo77}.  Note that \eqref{odes2} is  a Hamiltonian system with the Hamiltonian $\mathcal H$ given by
 \[\mathcal H(\mathbf x_1,\cdot\cdot\cdot,\mathbf x_N)=-\frac{1}{2\pi}\sum_{i<j}\kappa_i\kappa_j\ln|\mathbf x_i-\mathbf x_j|.\]
 Apart from  $\mathcal H,$ another  conserved quantity of \eqref{odes2} is the following vector:
 \[\mathbf M(\mathbf x_1,\cdot\cdot\cdot,\mathbf x_N)=\sum_{i=1}^N\kappa_i\mathbf x_i.\]
  See Chapter 4 of \cite{New01}.
 
For a 2-vortex system, i.e, $N=2,$  \eqref{odes2} becomes
  \begin{equation}\label{odes3}
 \frac{d\mathbf x_1 }{dt}= \frac{\kappa_2}{2\pi}\frac{\mathbf x_2\times\mathbf x_1}{|\mathbf x_1-\mathbf x_2|^2},\quad
  \frac{d\mathbf x_2 }{dt}= \frac{\kappa_1}{2\pi}\frac{\mathbf x_1\times\mathbf x_2}{|\mathbf x_1-\mathbf x_2|^2}.
 \end{equation}
If  $\kappa_1=-\kappa_2=\kappa$ and
 \[\begin{cases}
 \mathbf x_1(0)=\left(\cos\theta_0 \cos\varphi_0,
\cos\theta_0\sin\varphi_0,\sin\theta_0\right),\\
  \mathbf x_2(0)=\left(\cos\theta_0\cos\varphi_0,
\cos\theta_0\sin\varphi_0,-\sin \theta_0\right),
 \end{cases}\]
 where $\theta_0, \varphi_0\in\mathbb R$, then the solution to \eqref{odes3} is given by
 \begin{equation}\label{2vm}
 \begin{cases}
 \mathbf x_1(t)=\left(\cos\theta_0 \cos (\lambda t+\varphi_0),\cos\theta_0 \sin(\lambda  t+\varphi_0),\sin\theta_0\right),\\
  \mathbf x_2(t)=\left(\cos\theta_0 \cos(\lambda t+\varphi_0),
\cos\theta_0 \sin(\lambda t+\varphi_0),-\sin\theta_0\right),
  \end{cases}
  \end{equation}
where $\lambda$ satisfies
 \begin{equation}\label{Are}
4\pi\lambda\sin\theta_0=\kappa.
 \end{equation}
This can be easily verified by a straightforward computation.
Intuitively, the explicit solution \eqref{2vm} means that a pair of point vortices that are odd-symmetric with respect to the equatorial plane would rotate around the polar axis at a constant angular speed.
It is worth mentioning that the relation \eqref{Are} is very similar to that for a pair of counter-rotating vortices in the plane \cite{W241,W242}.

 \subsection{Main results}

Our first result shows the existence of an odd-symmetric rotating  solution to \eqref{eun0} with fixed angular speed and prescribed distribution function. The precise statement is as follows.
 
 \begin{theorem}[Existence]\label{thme}
Let $1<p<\infty$ be fixed, and $\mathcal R$ be the rearrangement class of some $L^p$ function on $\mathbb S_+^2$.  For fixed $\lambda\in\mathbb R$, consider the following maximization problem:
  \[M :=\sup_{v\in\mathcal R}\mathcal E(v),\quad \mathcal E:=E-\lambda I.\tag{$V$}\]
  Denote by $\mathcal M$ the set of maximizers of $(V)$. Then
 \begin{itemize}
 \item[(i)]$\mathcal M\neq\varnothing$.
 \item [(ii)] For any  $v\in\mathcal M$, there exists some  increasing function $\phi_v:\mathbb R\mapsto \mathbb R\cup\{\pm\infty\}$ such that
\[ v=\phi_v(\mathcal G_+ v-\lambda x_3)\quad \mbox{a.e. on }\mathbb S_+^2.\]
\item[(iii)] If $p\geq 4/3$, then for any $v\in\mathcal M$,  $v(\mathsf R^{\mathbf e_3}_{-\lambda t}\mathbf x)$ solves \eqref{eun0} in the weak sense, i.e.,  $v$ satisfies \eqref{wsf1}.
    \end{itemize}
\end{theorem}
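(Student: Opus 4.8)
The plan is to run Burton's variational argument for rearrangement classes, establishing (i)--(iii) in turn.

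\emph{Part (i).} Every $w\in\mathcal R$ has the same $L^p(\mathbb S^2_+)$ norm as the fixed generator of $\mathcal R$, so $\mathcal R$ is bounded in the reflexive space $L^p(\mathbb S^2_+)$; hence its weak closure $\overline{\mathcal R}$ is convex and sequentially weakly compact, and by Burton's theorem the set of extreme points of $\overline{\mathcal R}$ is exactly $\mathcal R$. Next I would check that $\mathcal E=E-\lambda I$ is sequentially weakly continuous on $\overline{\mathcal R}$. By the elliptic estimate (cf.\ Lemma~3.1 of \cite{CWZ23}) and the two-dimensional Sobolev embedding (cf.\ \cite{Au,Hebey}), $\mathcal G_+$ is a compact operator from $L^p(\mathbb S^2_+)$ into $L^{p'}(\mathbb S^2_+)$, so $w_n\rightharpoonup w$ in $L^p$ forces $\mathcal G_+w_n\to\mathcal G_+w$ strongly in $L^{p'}$ and therefore $E(w_n)=\tfrac12\int_{\mathbb S^2_+}w_n\mathcal G_+w_n\,d\sigma\to E(w)$, while $I(w)=\int_{\mathbb S^2_+}x_3w\,d\sigma$ is weakly continuous because it is bounded and linear. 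Consequently $\mathcal E$ attains its supremum over the weakly compact convex set $\overline{\mathcal R}$; since $\mathcal R$ is weakly dense in $\overline{\mathcal R}$ this supremum equals $M$, and by the Bauer maximum principle a weakly continuous convex functional on a convex weakly compact set attains its maximum at an extreme point. Hence some maximizer lies in $\mathcal R$, i.e.\ $\mathcal M\neq\varnothing$.

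\emph{Part (ii).} Since $\mathcal G_+$ is bounded, self-adjoint and positive semidefinite on $L^p$, the quadratic functional $E$ is convex and Fr\'echet differentiable with $E'(v)=\mathcal G_+v$, so $\mathcal E$ is convex and differentiable with $\mathcal E'(v)=\mathcal G_+v-\lambda x_3$. Fix $v\in\mathcal M$ and set $u:=\mathcal G_+v-\lambda x_3$, which is bounded and continuous because $W^{2,p}(\mathbb S^2_+)\hookrightarrow C^{0,\alpha}$ for $p>1$. For every $w\in\mathcal R$, convexity gives $\mathcal E(w)\ge\mathcal E(v)+\int_{\mathbb S^2_+}u(w-v)\,d\sigma$, while maximality gives $\mathcal E(w)\le\mathcal E(v)$; subtracting yields $\int_{\mathbb S^2_+}u(w-v)\,d\sigma\le0$, so $v$ maximizes the linear functional $w\mapsto\int_{\mathbb S^2_+}uw\,d\sigma$ over $\mathcal R$. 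Burton's characterization of maximizers of a linear functional over a rearrangement class (a bathtub-principle argument: the maximizer is comonotone with $u$) then produces an increasing $\phi_v:\mathbb R\to\mathbb R\cup\{\pm\infty\}$ with $v=\phi_v(u)$ a.e.\ on $\mathbb S^2_+$; since $v\in L^p$ is finite a.e., $\phi_v$ is automatically finite on the essential range of $u$.

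\emph{Part (iii).} Suppose $p\ge4/3$. Then $\mathcal G_+v\in W^{2,p}(\mathbb S^2_+)$, so $\nabla u\in L^q(\mathbb S^2_+)$ with $1/q=1/p-1/2$ when $p<2$ (and $\nabla u$ bounded when $p\ge2$), whence $vJ\nabla u\in L^1(\mathbb S^2_+)$ and, as noted after \eqref{wsf1}, the integral there is well defined. It remains to prove $\int_{\mathbb S^2_+}vJ\nabla u\cdot\nabla\xi\,d\sigma=0$ for all $\xi\in C_c^\infty(\mathbb S^2_+)$. For a bounded increasing $C^1$ function $\phi$ with primitive $\Phi(s)=\int_0^s\phi$, the Sobolev chain rule gives $\phi(u)J\nabla u=J\nabla(\Phi(u))$ with $\Phi(u)\in W^{1,q}_{\mathrm{loc}}$; since $J$ is the pointwise $\tfrac\pi2$-rotation on each tangent plane one has $J\nabla(\Phi(u))\cdot\nabla\xi=-\nabla(\Phi(u))\cdot J\nabla\xi$ and $\mathrm{div}(J\nabla\xi)=0$ on $\mathbb S^2$, so integration by parts gives $\int_{\mathbb S^2_+}\phi(u)J\nabla u\cdot\nabla\xi\,d\sigma=0$. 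I would then recover the general case by approximating $\phi_v$: first truncate it at level $M$ and mollify to obtain bounded increasing $C^1$ functions converging a.e.\ to the truncation $\phi_v^{(M)}$; using that $\nabla u=0$ a.e.\ on every level set of $u$, the integrands $\phi_v^{(M),n}(u)\,J\nabla u\cdot\nabla\xi$ still converge a.e.\ to $\phi_v^{(M)}(u)\,J\nabla u\cdot\nabla\xi$ despite the jumps of $\phi_v^{(M)}$, and dominated convergence (dominant $M\,|J\nabla u\cdot\nabla\xi|\in L^1(\mathrm{supp}\,\xi)$) passes the identity to $\phi_v^{(M)}$; finally let $M\to\infty$, using $|\phi_v^{(M)}(u)|\le|v|\in L^p\subset L^{4/3}$. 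This yields \eqref{wsf1}, and by the equivalence recorded in Section~\ref{sec11} (the computation using \eqref{rod1}), $v(\mathsf R^{\mathbf e_3}_{-\lambda t}\mathbf x)$ is a weak solution of \eqref{eun0}.

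\emph{Main obstacle.} The delicate points are the two invocations of Burton's rearrangement theory---identifying the extreme points of $\overline{\mathcal R}$, and deducing the profile equation $v=\phi_v(u)$ from maximality---together with the issue in (iii) that $\phi_v$ is only monotone (possibly with jumps and infinite values) while $u$ has merely Sobolev regularity, which is precisely what the truncation--mollification scheme, combined with the vanishing of $\nabla u$ on level sets, is designed to handle. Verifying $\mathrm{div}(J\nabla\xi)=0$ on the curved sphere (so that the symplectic-gradient structure is unaffected by the metric) is routine but should be spelled out.
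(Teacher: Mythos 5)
Your overall strategy is the same Burton-type variational framework as the paper's, and parts (i) and (iii) are sound. In (i) you reach the conclusion by the Bauer maximum principle together with the fact that the extreme points of the weak closure of $\mathcal R$ are exactly $\mathcal R$; the paper instead linearizes $\mathcal E$ at the weak limit $\hat v$ and uses Lemma \ref{lem201} plus the identity $\mathcal E(w)-\mathcal E(\hat v)=L(w)-L(\hat v)+E(w-\hat v)$ to force $\hat v\in\mathcal R$. Both routes work; the paper's has the advantage that it simultaneously produces the fact needed in (ii) (see below). In (iii) your truncation--mollification scheme is essentially the paper's Lemma \ref{lem200}; the only real difference is how the jump set $D_{\phi_v}$ is handled: you use that $\nabla u=0$ a.e.\ on each level set of $u$, while the paper shows directly that the level sets $(\mathcal G_+v-\lambda x_3)^{-1}(s)$, $s\in D_{\phi_v}$, are null, using $-\Delta x_3=2x_3$. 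Either is acceptable.

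The genuine gap is in part (ii). You show only that $v$ is \emph{a} maximizer of the linear functional $L(w)=\int_{\mathbb S^2_+}uw\,d\sigma$ over $\mathcal R$, and then invoke Burton's characterization. But that characterization (Lemma \ref{lem202}, i.e.\ Theorem 5 of \cite{BMA}) applies only to the \emph{unique} maximizer, and the conclusion genuinely fails without uniqueness: if $u=\chi_A$ and $\mathcal R$ is generated by $\chi_B$ with $|B|<|A|$, then every $\chi_C$ with $C\subset A$, $|C|=|B|$ maximizes $L$ over $\mathcal R$, yet no such $\chi_C$ is an increasing function of $u$. Your subgradient inequality $\int u(w-v)\,d\sigma\le 0$ cannot rule this out. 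The repair is short but must be made: use the exact identity
\begin{equation*}
\mathcal E(w)-\mathcal E(v)=L(w)-L(v)+E(w-v),
\end{equation*}
valid by the symmetry of $\mathcal G_+$ (Lemma \ref{greo}(iii)), together with the strict positive-definiteness $E(w-v)>0$ for $w\neq v$ (Lemma \ref{greo}(iv)), to conclude $L(v)-L(w)\ge E(w-v)>0$ for all $w\in\mathcal R\setminus\{v\}$; hence $v$ is the unique maximizer of $L$ over $\mathcal R$ and Lemma \ref{lem202} applies. This is precisely the step the paper carries out in \eqref{htm03}--\eqref{lleq}.
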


\begin{remark}
The increasing function $\phi_v$ in Theorem \ref{thme}(ii) is usually unknown. However, if we take $\mathcal R$ as the following patch class
\[\mathcal R=\left\{\gamma\chi_{A}\mid A\subset\mathbb S^2_+\mbox{ is measurable},\,|A|=a\right\},\]
where $\gamma, a$ are given positive numbers, and $\chi_A$ denotes the characteristic function of $A$, i.e.,
\begin{equation}\label{chi1}
\chi_A(\mathbf x)=
\begin{cases}
1,&\mathbf x\in A,\\
0,&\mbox{otherwise.}
\end{cases}
\end{equation}
then each $\phi_v$ must be a Heaviside-type function, i.e., there exists some $\mu_v\in\mathbb R$ such that 
\[\phi_v(s)=
\begin{cases}
\gamma,&\mbox{if }\,\,s>\mu_v,\\
0,&\mbox{if }\,\,s\leq \mu_v.
\end{cases}\]
This can be easily verified using Lemma \ref{sce01} in Section \ref{sec3}.  
\end{remark}
 The maximization problem $(V)$ is mainly inspired by the variational principle for planar vortex pairs  \cite{T83,BNL,W242}. Existence of a maximizer and its profile rely on Burton's rearrangement theory \cite{BMA}.  The proof of Theorem \ref{thme} will be provided in Section \ref{sec3}.

Notice that the quantities involved in $(V)$ are all conserved quantities of the Euler equation (cf. \eqref{cl01}-\eqref{cl03}). This variational nature  enables us to prove certain stability  of the maximizers, which is our second result.
 \begin{theorem}[Stability]\label{thmos}
In the setting of Theorem \ref{thme}, the set of maximizers $\mathcal M$ is stable under the dynamics of \eqref{eun0} in the following sense:
  for any $\epsilon>0$, there exists some $\delta>0$, such that for any  sufficiently smooth solution  $\omega(t,\mathbf x)$ of \eqref{eun0}, 
 it holds that
  \[\inf_{v\in\mathcal M}\|\omega(0,\cdot)-v\|_{L^p(\mathbb S^2_+)}<\delta\quad\Longrightarrow\quad \inf_{v\in\mathcal M}\|\omega(t,\cdot)-v\|_{L^p(\mathbb S^2_+)}<\epsilon\quad\forall\,t\in\mathbb R.\]
\end{theorem}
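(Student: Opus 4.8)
The plan is a Lyapunov-type stability argument resting on the variational characterization of $\mathcal M$ and on the same compactness used to solve $(V)$. I would argue by contradiction: if $\mathcal M$ were not stable, there would exist $\epsilon_0>0$, a sequence of sufficiently smooth solutions $\omega_n(t,\mathbf x)$ of \eqref{eun0}, and times $t_n\in\mathbb R$, such that $\inf_{v\in\mathcal M}\|\omega_n(0,\cdot)-v\|_{L^p(\mathbb S^2_+)}\to 0$ while $\inf_{v\in\mathcal M}\|\omega_n(t_n,\cdot)-v\|_{L^p(\mathbb S^2_+)}\ge\epsilon_0$ for every $n$; pick $v_n\in\mathcal M$ with $\|\omega_n(0,\cdot)-v_n\|_{L^p}\to0$. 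First I would record the analytic properties of the functionals in $(V)$. The linear functional $I$ is bounded on $L^p(\mathbb S^2_+)$, hence strongly and weakly sequentially continuous. By elliptic regularity (Lemma 3.1 of \cite{CWZ23}) and the Sobolev embedding $W^{2,p}(\mathbb S^2_+)\hookrightarrow C^0(\overline{\mathbb S^2_+})$, valid for every $p>1$ (\cite{Au,Hebey}), the Green operator $\mathcal G_+:L^p(\mathbb S^2_+)\to C^0(\overline{\mathbb S^2_+})$ is compact; consequently $E(v)=\frac12\int_{\mathbb S^2_+}v\,\mathcal G_+v\,d\sigma$ is strongly continuous and weakly sequentially continuous on $L^p$, and, being a positive definite quadratic form (since $-\Delta\mathcal G_+v=v$), it is strictly convex. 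Thus $\mathcal E=E-\lambda I$ is continuous on $L^p$, weakly sequentially continuous, and strictly convex. Since $\mathcal E$ is conserved along \eqref{eun0} by \eqref{cl01}--\eqref{cl02}, and $\omega_n(0,\cdot)-v_n\to0$ with $\mathcal E(v_n)=M$, we get $\mathcal E(\omega_n(t_n,\cdot))=\mathcal E(\omega_n(0,\cdot))\to M$; moreover $\|\omega_n(t_n,\cdot)\|_{L^p}=\|\omega_n(0,\cdot)\|_{L^p}$ by \eqref{cl03}, which converges to the common $L^p$-norm $C$ of the members of $\mathcal R$.

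Next I would replace $\omega_n(t_n,\cdot)$ --- which by \eqref{cl03} lies in the \emph{moving} rearrangement class $\mathcal R_{\omega_n(0,\cdot)}$, not in the fixed class $\mathcal R$ --- by a genuine maximizing sequence for $(V)$ inside $\mathcal R$. Let $\rho_0$ denote the common decreasing rearrangement (on the interval $(0,|\mathbb S^2_+|)$) of the elements of $\mathcal R$. Since the decreasing rearrangement is an $L^p$-contraction, $\|\omega_n(0,\cdot)^*-\rho_0\|_{L^p}=\|\omega_n(0,\cdot)^*-v_n^*\|_{L^p}\le\|\omega_n(0,\cdot)-v_n\|_{L^p}\to0$. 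By a classical representation theorem for rearrangements (Ryff; see \cite{BMA}) there is a measure-preserving map $\tau_n:\mathbb S^2_+\to(0,|\mathbb S^2_+|)$ with $\omega_n(t_n,\cdot)=\omega_n(0,\cdot)^*\circ\tau_n$ a.e.; setting $w_n:=\rho_0\circ\tau_n$ gives $w_n\in\mathcal R$ and $\|\omega_n(t_n,\cdot)-w_n\|_{L^p}=\|\omega_n(0,\cdot)^*-\rho_0\|_{L^p}\to0$. By continuity of $\mathcal E$ on bounded sets, $\mathcal E(w_n)\to M$, so $(w_n)\subset\mathcal R$ is a maximizing sequence for $(V)$.

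Finally, since $\mathcal R$ is bounded in the reflexive space $L^p$ ($1<p<\infty$), some subsequence satisfies $w_n\rightharpoonup w$ weakly in $L^p$, and by Mazur's theorem $w$ lies in the closed convex hull $\overline{\mathrm{conv}}\,\mathcal R$, which is weakly compact. Weak sequential continuity of $\mathcal E$ gives $\mathcal E(w)=M$. By Burton's rearrangement theory the extreme points of $\overline{\mathrm{conv}}\,\mathcal R$ all belong to $\mathcal R$; since the convex, weakly upper semicontinuous functional $\mathcal E$ attains its maximum over the weakly compact convex set $\overline{\mathrm{conv}}\,\mathcal R$ at an extreme point, $\sup_{\overline{\mathrm{conv}}\,\mathcal R}\mathcal E=M$, and the strict convexity of $\mathcal E$ forces every maximizer over $\overline{\mathrm{conv}}\,\mathcal R$ --- in particular $w$ --- to be an extreme point. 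Hence $w\in\mathcal R$, and since $\mathcal E(w)=M$ we conclude $w\in\mathcal M$. As $w\in\mathcal R$ we have $\|w\|_{L^p}=C=\lim_n\|w_n\|_{L^p}$, so the Radon--Riesz property of $L^p$ upgrades the weak convergence $w_n\rightharpoonup w$ to strong convergence $w_n\to w$. Then $\|\omega_n(t_n,\cdot)-w\|_{L^p}\le\|\omega_n(t_n,\cdot)-w_n\|_{L^p}+\|w_n-w\|_{L^p}\to0$, contradicting $\inf_{v\in\mathcal M}\|\omega_n(t_n,\cdot)-v\|_{L^p}\ge\epsilon_0$ and finishing the proof. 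I expect the main obstacle to be controlling $\mathcal E$ on the closed convex hull of $\mathcal R$: one must exclude $\mathcal E$ reaching the value $M$ on $\overline{\mathrm{conv}}\,\mathcal R\setminus\mathcal R$, which is exactly where the strict convexity of the energy $E$ and the extreme-point characterization of rearrangement classes become decisive; once the weak limit $w$ is known to lie in $\mathcal R$, the passage to strong convergence is automatic because the $L^p$-norm is constant along rearrangement classes.
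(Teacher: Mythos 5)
Your proof is correct and follows the same overall architecture as the paper's: argue by contradiction, use the conservation laws \eqref{cl01}--\eqref{cl03} to see that $\omega_n(t_n,\cdot)$ is an ``almost maximizer'' living in the moving class $\mathcal R_{\omega_n(0,\cdot)}$ rather than in $\mathcal R$, replace it by a nearby genuine maximizing sequence inside $\mathcal R$, and conclude by compactness of maximizing sequences (the paper's Proposition \ref{propcom}). You deviate in two sub-steps. For the replacement step the paper invokes Lemma \ref{lem203} (Burton's nearest-point result between two rearrangement classes) and then bounds the minimal distance by $\|\eta_n(0)-\hat v\|_{L^p}$; you instead construct the approximant explicitly via Ryff's representation $\omega_n(t_n,\cdot)=\omega_n(0,\cdot)^*\circ\tau_n$ combined with the $L^p$-contractivity of the decreasing rearrangement, which amounts to a self-contained proof of the special case of Lemma \ref{lem203} that is needed. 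For the compactness step, the paper shows the weak limit belongs to $\mathcal R$ by linearizing $\mathcal E$ at the limit and using Lemma \ref{lem201} together with uniqueness of the linear maximizer (inequality \eqref{lleq}); you instead combine Bauer's maximum principle, the strict convexity of $\mathcal E$ coming from the positive-definiteness of $\mathcal G_+$ (Lemma \ref{greo}(iv)), and Burton's theorem that the extreme points of $\overline{\mathrm{conv}}\,\mathcal R=\overline{\mathcal R^w}$ are precisely the elements of $\mathcal R$. Both routes rest on the same underlying inputs (positive-definiteness of the Green operator and Burton's structure theory for $\overline{\mathcal R^w}$) and are interchangeable: the linearization argument avoids the extreme-point characterization altogether, while yours makes the role of strict convexity more transparent. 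The final upgrade from weak to strong convergence, via constancy of the $L^p$-norm on $\mathcal R$ and uniform convexity of $L^p$, is identical in the two proofs.
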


The proof of Theorem \ref{thmos} is based on a compactness argument which was first used by Burton \cite{BAR}. In this process, Burton's rearrangement theory \cite{BMA} and conservation laws \eqref{cl01}-\eqref{cl03} of the Euler equation play an essential role.

\begin{remark}
The stability  in Theorem \ref{thmos} still holds if the perturbed solution $\omega(t,\mathbf x)$  belongs to $L^p(\mathbb S^2_+)$ for any $t\in\mathbb R$ and  satisfies \eqref{cl01}-\eqref{cl03}. See Section \ref{sec4}.
\end{remark}

\begin{remark}
 Since both $E$ and $I$ are invariant under rotations around the polar axis, i.e.,
\[E(v)=E(v\circ\mathsf R^{\mathbf e_3}_\alpha),\quad I(v)=I(v\circ\mathsf R^{\mathbf e_3}_\alpha) \quad\forall\,\alpha\in\mathbb R,\]
we deduce that the set $\mathcal M$ is closed with respect to rotations around the polar axis, i.e.,
\[v\in\mathcal M\quad\Longrightarrow \quad v\circ\mathsf R^{\mathbf e_3}_\alpha\in\mathcal M\quad\forall\,\alpha\in \mathbb R.\]
However, more details about the structure of $\mathcal M$ remain unknown.
We conjecture that, at least for some special rearrangement classes,  the maximizer of $(V)$ is unique up to rotations around the polar axis, i.e., there exists some $\hat v\in\mathcal M$ such that 
\[\mathcal M=\left\{\hat v\circ\mathsf R^{\mathbf e_3}_\alpha\mid \alpha\in\mathbb R\right\}.\]
\end{remark}

\begin{remark}
By now, it is not clear whether the elements in $\mathcal M$ are zonal (i.e., rotationally invariant under rotations around the polar axis).  If this happens, then the rotating solutions we have obtained are actually trivial. In the following theorem, we will show that  for some special rearrangement classes and for some special values of $\lambda$, any maximizer cannot be zonal.
\end{remark}

To desingularize the vortex pair \eqref{2vm}, we choose  a family of appropriate  rearrangement classes in $(V)$ and study  the asymptotic behavior of the maximizers. 
For $0<\varepsilon\leq \pi/2$, consider a family of  functions $\{\varrho_\varepsilon\}\subset L^p(\mathbb S^2_+)$  satisfying the following three hypotheses:
 \begin{itemize}
   \item [(H1)]  For every  $\varepsilon\in(0, {\pi}/{2}],$ $\varrho_\varepsilon$ is nonnegative, and
       \[   \varrho_\varepsilon(\mathbf x)=0\quad\mbox{a.e. }  \,\mathbf x\in\mathbb S^2_+\setminus B_\varepsilon(N).\]
   \item [(H2)]There exists some $\kappa>0$ such that
   \[\int_{\mathbb S^2_+}\varrho_\varepsilon d\sigma=\kappa \quad \forall\,\varepsilon\in(0, {\pi}/{2}].\] 
   \item[(H3)]There exists some $K>0$ such that
   \[\varepsilon^{2/p'}\|\varrho_{\varepsilon}\|_{L^p(\mathbb S^2_+)}\leq K\quad \forall\,\varepsilon\in(0, {\pi}/{2}].\]
 \end{itemize}
As a simple example, we can take  $\varrho_\varepsilon$ as the following patch function:
 \begin{equation}\label{vpss}
 \varrho_{\varepsilon}(\mathbf x)=\frac{\kappa}{2\pi(1-\cos \varepsilon)}\chi_{B_\varepsilon(N)},
 \end{equation}
 where $\chi_{B_\varepsilon(N)}$ is the characteristic function of $B_\varepsilon(N)$  (cf. \eqref{chi1}).
Note that such $\varrho_\varepsilon$ satisfies (H2) by \eqref{sagd}, and satisfies (H3) by H\"older's inequality.

Our third result is as follows.

\begin{theorem}[Limiting behavior]\label{thmlb}
Let $1<p<\infty$ and $0<\varepsilon\leq \pi/2$ be fixed. Suppose that $\{\varrho_\varepsilon\}\subset L^p(\mathbb S^2_+)$  satisfies (H1)-(H3), where $\kappa,K>0$ are fixed.
Let $\mathcal R_\varepsilon$ be the rearrangement class of $\varrho_\varepsilon$.  
For fixed $\lambda>0$, consider the following maximization problem:
  \[M_\varepsilon :=\sup_{v\in\mathcal R_\varepsilon}\mathcal E(v),\quad \mathcal E:=E-\lambda I.\tag{$V_\varepsilon$}\]
 Denote by $\mathcal M_\varepsilon $ the set of maximizers of $(V_\varepsilon)$  (which is non-empty by Theorem \ref{thme}).  Then 
 
 \begin{itemize}
 \item [(i)]   For any $v\in\mathcal M_\varepsilon$, there exists some open set $O_v\subset\mathbb S^2_+$, called the vortex core related to $v$, such that 
     \begin{equation}\label{vc}
       v(\mathbf x)
   >0 \,\,\,\, \mbox{a.e.}\,\,\,\, \mathbf x\in O_v,\,\,\,\,  v(\mathbf x)
   =0 \,\,\,\, \mbox{a.e.}\,\,\,\, \mathbf x\in \mathbb S^2_+\setminus O_v.
     \end{equation}
 \item [(ii)]  There exists some $C>0$,  not depending on $\varepsilon$, such that
\[O_v\subset  B_{C\varepsilon}(\mathbf X_v)\quad\forall\,v\in\mathcal M_\varepsilon,\]
where $\mathbf X_v$ is the mass center of $v$, 
\begin{equation}\label{doms}
\mathbf X_v:=\frac{1}{\kappa}\int_{\mathbb S^2_+}\mathbf xv d\sigma.
\end{equation}
\item[(iii)]  
For arbitrarily chosen $ v_\varepsilon\in\mathcal M_\varepsilon$, suppose up to a subsequence that
\[\lim_{\varepsilon\to 0}\mathbf X_{v_\varepsilon}= \hat{\mathbf x}.\]
 Then the third component $\hat x_3$ of $\hat{\mathbf x}$ satisfies
  \begin{equation}\label{llx3}
  \hat x_3=\min\left\{1, \frac{\kappa}{4\pi\lambda}\right\}.
 \end{equation}
  \end{itemize}
\end{theorem}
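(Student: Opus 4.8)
The plan is to analyze the maximizers $v_\varepsilon$ of $(V_\varepsilon)$ by comparing the maximal value $M_\varepsilon$ with an explicit test function built from $\varrho_\varepsilon$, and then extracting quantitative concentration information. First I would establish (i): by Theorem \ref{thme}(ii) each maximizer satisfies $v=\phi_v(\mathcal G_+v-\lambda x_3)$ for an increasing $\phi_v$, and since $v\in\mathcal R_\varepsilon$ is nonnegative with $\int v=\kappa$, the set $O_v:=\{\mathbf x\in\mathbb S^2_+\mid v(\mathbf x)>0\}$ has positive measure; elliptic regularity (as cited after \eqref{wsf1}) makes $\mathcal G_+v-\lambda x_3$ continuous, hence $O_v$ is (equal a.e. to) an open set on which $v>0$.

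The heart is the energy estimate leading to (ii). I would take as a competitor the symmetric decreasing rearrangement of $\varrho_\varepsilon$ about $N$, or simply $\varrho_\varepsilon$ itself, to get a lower bound $M_\varepsilon\geq \mathcal E(\varrho_\varepsilon)\geq \frac{\kappa^2}{4\pi}\ln\frac1\varepsilon -C$, using the logarithmic kernel \eqref{gfs2}, hypothesis (H2) for the leading self-energy term, and (H3) to control the error. For the upper bound on a genuine maximizer $v_\varepsilon$, I would split $E(v_\varepsilon)=\frac12\int v_\varepsilon\mathcal G_+v_\varepsilon$ using \eqref{gfs2}: the "$\ln|\mathbf x-\mathbf y'|$" part and the $-\lambda I$ part are bounded uniformly, while the singular part $-\frac{1}{2\pi}\int\int\ln|\mathbf x-\mathbf y|\,v_\varepsilon v_\varepsilon$ must be estimated. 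Here the standard trick (as in the planar counter-rotating vortex literature \cite{W241,W242}) is: on the one hand $-\frac{1}{2\pi}\ln|\mathbf x-\mathbf y|\leq -\frac{1}{2\pi}\ln(\text{diam }O_{v_\varepsilon})+C$ away from the diagonal; on the other hand, bounding the near-diagonal contribution via the rearrangement inequality against the ball $B_r(N)$ with $|B_r(N)|=|O_{v_\varepsilon}|$, using (H3) and Hölder. Comparing with the lower bound forces $\text{diam }O_{v_\varepsilon}\leq C\varepsilon$, which combined with the characterization of $O_{v_\varepsilon}$ as a superlevel set of the continuous function $\mathcal G_+v_\varepsilon-\lambda x_3$ yields $O_{v_\varepsilon}\subset B_{C\varepsilon}(\mathbf X_{v_\varepsilon})$.

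For (iii) I would pass to the limit. Normalizing, write $\mathcal G_+v_\varepsilon(\mathbf x)=-\frac{1}{2\pi}\int_{O_{v_\varepsilon}}\ln|\mathbf x-\mathbf y|v_\varepsilon\,d\sigma+\frac{1}{2\pi}\int\ln|\mathbf x-\mathbf y'|v_\varepsilon\,d\sigma$; by (ii) and $\int v_\varepsilon=\kappa$ the second integral converges to $-\kappa G(\hat{\mathbf x},\hat{\mathbf x}')\cdot(-2\pi)$-type quantity, i.e. to $\frac{\kappa}{2\pi}\ln|\hat{\mathbf x}-\hat{\mathbf x}'|=\frac{\kappa}{2\pi}\ln(2\hat x_3)$ when $\hat x_3>0$. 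The key mechanism is that, after subtracting the (divergent but $\mathbf x$-independent up to $o(1)$) self-interaction constant, the "renormalized" functional $\mathcal E(v_\varepsilon)-\frac{\kappa^2}{4\pi}\ln\frac1\varepsilon$ is, to leading order, $\frac{\kappa^2}{4\pi}\ln(2\hat x_3)-\lambda\kappa\hat x_3+o(1)$ evaluated at the limiting center; maximality of $v_\varepsilon$ forces $\hat x_3$ to maximize $W(s):=\frac{\kappa^2}{4\pi}\ln(2s)-\lambda\kappa s$ over $s\in(0,1]$ (the constraint $s\le 1$ coming from $\mathbf X_{v_\varepsilon}$ being close to $\mathbb S^2_+$, with $|\mathbf X_{v_\varepsilon}|\to 1$ by concentration). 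Solving $W'(s)=\frac{\kappa^2}{4\pi s}-\lambda\kappa=0$ gives the interior critical point $s=\frac{\kappa}{4\pi\lambda}$, and since $W$ is concave the maximizer over $(0,1]$ is $\min\{1,\frac{\kappa}{4\pi\lambda}\}$, which is \eqref{llx3}. To make this rigorous I would (a) show $\limsup_\varepsilon(M_\varepsilon-\frac{\kappa^2}{4\pi}\ln\frac1\varepsilon)\leq \sup_{s\in(0,1]}W(s)$ by the upper energy bound, and (b) match it with a lower bound obtained by plugging in $\varrho_\varepsilon$ translated (rotated) so its center sits at latitude $\arcsin(\min\{1,\kappa/4\pi\lambda\})$.

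The main obstacle I anticipate is the precise two-sided control of the singular self-energy term $-\frac{1}{2\pi}\int\int\ln|\mathbf x-\mathbf y|v_\varepsilon(\mathbf x)v_\varepsilon(\mathbf y)\,d\sigma\,d\sigma$ in terms of $\varepsilon$ and $\operatorname{diam}O_{v_\varepsilon}$ with errors that are genuinely $o(1)$ (not just $O(1)$) after the $\frac{\kappa^2}{4\pi}\ln\frac1\varepsilon$ subtraction — this is what couples the diameter bound (ii) to the sharp constant in (iii), and requires careful use of (H3) together with the rearrangement inequality comparing $v_\varepsilon$ to the characteristic function of a geodesic ball of the same area. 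The geometry of $\mathbb S^2$ enters only mildly, through \eqref{gds} and \eqref{sagd}, so the estimates should parallel the Euclidean case up to controlled lower-order corrections.
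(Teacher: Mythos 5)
Your overall framework (Turkington-style energy analysis) is the right one, and your treatment of part (i) is close to the paper's, but it glosses over one point the paper must address: the set $\{v>0\}$ is only shown to agree a.e.\ with the open superlevel set $\{\mathcal G_+v-\lambda x_3>\mu_v\}$ after proving that the level set $\{\mathcal G_+v-\lambda x_3=\mu_v\}$ has measure zero. The paper does this (Lemma \ref{sce01}) by noting that $-\Delta(\mathcal G_+v-\lambda x_3)=0$ a.e.\ on that level set, which forces $v=2\lambda x_3$ there while also $v=\phi_v(\mu_v)$ is constant there, so the level set lies in a latitude circle. This step is absent from your sketch.

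The more serious gap is in part (ii). Your plan is to bound $\operatorname{diam}O_{v_\varepsilon}$ by matching an upper and a lower bound on the energy, but a pure energy comparison cannot control the \emph{full support}: a component of $O_{v_\varepsilon}$ of tiny measure placed far from the main blob perturbs $\mathcal E(v_\varepsilon)$ by $o(1)$ while making the diameter $O(1)$, so no contradiction with the lower bound $M_\varepsilon\ge\frac{\kappa^2}{4\pi}\ln\frac1\varepsilon-C$ can arise. (Your intermediate inequality $-\ln|\mathbf x-\mathbf y|\le-\ln(\operatorname{diam}O_{v_\varepsilon})+C$ also points the wrong way, since $|\mathbf x-\mathbf y|\le\operatorname{diam}$ on the support.) The missing ingredient is a quantitative lower bound on the Lagrange multiplier, $\mu_{v_\varepsilon}>-\frac{\kappa}{2\pi}\ln\varepsilon-C$ (Proposition \ref{lvlm}), which the paper derives from the energy lower bound together with the upper bound $\int v(\mathcal G_+v-\lambda x_3-\mu_v-\lambda)\,d\sigma\le C$ (Lemma \ref{ubkv}, resting on the gradient estimate of Lemma \ref{tcl} and hypothesis (H3)). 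Only then does the pointwise inequality $\mathcal G_+v(\mathbf x)-\lambda x_3>\mu_v$ at \emph{every} $\mathbf x\in O_v$, split at scale $R\varepsilon$, force at least $\tfrac23\kappa$ of the mass into each ball $B_{R\varepsilon}$ around each support point, whence $\operatorname{diam}O_v\le C\varepsilon$. For part (iii), your route via sharp asymptotics of $M_\varepsilon-\frac{\kappa^2}{4\pi}\ln\frac1\varepsilon$ runs into exactly the obstacle you flag — $o(1)$ control of the renormalized self-energy, for which the Riesz rearrangement inequality on $\mathbb S^2$ is unavailable (the paper itself notes this). The paper avoids the issue entirely: it compares $v_\varepsilon$ with $v_\varepsilon\circ\mathsf R_\varepsilon$ for $\mathsf R_\varepsilon\in\mathbf{SO}(3)$, under which the singular term $\iint\ln|\mathbf x-\mathbf y|\,v_\varepsilon v_\varepsilon$ is \emph{exactly} invariant and cancels from the inequality $\mathcal E(v_\varepsilon)\ge\mathcal E(v_\varepsilon\circ\mathsf R_\varepsilon)$; only the regular interaction $\iint\ln|\mathbf x-\mathbf y'|$ and the impulse survive, and these pass to the limit by (ii), yielding that $\hat x_3$ maximizes $W(s)=\frac{\kappa}{4\pi}\ln s-\lambda s$ on $(0,1]$. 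You should adopt this rotation-invariance cancellation; without it, part (iii) as you outline it is not closable with the stated tools.
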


 Note that  \eqref{llx3}  is exactly \eqref{Are} when $\kappa/(4\pi\lambda)\leq  1$.
According to Theorem \ref{thmlb}, when $\kappa/(4\pi\lambda)< 1$, the limiting location  of the vortex cores cannot be the north pole $N$, thus  any  $v\in\mathcal M_\varepsilon$ must be non-zonal if $\varepsilon$ is sufficiently small. Hence Theorems \ref{thme}, \ref{thmos} and \ref{thmlb} together confirm the existence of a large class of nontrivial, stable rotating solutions of the Euler equation with sharply concentrated vorticity. In particular, if we take $\varrho_\varepsilon$ as in \eqref{vpss}, then we get a family of  nontrivial, stable, concentrated rotating vortex patches.

The proof of Theorem \ref{thmlb} relies on an energy analysis method established by Turkington \cite{T83}. See also \cite{EM91,EM94}. Since the sphere has very different geometry and topology from planar domains, new challenges arise in the details of the proof. A fortunate fact is that  the Green  function of the sphere has the same 
  singularity as those of planar domains (both exhibiting logarithmic singularity), which allows us to obtain  some key energy estimates as in the planar case.
The detailed proof is provided in Section \ref{sec5}.

Finally, we note that the limiting profile of the maximizers remains unclear. We conjecture that, after a suitable scaling, the limit of the maximizers is a radially decreasing function with respect to some point, as in the planar case \cite{W241,W242}. However, we are unable to prove this. The main difficulties are that  scaling a function on the sphere is more complicated, and the Riesz rearrangement inequality on the sphere is also unknown.
 

This paper is organized as follows. In Section \ref{sec2}, we present some preliminary materials that will be used in subsequent sections.  
Sections \ref{sec3}-\ref{sec5} are devoted to  the proofs of Theorem \ref{thme}, \ref{thmos} and \ref{thmlb}, respectively. In Section \ref{sec6}, we briefly discuss the case of a rotating sphere.

\section{Preliminaries}\label{sec2}

From now on, let $1<p<\infty$ be fixed, and denote by $p'=p/(p-1)$ the H\"older conjugate of $p$.
We begin with some basic properties of the Green operator $\mathcal G_+$ on $\mathbb S^2_+$ that will be frequently used later.
\begin{lemma}[\cite{CWZ23}, Section 3]\label{greo}
Let $\mathcal G_+$ be the Green operator of $-\Delta$ on $\mathbb S_+^2$ with zero boundary condition (cf. \eqref{ine03} in Section \ref{sec1}). Then
\begin{itemize}
\item[(i)] $\mathcal G_+$ is a bounded linear operator from $L^p(\mathbb S_+^2)$ onto $W^{2,p}(\mathbb S_+^2)$;
   \item[(ii)]  $\mathcal G_+$ is  a compact linear operator from $L^p(\mathbb S_+^2)$ into $L^r(\mathbb S_+^2)$ for any $1\leq r\leq \infty$;
    \item[(iii)]  $\mathcal G_+$ is symmetric, i.e.,
        \[
        \int_{\mathbb S_+^2}v\mathcal G_+wd\sigma=\int_{\mathbb S^2}w\mathcal G_+vd\sigma\quad\forall\, v,w\in  L^p(\mathbb S_+^2);
        \]
        \item[(iv)]   $\mathcal G_+$ is  positive-definite, i.e.,
     \[
          \int_{\mathbb S^2}v\mathcal G_+vd\sigma\geq 0  \quad\forall\,v\in   L^p(\mathbb S_+^2),
        \]
        and the equality  holds if and only if $v\equiv 0.$
\end{itemize}
\end{lemma}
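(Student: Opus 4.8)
The plan is to regard $\mathbb S^2_+$ as a smooth, relatively compact Riemannian surface with smooth boundary $\partial\mathbb S^2_+$ (the equator), so that the standard $L^p$ theory for the Dirichlet problem $-\Delta u=v$ in $\mathbb S^2_+$, $u|_{\partial\mathbb S^2_+}=0$, applies after passing to local charts. For (i), observe that $u:=\mathcal G_+v$ is by construction the unique solution of this problem; the Calder\'on--Zygmund estimate $\|u\|_{W^{2,p}(\mathbb S^2_+)}\le C\|v\|_{L^p(\mathbb S^2_+)}$, valid for $1<p<\infty$ on bounded smooth domains and transported to $\mathbb S^2_+$ via a finite atlas and a partition of unity, gives boundedness of $\mathcal G_+\colon L^p(\mathbb S^2_+)\to W^{2,p}(\mathbb S^2_+)$. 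Surjectivity onto the natural range $W^{2,p}\cap W^{1,p}_0(\mathbb S^2_+)$ is immediate: for any such $w$ one has $-\Delta w\in L^p(\mathbb S^2_+)$ and, by uniqueness, $\mathcal G_+(-\Delta w)=w$.

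For (ii), I combine (i) with a compact embedding. Since $\dim\mathbb S^2_+=2$ and $p>1$, we have $2-2/p>0$, hence $W^{2,p}(\mathbb S^2_+)\hookrightarrow C^{0,\beta}(\overline{\mathbb S^2_+})$ for some $\beta\in(0,1]$, and $C^{0,\beta}(\overline{\mathbb S^2_+})\hookrightarrow C^0(\overline{\mathbb S^2_+})$ is compact by the Arzel\`a--Ascoli theorem. Thus $W^{2,p}(\mathbb S^2_+)$ embeds compactly into $L^\infty(\mathbb S^2_+)$, so a bounded sequence in $W^{2,p}$ admits a subsequence converging uniformly, hence in every $L^r(\mathbb S^2_+)$, $1\le r\le\infty$. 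Composing with the bounded operator $\mathcal G_+\colon L^p\to W^{2,p}$ from (i) yields the claimed compactness.

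For (iii) and (iv) I argue directly. Symmetry of $\mathcal G_+$ reduces to symmetry of the kernel: from \eqref{gfs2} and \eqref{rep1},
\[
|\mathbf x-\mathbf y'|^2=(x_1-y_1)^2+(x_2-y_2)^2+(x_3+y_3)^2=|\mathbf x'-\mathbf y|^2,
\]
so $G_+(\mathbf x,\mathbf y)=G_+(\mathbf y,\mathbf x)$; since $G_+$ has only a logarithmic singularity on the diagonal, $G_+(\mathbf x,\cdot)\in L^q(\mathbb S^2_+)$ for every $q<\infty$, and for $v,w\in L^p(\mathbb S^2_+)$ the map $(\mathbf x,\mathbf y)\mapsto G_+(\mathbf x,\mathbf y)v(\mathbf y)w(\mathbf x)$ lies in $L^1(\mathbb S^2_+\times\mathbb S^2_+)$, so Fubini gives (iii). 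For (iv), set $u=\mathcal G_+v$; by Green's first identity (valid since $u\in W^{2,p}\cap W^{1,p}_0\hookrightarrow W^{1,q}$ for all $q<\infty$ and $u$ has zero trace, so the boundary term vanishes),
\[
\int_{\mathbb S^2_+}v\,\mathcal G_+v\,d\sigma=\int_{\mathbb S^2_+}(-\Delta u)\,u\,d\sigma=\int_{\mathbb S^2_+}|\nabla u|^2\,d\sigma\ge 0,
\]
where $\nabla u\in L^2(\mathbb S^2_+)$ because $W^{1,p}(\mathbb S^2_+)\hookrightarrow L^2(\mathbb S^2_+)$ in dimension two for every $p>1$. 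Equality forces $\nabla u\equiv 0$, hence $u$ is constant; the zero boundary condition then gives $u\equiv 0$, and therefore $v=-\Delta u\equiv 0$.

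The only genuinely delicate point is localized at the equator: one must check that the flat Calder\'on--Zygmund estimate, the Sobolev embedding $W^{2,p}\hookrightarrow C^{0,\beta}$, and the trace/Green's identity machinery for $W^{2,p}\cap W^{1,p}_0$ transfer faithfully to $\mathbb S^2_+$, which is routine given the smoothness of $\partial\mathbb S^2_+$ but is where all the care resides; granting these, assertions (i)--(iv) are standard elliptic facts, and a fully detailed treatment is given in \cite{CWZ23}.
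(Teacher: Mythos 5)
Your proposal is correct. Note, however, that the paper does not prove this lemma at all: it is imported verbatim from \cite{CWZ23} (Section 3), so there is no in-paper argument to compare against. Your write-up supplies exactly the standard elliptic-theory proof one would expect to find in that reference: Calder\'on--Zygmund/Agmon--Douglis--Nirenberg estimates in a finite atlas for (i); the compact embedding $W^{2,p}(\mathbb S^2_+)\hookrightarrow C^{0,\beta}\hookrightarrow\hookrightarrow C^0$ (valid since $2-2/p>0$ for $p>1$ in dimension two) for (ii); symmetry of the kernel via $|\mathbf x-\mathbf y'|=|\mathbf x'-\mathbf y|$ plus Fubini for (iii); and Green's identity with vanishing trace for (iv). All four steps are sound, including the integrability checks (the uniform $L^{q}$ bound on $G_+(\mathbf x,\cdot)$ for the Fubini step, and $\nabla u\in L^2$ from $W^{1,p}\hookrightarrow L^2$ in two dimensions). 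You also correctly flag the one imprecision in the statement itself: the range of $\mathcal G_+$ is $W^{2,p}\cap W^{1,p}_0(\mathbb S^2_+)$ rather than all of $W^{2,p}(\mathbb S^2_+)$, so ``onto $W^{2,p}$'' should be read as ``into $W^{2,p}$'' (or onto the subspace with zero trace). I see no gap.
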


The following technical lemma will be used in proving Theorem \ref{thme}(iv).
\begin{lemma}[\cite{CW20}, Lemma 2.1]\label{lem200}
Suppose that $\phi:\mathbb R\to\mathbb R\cup\{\pm\infty\}$ is a monotone function. Then there exists  a sequence of bounded, smooth functions $\{\phi_n\}_{n=1}^\infty$ such that
\[|\phi_n(s)|\leq |\phi(s)|\quad \forall\,s\in \mathbb R,\,n=1,2,\cdot\cdot\cdot,\]
\[\lim_{n\to\infty}\phi_n(s)=\phi(s)\quad \mbox{whenever }\phi \mbox{ is continuous at }s.\]

\end{lemma}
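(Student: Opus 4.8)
The plan is to build $\phi_n$ by truncating $\phi$ and then mollifying with carefully chosen \emph{one-sided} kernels. Since the statement is insensitive to replacing $\phi$ by $-\phi$ (one just negates the whole approximating sequence), I may assume $\phi$ is non-decreasing. First I would split off signs: set $\phi^+:=\max\{\phi,0\}$ and $\phi^-:=\max\{-\phi,0\}$, so that $\phi=\phi^+-\phi^-$, $|\phi|=\phi^++\phi^-$, and $\phi^+(s)\phi^-(s)=0$ for every $s$; here $\phi^+$ is non-decreasing and $\phi^-$ is non-increasing, both non-negative and Borel measurable. Then I truncate: $\phi_n^+:=\min\{\phi^+,n\}$ and $\phi_n^-:=\min\{\phi^-,n\}$, which are bounded by $n$, preserve the respective monotonicity, and converge pointwise to $\phi^+$ and $\phi^-$ everywhere.

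Next I would fix a mollifier $j\in C_c^\infty(\mathbb R)$ with $j\ge0$, $\int_{\mathbb R}j=1$, and $\mathrm{supp}\,j\subset[0,1]$; put $j_\delta(u):=\delta^{-1}j(u/\delta)$, supported in $[0,\delta]$, and $\widehat j_\delta(u):=j_\delta(-u)$, supported in $[-\delta,0]$. Then define
\[
\phi_n:=\bigl(\phi_n^+*j_{1/n}\bigr)-\bigl(\phi_n^-*\widehat j_{1/n}\bigr).
\]
Each convolution is $C^\infty$ (a bounded measurable function against a smooth compactly supported kernel) and bounded by $n$, so $\phi_n$ is smooth and bounded. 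The point of the one-sidedness is the pointwise bound: for any $s$,
\[
\bigl(\phi_n^+*j_{1/n}\bigr)(s)=\int_0^{1/n}\phi_n^+(s-u)\,j_{1/n}(u)\,du
\]
is a weighted average of $\phi_n^+$ over $[s-1/n,s]$, hence lies in $[0,\phi_n^+(s)]\subset[0,\phi^+(s)]$ since $\phi_n^+$ is non-decreasing; symmetrically $\bigl(\phi_n^-*\widehat j_{1/n}\bigr)(s)$ averages $\phi_n^-$ over $[s,s+1/n]$ and lies in $[0,\phi^-(s)]$ since $\phi_n^-$ is non-increasing. Because $\phi^+(s)\phi^-(s)=0$, at least one of the two terms in $\phi_n(s)$ vanishes, and a short case analysis on the sign of $\phi(s)$ (covering the extended values $\pm\infty$ as well) gives $|\phi_n(s)|\le\max\{\phi^+(s),\phi^-(s)\}=|\phi(s)|$.

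For convergence at a continuity point $s$ of $\phi$: then $\phi^+$ and $\phi^-$ are continuous at $s$, and if $\phi^+(s)$ is finite the truncation is inactive near $s$ for large $n$, so $\bigl(\phi_n^+*j_{1/n}\bigr)(s)$ is an average of $\phi^+$ over the shrinking interval $[s-1/n,s]$ and tends to $\phi^+(s)$ (the case $\phi^+(s)=+\infty$ is immediate, using $\phi^+(t)\uparrow+\infty$ as $t\uparrow s$, so the averaged truncated values exceed any prescribed level once $n$ is large); likewise $\bigl(\phi_n^-*\widehat j_{1/n}\bigr)(s)\to\phi^-(s)$. Hence $\phi_n(s)\to\phi^+(s)-\phi^-(s)=\phi(s)$, and for non-increasing $\phi$ one applies the above to $-\phi$ and negates.

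The only genuinely delicate point — the hard part — is securing the pointwise inequality $|\phi_n(s)|\le|\phi(s)|$: ordinary symmetric mollification of a monotone function can overshoot $|\phi(s)|$ at a jump, which is why one is forced both to separate the positive and negative parts (so that cancellation cannot mask an overshoot) and to mollify with one-sided kernels pointing against the direction of monotonicity. With the kernels arranged this way, smoothness, boundedness, and convergence at continuity points are all routine, the only residual care being the bookkeeping for the extended-real values of $\phi$.
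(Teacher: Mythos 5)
Your construction is correct. The paper itself does not prove this lemma --- it is imported verbatim from \cite{CW20} (Lemma 2.1) --- so there is no in-text argument to compare against, but your proof stands on its own. The two ingredients you identify as essential really are the crux: splitting $\phi=\phi^{+}-\phi^{-}$ so that no cancellation can hide an overshoot, and mollifying each monotone piece with a kernel supported on the side where that piece takes its \emph{smaller} values (over $[s-1/n,s]$ for the non-decreasing $\phi^{+}$, over $[s,s+1/n]$ for the non-increasing $\phi^{-}$), which forces each convolution to lie in $[0,\phi^{\pm}(s)]$; combined with $\phi^{+}(s)\phi^{-}(s)=0$ this gives $|\phi_n(s)|\le|\phi(s)|$ pointwise, including at the extended values. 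The convergence argument at continuity points is also sound: monotonicity makes the truncation inactive on the relevant one-sided interval once $n$ exceeds the finite value $\phi^{\pm}(s)$, and the one-sided averages then converge by one-sided continuity, with the $\pm\infty$ cases handled as you indicate. The only cosmetic remark is that each $\phi_n$ is bounded by $n$ rather than uniformly in $n$, but the lemma only asserts that each $\phi_n$ is bounded, so nothing is lost.
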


 The following three lemmas  concerning variational properties of rearrangement classes  will be used in proving both existence and stability.

\begin{lemma}[\cite{BMA}, Theorem 4]\label{lem201}
 Let $\mathcal R_1,$ $\mathcal R_2$ be two rearrangement classes  of some $L^p$ function and some $L^{p'}$ function on $\mathbb S^2_+$, respectively. Then for any $g_2\in\mathcal R_2,$ there exists $g_1\in \mathcal R_1$, such that
\[\int_{\mathbb S^2_+} g_1g_2 d\sigma\geq \int_{\mathbb S^2_+}f_1f_2d\sigma \quad\forall\, f_1\in {\mathcal R}_1,\,\,f_2\in {\mathcal R}_2.\]
\end{lemma}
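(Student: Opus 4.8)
The plan is to establish, for a fixed $g_2 \in \mathcal{R}_2$, the existence of a "simultaneously optimal" $g_1 \in \mathcal{R}_1$ by a direct variational argument over the weak-$L^p$ closure of $\mathcal{R}_1$. First I would recall Burton's structural facts: the rearrangement class $\mathcal{R}_1$ of an $L^p(\mathbb{S}^2_+)$ function is bounded in $L^p$, and its closure $\overline{\mathcal{R}_1}^{w}$ in the weak topology of $L^p$ is weakly sequentially compact and convex; moreover the extreme points of $\overline{\mathcal{R}_1}^{w}$ are precisely the elements of $\mathcal{R}_1$ itself. The linear functional $f_1 \mapsto L(f_1):=\int_{\mathbb{S}^2_+} f_1 g_2\, d\sigma$ is well-defined and weakly continuous on $\overline{\mathcal{R}_1}^{w}$ since $g_2 \in L^{p'}(\mathbb{S}^2_+)$ (here $p'$ is the conjugate exponent). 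Hence $L$ attains its supremum over $\overline{\mathcal{R}_1}^{w}$ at some point.

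The key step is to show the maximizer can be taken in $\mathcal{R}_1$ and not merely in its weak closure. A continuous linear functional on a weakly compact convex set attains its maximum at an extreme point (this is the Bauer maximum principle, valid because the set of maximizers is a nonempty closed face, hence has an extreme point of the whole set among its own extreme points). Since the extreme points of $\overline{\mathcal{R}_1}^{w}$ all lie in $\mathcal{R}_1$, we obtain $g_1 \in \mathcal{R}_1$ with $L(g_1) = \max_{f_1 \in \overline{\mathcal{R}_1}^{w}} L(f_1) \geq \int_{\mathbb{S}^2_+} f_1 g_2\, d\sigma$ for all $f_1 \in \mathcal{R}_1$. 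Finally, to upgrade the right-hand side from a fixed $f_2 = g_2$ to an arbitrary $f_2 \in \mathcal{R}_2$, I invoke the elementary fact that the pairing $\int_{\mathbb{S}^2_+} f_1 f_2\, d\sigma$ over $f_1\in\mathcal{R}_1,\ f_2\in\mathcal{R}_2$ is maximized by taking $f_2$ and $f_1$ "equimeasurably aligned": for any $f_1 \in \mathcal{R}_1$ and any $f_2 \in \mathcal{R}_2$ one has $\int f_1 f_2\,d\sigma \le \int f_1^{\downarrow} f_2^{\downarrow}\,d\sigma$ where $\cdot^{\downarrow}$ denotes decreasing rearrangement onto an interval, and the value $\int f_1^{\downarrow} f_2^{\downarrow}$ depends only on the classes; so it suffices to have produced $g_1$ optimal against the single representative $g_2$, because $\sup_{f_2\in\mathcal{R}_2} L_{f_2}(g_1)$ with $g_1$ fixed equals $\int g_1^{\downarrow} g_2^{\downarrow}$, which by construction equals $\sup_{f_1\in\mathcal{R}_1}\int f_1 g_2\,d\sigma = \sup_{f_1,f_2}\int f_1 f_2\,d\sigma$.

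I expect the main obstacle to be the careful justification that the extreme points of the weak closure $\overline{\mathcal{R}_1}^{w}$ coincide with $\mathcal{R}_1$, together with the measure-theoretic care needed on the manifold $\mathbb{S}^2_+$ (non-atomic finite measure space) so that Burton's Euclidean arguments transfer verbatim; since $(\mathbb{S}^2_+, d\sigma)$ is a non-atomic finite measure space, all of Burton's theory applies without change, and this is really the only point requiring a remark. The weak compactness and the Bauer maximum principle are standard. Alternatively, and more economically, one may simply cite \cite{BMA} Theorem 4 directly, since the statement there is formulated for an arbitrary non-atomic finite measure space; in that case the "proof" reduces to verifying that $(\mathbb{S}^2_+, |\cdot|)$ satisfies the hypotheses, which is immediate.
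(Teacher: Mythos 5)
The paper does not actually prove this lemma: it is imported verbatim as Theorem~4 of Burton \cite{BMA}, so the only ``proof'' implicit in the paper is the observation (which you make in your closing sentences) that $(\mathbb S^2_+,d\sigma)$ is a non-atomic finite measure space to which Burton's theory applies. Your sketch therefore does more than the paper does, and in outline it is a faithful reconstruction of Burton's machinery: the weak compactness and convexity of the weak closure of $\mathcal R_1$, the identification of its extreme points with $\mathcal R_1$, and the Bauer/Krein--Milman step are all correct and are exactly the ingredients available in \cite{BMA}. The one place where you assert rather than argue is the final upgrade from the single representative $g_2$ to all of $\mathcal R_2$: the inequality $\int f_1f_2\,d\sigma\le\int f_1^{\downarrow}f_2^{\downarrow}\,ds$ is Hardy--Littlewood, but the equality $\sup_{f_1\in\mathcal R_1}\int f_1g_2\,d\sigma=\int f_1^{\downarrow}g_2^{\downarrow}\,ds$ needs the attainment direction, which does \emph{not} follow from your compactness argument (that argument only shows the supremum over $\mathcal R_1$ is attained, not that it reaches the Hardy--Littlewood bound). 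The standard way to get attainment is Ryff's theorem: write $g_2=g_2^{\downarrow}\circ\rho$ for a measure-preserving $\rho:\mathbb S^2_+\to[0,|\mathbb S^2_+|]$ and set $g_1:=f_1^{\downarrow}\circ\rho\in\mathcal R_1$, which realizes the bound. Note, however, that once you have this construction it proves the entire lemma in two lines and renders the weak-closure/extreme-point apparatus superfluous; as written, your hybrid argument leaves precisely its load-bearing step unjustified. So either cite Burton outright, as the paper does, or run the Ryff construction from the start.
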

  \begin{lemma}[\cite{BMA}, Theorem 5]\label{lem202}
 Let $\mathcal R$ be the rearrangement class of some $L^p$ function on $\mathbb S^2_+$. Let $g\in L^{p'}(\mathbb S^2_+)$ be fixed. Define $L: L^p(\mathbb S^2_+)\mapsto\mathbb R$ by setting
 \[L(f)=\int_{\mathbb S^2_+}fg d\sigma,\quad g\in L^s(\mathbb S^2_+).\]
 If $f^*$ is the unique maximizer of $L$ relative to $\mathcal R,$ then  there exists some increasing function $\phi:\mathbb R\mapsto\mathbb R\cup\{\pm\infty\}$ such that
       \[f^*(\mathbf x)=\phi(g(\mathbf x))\quad \mbox{ a.e. }\mathbf x\in\mathbb S^2_+.\]
 \end{lemma}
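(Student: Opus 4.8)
The plan is to extract two structural properties of the maximizer $f^*$ and then build $\phi$ from them: first, that $f^*$ is \emph{comonotone} with $g$, which will follow from maximality alone; second, that $f^*$ is a.e.\ constant on each level set of $g$ of positive measure, which will use uniqueness. I will use repeatedly that $(\mathbb S^2_+,d\sigma)$ is a non-atomic finite measure space, hence measure-isomorphic to an interval: in particular, for any disjoint measurable $A',B'\subset\mathbb S^2_+$ with $|A'|=|B'|$ there is a measure-preserving bijection $\tau$ of $\mathbb S^2_+$ (defined up to null sets) interchanging $A'$ and $B'$ and equal to the identity elsewhere, and more generally $\mathcal R$ is invariant under pre-composition with measure-preserving bijections (the distribution function is preserved). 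All integrals below are over $\mathbb S^2_+$ against $d\sigma$.

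\emph{Step 1 (comonotonicity).} I claim that for a.e.\ pair $(\mathbf x,\mathbf y)$, $g(\mathbf x)<g(\mathbf y)$ forces $f^*(\mathbf x)\le f^*(\mathbf y)$. If not, then (splitting the bad set of pairs along rational separating values and using Fubini) there are rationals $q,r$ and positive-measure sets $A\subset\{g<q,\ f^*>r\}$, $B\subset\{g>q,\ f^*<r\}$; pick $A'\subset A$, $B'\subset B$ with $0<|A'|=|B'|$ and set $E:=A'\cup B'$. Every point of $A'$ has $g$-value below every point of $B'$, yet the larger $f^*$-values ($>r$) sit on $A'$ and the smaller ones ($<r$) on $B'$, so $f^*|_E$ fails to be similarly ordered with $g|_E$ on all of $E$. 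By the strict form of the (finite) Hardy--Littlewood rearrangement inequality on $E$ — equivalently, a single swap $\tau$ in the two-value case and a measurable partition into thin slices in general — the equimeasurable rearrangement $h^*$ of $f^*|_E$ that \emph{is} comonotone with $g|_E$ satisfies $\int_E h^*g\,d\sigma>\int_E f^*g\,d\sigma$. Extending $h^*$ by $f^*$ off $E$ gives $\hat f\in\mathcal R$ with $L(\hat f)>L(f^*)$, contradicting maximality.

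\emph{Steps 2--3 (constancy on fat level sets, and assembling $\phi$).} If $|\{g=c_0\}|>0$ and $f^*|_{\{g=c_0\}}$ were non-constant, choose a measure-preserving bijection $\sigma_0$ of $\{g=c_0\}$ onto itself with $f^*\circ\sigma_0\neq f^*$ there, extend by the identity, and put $f^{**}=f^*\circ\sigma_0\in\mathcal R$; since $g\equiv c_0$ on the region where $f^{**}$ and $f^*$ differ and rearrangement preserves the integral there, $\int f^{**}g\,d\sigma=\int f^*g\,d\sigma$, so $f^{**}$ is another maximizer, contradicting uniqueness. Hence $f^*$ is a.e.\ constant on each positive-measure level set of $g$. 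Now define $\phi:\mathbb R\to\mathbb R\cup\{\pm\infty\}$ by $\phi(t):=\operatorname*{ess\,inf}\{f^*(\mathbf x):g(\mathbf x)>t\}$ (with $\phi(t)=+\infty$ when $\{g>t\}$ is null); $\phi$ is clearly increasing, and Step 1 yields $\operatorname*{ess\,sup}_{\{g<t\}}f^*\le\operatorname*{ess\,inf}_{\{g>t\}}f^*$ for every $t$. Letting $t\uparrow g(\mathbf x)$ and $t\downarrow g(\mathbf x)$ sandwiches $f^*(\mathbf x)$ in $[\phi(g(\mathbf x)^-),\phi(g(\mathbf x)^+)]$ for a.e.\ $\mathbf x$, so $f^*(\mathbf x)=\phi(g(\mathbf x))$ at every $\mathbf x$ where $\phi$ is continuous at $g(\mathbf x)$. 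The monotone $\phi$ has at most countably many jump points $t_0$; for each, either $|\{g=t_0\}|=0$ (the resulting exceptional points form a null set, and their countable union is still null), or $|\{g=t_0\}|>0$, in which case $f^*\equiv$ const on $\{g=t_0\}$ with that constant lying in $[\phi(t_0^-),\phi(t_0^+)]$, so redefining $\phi(t_0)$ to equal it keeps $\phi$ increasing. This gives $f^*=\phi(g)$ a.e.

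\emph{Main obstacle.} The conceptual heart is the strict improvement in Step 1; once the strict Hardy--Littlewood inequality on a non-atomic space is granted, the rest is measure-theoretic bookkeeping. The step most likely to need care in a full write-up is the passage from the two structural facts to "$f^*=\phi(g)$ a.e.", i.e.\ controlling $f^*$ across the countably many jumps of $\phi$ — separating the harmless null level sets from the positive-measure ones and invoking uniqueness (Step 2) only on the latter.
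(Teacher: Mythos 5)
The paper gives no proof of this lemma at all: it is imported verbatim as Theorem 5 of Burton's paper \cite{BMA}, so there is nothing internal to compare your argument against. On its own merits, your proof is correct. The two-set swap in Step 1 does deliver a strict increase: with $\tau$ exchanging $A'\subset\{g<q,\ f^*>r\}$ and $B'\subset\{g>q,\ f^*<r\}$ one has
\[
L(f^*\circ\tau)-L(f^*)=\int_{A'\cup B'}(f^*\circ\tau-f^*)(g-q)\,d\sigma>0,
\]
since the integrand is strictly positive on $A'\cup B'$ and the swap kills the $q$-term; the reduction to countably many rational pairs $(q,r)$ via Fubini is the right way to make ``for a.e.\ pair'' precise, and you do not actually need the general strict Hardy--Littlewood inequality, only this swap. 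Step 2 is where uniqueness enters, and it is used correctly: a nontrivial measure-preserving self-map of a fat level set of $g$ produces a second maximizer. Step 3 is the only place requiring real care, and you identify the two delicate points yourself: the sandwich $\phi(g(\mathbf x)^-)\le f^*(\mathbf x)\le\phi(g(\mathbf x)^+)$ must be obtained through a countable dense set of thresholds so the exceptional null sets accumulate to a null set, and the value of $\phi$ at each jump $t_0$ with $|\{g=t_0\}|>0$ must be overwritten by the constant value of $f^*$ there (note this redefinition is genuinely needed, e.g.\ at $t_0=\operatorname*{ess\,sup}g$, where your initial formula gives $\phi=+\infty$). One structural remark: Burton's published proof runs in the opposite direction --- it first manufactures an increasing comonotone candidate $\phi\circ g$ from decreasing rearrangements and then invokes uniqueness once, globally, to identify $f^*$ with it --- whereas you derive the comonotone structure of $f^*$ directly from maximality and spend uniqueness only locally on the fat level sets. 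Your route is more elementary and makes transparent exactly where uniqueness is indispensable.
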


\begin{lemma}[\cite{BACT}, Lemma 2.3]\label{lem203}
Let  $\mathcal R_1$ and $\mathcal R_2$ be two  rearrangement classes  of two $L^{p}$ functions  on $\mathbb S^2_+$. Then for any $g_1\in\mathcal R_1$, there exists $g_2\in \mathcal R_2$ such that
\begin{equation}\label{kd01}
\|g_1-g_2\|_{L^p(\mathbb S^2_+)}=\inf\left\{\|f_1-f_2\|_{L^p(\mathbb S^2_+)}\mid  f_1\in\mathcal R_1,\, f_2\in\mathcal R_2 \right\}.
\end{equation}
\end{lemma}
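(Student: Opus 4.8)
The plan is to collapse the two-dimensional problem on $\mathbb S^2_+$ to a one-dimensional comparison on the interval $(0,m)$, where $m:=|\mathbb S^2_+|$, by passing to decreasing rearrangements onto an interval. Two structural facts drive the argument: $(\mathbb S^2_+,d\sigma)$ is a non-atomic finite measure space, and any two functions in the same rearrangement class have the same distribution function, hence the same decreasing rearrangement. For measurable $f:\mathbb S^2_+\to\mathbb R$ let $f^\star\in L^p(0,m)$ denote its decreasing rearrangement onto $(0,m)$. Then $f^\star$ is constant on each of $\mathcal R_1,\mathcal R_2$; write $\varphi_1,\varphi_2\in L^p(0,m)$ for these two common rearrangements and set
\[
d_0:=\Big(\int_0^m|\varphi_1(s)-\varphi_2(s)|^p\,ds\Big)^{1/p}.
\]
The goal is to show the infimum in \eqref{kd01} equals $d_0$, and that it is realized, for the prescribed $g_1$, by an explicitly constructed $g_2$.

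For the construction, fix $g_1\in\mathcal R_1$. Since $(\mathbb S^2_+,d\sigma)$ is non-atomic of total mass $m$, Ryff's representation theorem furnishes a measure-preserving map $\tau:\mathbb S^2_+\to(0,m)$ with $g_1=g_1^\star\circ\tau=\varphi_1\circ\tau$ a.e. I would then simply set $g_2:=\varphi_2\circ\tau$. Because $\tau$ is measure preserving, $g_2$ is equimeasurable with $\varphi_2$, hence with the generator of $\mathcal R_2$, so $g_2\in\mathcal R_2$; and by the change-of-variables formula for measure-preserving maps,
\[
\|g_1-g_2\|_{L^p(\mathbb S^2_+)}^p=\int_{\mathbb S^2_+}\big|\varphi_1(\tau)-\varphi_2(\tau)\big|^p\,d\sigma=\int_0^m|\varphi_1-\varphi_2|^p\,ds=d_0^p.
\]
Thus $\|g_1-g_2\|_{L^p(\mathbb S^2_+)}=d_0$, which shows at once that the infimum in \eqref{kd01} is at most $d_0$ and that this value is attained by the pair $(g_1,g_2)$ built from the given $g_1$.

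It then remains to prove the matching lower bound $\|f_1-f_2\|_{L^p(\mathbb S^2_+)}\ge d_0$ for every $f_1\in\mathcal R_1$, $f_2\in\mathcal R_2$. Since $f_1^\star=\varphi_1$ and $f_2^\star=\varphi_2$, this is precisely the nonexpansivity of the decreasing rearrangement,
\[
\|f_1^\star-f_2^\star\|_{L^p(0,m)}\le\|f_1-f_2\|_{L^p(\mathbb S^2_+)}.
\]
Taking the infimum over admissible pairs gives $d_0\le\inf\{\cdots\}$, and combined with the previous paragraph this forces $\inf\{\cdots\}=d_0=\|g_1-g_2\|_{L^p(\mathbb S^2_+)}$, which is exactly \eqref{kd01}.

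The crux, and the step demanding the most care, is the contraction inequality in the last display: replacing $f_1,f_2$ by their decreasing rearrangements does not increase the $L^p$ distance. This is a classical but genuinely nontrivial rearrangement fact, the continuous form of the statement that, for fixed reals $b_1,\dots,b_n$, the sum $\sum_i|a_{\pi(i)}-b_i|^p$ is minimized over permutations $\pi$ by ordering the two lists alike (equivalently, the monotone coupling is optimal for the convex transport cost $|x-y|^p$). I would either transcribe this minimization argument to the continuum by approximation with simple functions and passage to the limit, or simply cite the classical fact that the decreasing rearrangement is a contraction on every rearrangement-invariant norm. It is essential that $|t|^p$ is convex: the naive pointwise-in-$t$ distributional domination $|\{s:|\varphi_1-\varphi_2|>t\}|\le|\{\mathbf x:|f_1-f_2|>t\}|$ is false in general, so one cannot argue level set by level set. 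A minor ancillary point is the applicability of Ryff's theorem, which needs only that $(\mathbb S^2_+,d\sigma)$ is a standard non-atomic measure space isomorphic to $(0,m)$ with Lebesgue measure; this holds here.
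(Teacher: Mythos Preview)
Your argument is correct and is in fact the standard proof of this result: the paper itself does not supply a proof but simply cites Burton \cite{BACT}, Lemma 2.3, where the same strategy (Ryff's factorization through the decreasing rearrangement, together with the $L^p$-nonexpansivity of the rearrangement map) is used. There is nothing to add.
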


We end this section with the following technical lemma that will be used in Section \ref{sec5}.

 \begin{lemma}\label{tcl}
  Suppose that $u\in W^{2,p} (\mathbb S^2_+)$ satisfies
$ u<0$ on $\partial \mathbb S^2_+$.
  Denote
\[  U:= \{\mathbf x\in\mathbb S^2_+\mid u(\mathbf x)>0\}.\]
 Then there exists some $C>0$, depending only on $p$, such that
\[\|\nabla u\|_{L^{2}(U)}\leq C\|\Delta u\|_{L^{p}(U)} |U|^{1/p'}.\]
 \end{lemma}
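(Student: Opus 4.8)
The plan is to reduce the estimate to an integration by parts against the positive part $u^{+}:=\max\{u,0\}$, then close it with Hölder's inequality and a Sobolev--Poincaré inequality carrying the correct power of $|U|$. First, since $p>1$ we have the Sobolev embedding $W^{2,p}(\mathbb S^2_+)\hookrightarrow C^{0}(\overline{\mathbb S^2_+})$, so $u$ is continuous up to the equator; as $u<0$ on the compact set $\partial\mathbb S^2_+$, there is a collar neighbourhood of $\partial\mathbb S^2_+$ on which $u<0$, hence $U$ is compactly contained in the interior of $\mathbb S^2_+$. By the chain rule for Sobolev functions, $u^{+}\in W^{1,p}(\mathbb S^2_+)$ with $\nabla u^{+}=\chi_{U}\nabla u$ a.e.; moreover $u^{+}$ vanishes near $\partial\mathbb S^2_+$, and since $W^{2,p}\hookrightarrow L^{\infty}$ and $\nabla u\in W^{1,p}\hookrightarrow L^{2}$ in dimension two, we get $u^{+}\in L^{\infty}\cap W^{1,2}$, so in fact $u^{+}\in W^{1,2}_{0}(\mathbb S^2_+)$ with $\mathrm{supp}\,u^{+}\subset\overline U$.

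Next I would establish the identity
\[
\int_{U}|\nabla u|^{2}\,d\sigma=\int_{\mathbb S^2_+}\nabla u^{+}\cdot\nabla u\,d\sigma=-\int_{\mathbb S^2_+}u^{+}\,\Delta u\,d\sigma=-\int_{U}u\,\Delta u\,d\sigma .
\]
This is justified by approximating $u$ by smooth functions $u_{n}\to u$ in $W^{2,p}(\mathbb S^2_+)$ and $u^{+}$ by $C_{c}^{\infty}$ functions supported in the interior, applying the classical Green formula (with no boundary term, since the test functions are compactly supported), and passing to the limit using $\nabla u_{n}\to\nabla u$ in $L^{2}$, $\Delta u_{n}\to\Delta u$ in $L^{p}$, together with $u^{+}\in L^{2}\cap L^{p'}$. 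Then Hölder's inequality gives
\[
\|\nabla u\|_{L^{2}(U)}^{2}\le\|u\|_{L^{p'}(U)}\,\|\Delta u\|_{L^{p}(U)}=\|u^{+}\|_{L^{p'}(\mathbb S^2_+)}\,\|\Delta u\|_{L^{p}(U)} ,
\]
so it only remains to bound $\|u^{+}\|_{L^{p'}(\mathbb S^2_+)}$ appropriately.

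The crux is the scale-correct Sobolev--Poincaré inequality
\[
\|v\|_{L^{p'}(\mathbb S^2_+)}\le C(p)\,|U|^{1/p'}\,\|\nabla v\|_{L^{2}(\mathbb S^2_+)}\qquad\text{for all }v\in W^{1,2}_{0}(\mathbb S^2_+)\text{ with }\mathrm{supp}\,v\subset\overline U .
\]
For $p'\le 2$ this follows from the two-dimensional embedding $W^{1,1}_{0}(\mathbb S^2_+)\hookrightarrow L^{2}(\mathbb S^2_+)$, the bound $\|\nabla v\|_{L^{1}}=\|\nabla v\|_{L^{1}(U)}\le|U|^{1/2}\|\nabla v\|_{L^{2}}$, and one more Hölder step to pass from $L^{2}$ to $L^{p'}$; for $p'>2$ one combines the resulting Poincaré bound $\|v\|_{L^{2}}\le C|U|^{1/2}\|\nabla v\|_{L^{2}}$ with the Gagliardo--Nirenberg interpolation inequality $\|v\|_{L^{p'}}\le C(p)\|v\|_{W^{1,2}}^{1-2/p'}\|v\|_{L^{2}}^{2/p'}$ on $\mathbb S^2_+$ (cf.\ \cite{Au,Hebey}) and the fact that $|U|\le|\mathbb S^2_+|$. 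Applying this with $v=u^{+}$ and $\|\nabla u^{+}\|_{L^{2}}=\|\nabla u\|_{L^{2}(U)}$ yields $\|\nabla u\|_{L^{2}(U)}^{2}\le C|U|^{1/p'}\,\|\nabla u\|_{L^{2}(U)}\,\|\Delta u\|_{L^{p}(U)}$; dividing by $\|\nabla u\|_{L^{2}(U)}$ (the inequality being trivial when this vanishes) gives the claim. I expect the main obstacle to be precisely this last inequality: keeping the constant independent of $U$ while retaining the sharp power $|U|^{1/p'}$, particularly for large $p'$ (small $p$), where the homogeneous Gagliardo--Nirenberg inequality, rather than a plain domain-dependent Sobolev embedding, must be invoked.
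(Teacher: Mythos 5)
Your proposal is correct and follows essentially the same route as the paper: integrate by parts against $u^{+}$, apply H\"older to get $\|\nabla u\|_{L^2(U)}^2\le \|u\|_{L^{p'}(U)}\|\Delta u\|_{L^p(U)}$, and then prove the scale-correct bound $\|u^{+}\|_{L^{p'}}\le C|U|^{1/p'}\|\nabla u\|_{L^2(U)}$ by splitting into the cases $p'\le 2$ and $p'>2$. The only (harmless) deviation is in the case $p'>2$, where you use Poincar\'e plus the two-dimensional Gagliardo--Nirenberg interpolation, while the paper instead uses the critical embedding $W^{1,s}_0(\mathbb S^2_+)\hookrightarrow L^{p'}(\mathbb S^2_+)$ with $s=2p'/(p'+2)$ followed by H\"older; both yield the same power $|U|^{1/p'}$ with a constant depending only on $p$.
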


\begin{proof}
By the Sobolev embedding theorems for compact manifolds (cf. \cite{Au,Hebey}), we have that  $u\in W^{1,r}(\mathbb S^2_+)$ for some $r>2$. In particular, $u\in L^\infty(\mathbb S^2_+).$ 
Denote by $u^+$   the positive part of $u$, i.e.,
\[u^+(\mathbf x):=
\begin{cases}
u(\mathbf x),&\mbox{if }\,\,u(\mathbf x)>0,\\
0,&\mbox{if }\,\,u(\mathbf x)\leq 0.
\end{cases}\]
Since $u<0$ on $\partial \mathbb S^2_+$, it holds  that $u^+\in W_0^{1,r}(\mathbb S_+^2)$  for some $r>2$,  and  
\[\nabla u^+(\mathbf x):=
\begin{cases}
\nabla u(\mathbf x),&\mbox{if }\,\,u(\mathbf x)>0,\\
\mathbf 0,&\mbox{if }\,\,u(\mathbf x)\leq 0.
\end{cases}\]
By integration by parts and H\"older's inequality, 
\begin{equation}\label{e201}
\int_{U}|\nabla u|^{2}d\sigma=\int_{\mathbb S^2_+}  \nabla u^+\cdot\nabla ud\sigma=\int_{\mathbb S^2_+}(-\Delta u)u^+ d\sigma=\int_{U}(-\Delta u)u dx\leq \|u\|_{L^{p'}(U)}\|\Delta u\|_{L^{p}(U)}.
\end{equation}
To complete the proof, it suffices to show that there exists some   $C>0$, depending only on $p$, such that
\begin{equation}\label{e202}
  \|u\|_{L^{p'}(U)}\leq C\|\nabla u\|_{L^{2}(U)}|U|^{1/p'}.
\end{equation}
To prove \eqref{e202}, we distinguish two cases:
\begin{itemize}
\item[(i)]
$1<p\leq 2.$  In this case,
\[1\leq s:= \frac{2p'}{p'+2}<2.\]
Then by the Sobolev embedding $W^{1,s}_{0}(\mathbb S^2_+)\hookrightarrow L^{p'}(\mathbb S^2_+)$ (note that $u^+\in W^{1,s}_{0}(\mathbb S^2_+)$ since $s<2$) and H\"older's inequality, 
 \[ 
  \|u\|_{L^{p'}(U)}=\|u^+\|_{L^{p'}(\mathbb S^2_+)}\leq C \|\nabla u^+\|_{L^{s}(\mathbb S^2_+)}= C\|\nabla u\|_{L^s(U)}\leq C \|\nabla u\|_{L^{2}(U)}|U|^{1/p'}.
 \] 
  Hence \eqref{e202} has been verified.

 \item[(ii)] $2<p<\infty.$ In this case,
 \[2< s:=\frac{2p'}{2-p'}<\infty.\]
By H\"older's inequality,
 \begin{equation}\label{e204}
\|u\|_{L^{p'}(U)}\leq \|u\|_{L^{2}(U)}\mathfrak |U|^{1/s}.
 \end{equation}
Using the Sobolev embedding $W^{1,1}_{0}(\mathbb S^2_+)\hookrightarrow L^{2}(\mathbb S^2_+),$  $\|u\|_{L^{2}(U)}$ can be estimated as follows:
 \begin{equation}\label{e205}
  \|u\|_{L^{2}(U)}=\|u^+\|_{L^{2}(\mathbb S^2_+)} \leq  C\|\nabla u^+\|_{L^{1}(\mathbb S^2_+)}= C\|\nabla u\|_{L^{1}(U)}\leq C\|\nabla u\|_{L^{2}(U)}|U|^{1/2}.
 \end{equation}
 From \eqref{e204} and \eqref{e205}, we get \eqref{e202}.
\end{itemize}
\end{proof}

 \section{Existence}\label{sec3}

 In this section, we give the proof of Theorem \ref{thme}.
 First we show the existence of a maximizer.
 
 \begin{proof}[Proof of Theorem \ref{thme}(i)]
 First, by Lemma \ref{greo} in Section \ref{sec2}, it is easy to check that $\mathcal E$ is bounded on $\mathcal R$, which implies that   $M\in\mathbb R.$

 Next we choose a sequence $\{v_n\}_{n=1}^\infty\subset\mathcal R$ such that
 \[\lim_{n\to\infty}\mathcal E(v_n)=M.\]
 Since $\{v_n\}$ is bounded in $L^p(\mathbb S^2_+)$, we can assume, up to a subsequence, that
 $v_n$ converges to some $ \hat v\in \overline{\mathcal R^w}$ weakly  in $L^p(\mathbb S^2_+)$.
 Here $\overline{\mathcal R^w}$ is the weak closure of $\mathcal R$ in $L^p(\mathbb S^2_+).$  Since $\mathcal E$ is weakly sequentially continuous in $L^p(\mathbb S^2_+)$, which can be easily verified by  Lemma \ref{greo}(ii), we have that
 \begin{equation}\label{htm01}
 \mathcal E(\hat v)=M.
 \end{equation}
  Denote by $L$ the derivative of $\mathcal E$ at $\hat v$, i.e.,
  \[L(v):=\int_{\mathbb S^2_+}v(\mathcal G_+\hat v-\lambda x_3)d\sigma.\]
  
  We claim that  
  \begin{equation}\label{clm01}
  \mbox{$\hat v$ is the unique maximizer of $L$ relative to $\overline{\mathcal R^w}.$}
   \end{equation}
   To this end, fix $ v\in \overline{\mathcal R^w}\setminus \{\hat v\}$. By \eqref{htm01} and the   fact that
 \[\sup_{  \overline{\mathcal R^w}}\mathcal E=\sup_{ \mathcal R}\mathcal E =M,\]
 we have that
 \begin{equation}\label{htm02}
 \mathcal E(\hat v)\geq \mathcal E(v).
 \end{equation}
On the other hand, using the symmetry of the Green operator $\mathcal G_+$ (cf. Lemma \ref{greo}(iii)),  the following identity holds:
 \begin{equation}\label{htm03}
 \mathcal E(v)-\mathcal E(\hat v)=L(v)-L(\hat v)+E(v-\hat v).
 \end{equation}
From \eqref{htm02} and \eqref{htm03},  we get
 \begin{equation}\label{lleq}
  L(\hat v)-L(v)\geq E(v-\hat v)>0.
  \end{equation}
Note that in the last inequality we used  Lemma \ref{greo}(iv). Hence \eqref{clm01} has been proved.
  
  Finally we show that 
  $\hat v\in\mathcal R.$ By Lemma \ref{lem201} in Section \ref{sec2}, there exists some $\tilde v\in {\mathcal R}$ such that 
  \[L(\tilde v)\geq L(v)\quad\forall\,v\in\mathcal R,\]
which, in combination with the fact that
\[ \sup_{v\in\mathcal R}L(v)=\sup_{v\in\bar{\mathcal R}}L(v),\]
implies that   $\tilde v$ is also  a maximizer of $L$ relative to $\overline{\mathcal R^w}$. Then by \eqref{clm01}, we get $\hat v=\tilde v\in\mathcal R$.
 \end{proof}

Next we prove Theorem \ref{thme}(ii).
 
\begin{proof}[Proof of Theorem \ref{thme}(ii)]
Repeating the argument in the proof of \eqref{clm01}, we see that any $v\in\mathcal M$ must be the unique maximizer of the following linear functional $L$
 relative to ${\mathcal R},$
 \[L(w):=\int_{\mathbb S^2_+}(\mathcal G_+v-\lambda x_3)w d\sigma. \]
  Then the desired assertion  is a straightforward consequence of Lemma \ref{lem202} in Section \ref{sec2}.
\end{proof}

Finally we prove Theorem \ref{thme}(iii). The main idea comes from  \cite{BAR} and \cite{CW20}.

\begin{proof}[Proof of Theorem \ref{thme}(iii)]

 Fix $v\in\mathcal M.$  Then by Theorem \ref{thme}(ii) there exists an increasing function $\phi:\mathbb R \to \mathbb R   \cup    \{\pm\infty\}$ such that
\begin{equation}\label{dd01}
v=\phi(\mathcal G_+v-\lambda x_3)\quad \mbox{a.e. on  }\mathbb S^2_+.
\end{equation}
  Denote
   \[C_\phi=\{s\in \mathbb R\mid \phi \mbox{ is continuous at } s\},\quad D_\phi=\mathbb R\setminus C_\phi.\]
Since $\phi$ is increasing,   $D_\phi$ must be countable. According to Lemma \ref{lem200} in Section \ref{sec2}, there exists a sequence  of  bounded, smooth real  functions $\{\phi_n\}$ such that
     \begin{equation}\label{gncr1}
     |\phi_n(s)|\leq |\phi(s)|\quad \forall\,s\in \mathbb R,\,n=1,2,\cdot\cdot\cdot,
     \end{equation}
 \begin{equation}\label{gncr2}
         \lim_{n\to\infty}\phi_n(s)=\phi(s)\quad \forall\,s\in C_\phi.
\end{equation}
Denote $v_n:=\phi_n(\mathcal G_+  v-\lambda x_3)$. Then by \eqref{gncr1},
 \begin{equation}\label{dd02}
      |v_n(\mathbf x)|\leq |v(\mathbf x)|\quad\mbox{a.e. }\mathbf x\in \mathbb S^2_+.
\end{equation}

We claim that
   \begin{equation}\label{aecv}
   \lim_{n\to \infty} v_n(\mathbf x)=  v(\mathbf x)\quad \mbox{a.e. }\mathbf x\in \mathbb S^2_+.
   \end{equation}
  In fact, if $\mathbf x\in (\mathcal G_+  v-\lambda x_3)^{-1}(C_{\phi})$, then  by \eqref{gncr2},
   \[\lim_{n\to\infty}\phi_n(\mathcal G_+  v-\lambda x_3)= \phi(\mathcal G_+  v-\lambda x_3).\]
Recalling \eqref{dd01}, we get
  \begin{equation}\label{ns09}
  \lim_{n\to\infty} v_n(\mathbf x)= v(\mathbf x) \quad\mbox{a.e. } \mathbf x\in (\mathcal G_+  v-\lambda x_3)^{-1}(C_{\phi}).
  \end{equation}
To deal with the case $\mathbf x\in (\mathcal G_+  v-\lambda x_3)^{-1}(D_{\phi})$, noting that for any $s\in D_\phi$,
\[ -\Delta (\mathcal G_+v-\lambda x_3)=0 \quad \mbox{ a.e. on }\,\, (\mathcal G_+v -\lambda x_3)^{-1}(s), \]
we obtain 
\[x_3=\frac{1}{2\lambda}\phi(s) \quad \mbox{ a.e. on }\,\, (\mathcal G_+v -\lambda x_3)^{-1}(s).\]
 Here we used the fact that 
 \begin{equation}\label{lx3}
 -\Delta x_3=2 x_3.
 \end{equation}
In other words,
\[ (\mathcal G_+v -\lambda x_3)^{-1}(s)\subset \left\{\mathbf x\in\mathbb S^2_+\mid x_3=\frac{1}{2\lambda}\phi(s)  \right\},\]
implying that 
 $(\mathcal G_+v -\lambda x_3)^{-1}(s)$ has measure zero.
Since  $D_{\phi}$ is countable, we deduce that
\begin{equation}\label{ns01}
\lim_{n\to \infty} v_n(\mathbf x)=   v(\mathbf x) \quad\mbox{ a.e. }\mathbf x\in (\mathcal G_+  v -\lambda x_3)^{-1}(D_\phi).
\end{equation}
From \eqref{ns09} and \eqref{ns01}, the claim \eqref{aecv} has been verified.

To proceed, notice that
\[{\rm div}\left(v_n\nabla^\perp  (\mathcal G_+v-\lambda x_3)\right)= \phi'_n(\mathcal G_+v-\lambda x_3)\nabla^\perp(\mathcal G_+ v-\lambda x_3)\cdot\nabla (\mathcal G_+ v-\lambda x_3)\equiv 0.\]
Hence for  any $\xi\in C_c^\infty(\mathbb S^2_+),$   integrating by parts gives
   \begin{equation}\label{vne0}
   \int_{\mathbb S^2_+}  v_n\nabla^\perp  (\mathcal G_+ v-\lambda x_3) \cdot\nabla \xi d\sigma=0.
   \end{equation}
Thanks to \eqref{dd02} and \eqref{aecv},  we can apply  Lebesgue's dominated convergence theorem to \eqref{vne0}  to get
\[ \int_{\mathbb S^2_+}  v\nabla^\perp  (\mathcal G_+ v-\lambda x_3) \cdot\nabla \xi d\sigma=0.\]
Thus the proof has been completed.
\end{proof}

 \section{Stability}\label{sec4}

The purpose of this section is to give the proof of Theorem \ref{thmos}.
We begin with the following compactness result.

\begin{proposition}[Compactness]\label{propcom}
Let $\{v_n\}_{n=1}^\infty\subset\mathcal R$ be a sequence such that
 \[\lim_{n\to\infty}\mathcal E(v_n)=M.\]
 Then, up to a subsequence, there exists  some $\hat v\in\mathcal M$ such that
 $v_{n}$ converges to  $\hat v$ strongly  in $L^p(\mathbb S^2_+)$ as $n\to\infty$.
\end{proposition}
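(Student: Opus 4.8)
The plan is to mimic the variational argument from the proof of Theorem \ref{thme}(i) to identify the weak limit of $\{v_n\}$, and then to upgrade weak convergence to strong convergence by exploiting the uniform convexity of $L^p(\mathbb S^2_+)$.

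First I would observe that, since all members of a rearrangement class share the same distribution function, $\{v_n\}$ is bounded in $L^p(\mathbb S^2_+)$; after passing to a subsequence (not relabelled) we may assume $v_n\rightharpoonup\hat v$ weakly in $L^p(\mathbb S^2_+)$ for some $\hat v\in\overline{\mathcal R^w}$. By the compactness of $\mathcal G_+$ (Lemma \ref{greo}(ii)) the functional $\mathcal E$ is weakly sequentially continuous, so $\mathcal E(\hat v)=\lim_{n\to\infty}\mathcal E(v_n)=M=\sup_{\overline{\mathcal R^w}}\mathcal E$. I would then repeat the reasoning behind \eqref{clm01}: setting $L(w):=\int_{\mathbb S^2_+}w(\mathcal G_+\hat v-\lambda x_3)\,d\sigma$ and combining the decomposition \eqref{htm03} with $\mathcal E(\hat v)=M$ and the positive definiteness of $\mathcal G_+$ (Lemma \ref{greo}(iv)), one obtains that $\hat v$ is the \emph{unique} maximizer of $L$ relative to $\overline{\mathcal R^w}$. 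Since Lemma \ref{lem201} produces a maximizer of $L$ lying already in $\mathcal R$, uniqueness forces $\hat v\in\mathcal R$, and hence $\hat v\in\mathcal M$ because $\mathcal E(\hat v)=M$.

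The remaining point is the passage from weak to strong convergence. Here I would use that $v_n$ and $\hat v$ all belong to the rearrangement class $\mathcal R$, so $\|v_n\|_{L^p(\mathbb S^2_+)}=\|\hat v\|_{L^p(\mathbb S^2_+)}$ for every $n$ and in particular the norms converge; since $1<p<\infty$, $L^p(\mathbb S^2_+)$ is uniformly convex and therefore enjoys the Radon--Riesz property, so weak convergence together with convergence of the norms yields $v_n\to\hat v$ strongly in $L^p(\mathbb S^2_+)$, which finishes the proof. The step I would single out as the heart of the argument is the identification $\hat v\in\mathcal R$ in the previous paragraph: this is the only place where genuine rearrangement theory (Burton's Lemma \ref{lem201}), rather than soft functional analysis, is needed — it is what guarantees the weak limit is a true element of the class and not merely of its weak closure, and once it is in hand the equality of $L^p$ norms within $\mathcal R$ makes the weak-to-strong upgrade immediate.
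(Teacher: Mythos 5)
Your proposal is correct and follows essentially the same route as the paper: the paper's proof simply says ``repeating the argument in the proof of Theorem \ref{thme}(i)'' to obtain the weak limit $\hat v\in\mathcal M$ (which is exactly your identification step via \eqref{clm01} and Lemma \ref{lem201}), and then concludes strong convergence from the equality of $L^p$ norms within $\mathcal R$ together with the uniform convexity of $L^p(\mathbb S^2_+)$. No discrepancies to report.
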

\begin{proof}
   The main part of the proposition  has been verified in  Section \ref{sec3}. In fact, repeating the argument in the proof of Theorem \ref{thme}(i), we see that, up to a subsequence, there exists  some $\hat v\in\mathcal M$ such that
 $v_{n}$ converges to  $\hat v$ \emph{weakly}  in $L^p(\mathbb S^2_+)$ as $n\to\infty$. Since $\{v_n\}\subset\mathcal R$ and $\hat v\in\mathcal R,$ we have that
 \[\|v_n\|_{L^p(\mathbb S^2_+)}=\|v \|_{L^p(\mathbb S^2_+)}\quad\forall\,n.
\]
Strong convergence then follows from the uniform convexity of $L^p(\mathbb S^2_+)$ when $1<p<\infty$.
\end{proof}

 \begin{definition}[Admissible map]\label{damp3}
   A map $\eta:\mathbb R\mapsto L^p(\mathbb S^+)$ is called an admissible map if it satisfies
   \begin{itemize}
     \item [(i)]$\mathcal E(\eta(t))=\mathcal E(\eta(0))$ for any $t\in\mathbb R$;
     \item [(ii)] $\eta(t)\in\mathcal R_{\eta(0)}$ for any $t\in\mathbb R$.
   \end{itemize}
 \end{definition}

Theorem \ref{thmos} is a straightforward consequence of the following 
proposition.
\begin{proposition}
For any $\epsilon>0$, there exists some $\delta>0$, such that for any  admissible map  $\eta(t)$,   
 it holds that
  \[\inf_{v\in\mathcal M}\|\eta(0)-v\|_{L^p(\mathbb S^2_+)}<\delta\quad\Longrightarrow\quad \inf_{v\in\mathcal M}\|\eta(t)-v\|_{L^p(\mathbb S^2_+)}<\epsilon\quad\forall\,t\in\mathbb R.\]

\end{proposition}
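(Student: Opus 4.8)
The plan is to argue by contradiction using the compactness statement in Proposition \ref{propcom}. Suppose the conclusion fails for some $\epsilon_0>0$. Then for every $n\in\mathbb N$ we can find an admissible map $\eta_n(t)$ with $\inf_{v\in\mathcal M}\|\eta_n(0)-v\|_{L^p(\mathbb S^2_+)}<1/n$, yet a time $t_n\in\mathbb R$ at which $\inf_{v\in\mathcal M}\|\eta_n(t_n)-v\|_{L^p(\mathbb S^2_+)}\geq\epsilon_0$. Set $w_n:=\eta_n(0)$ and $u_n:=\eta_n(t_n)$. The idea is to show that $\{u_n\}$ is (after extracting a subsequence and applying Lemma \ref{lem203} to transfer it into the single rearrangement class $\mathcal R$) a maximizing sequence for $(V)$, so Proposition \ref{propcom} forces it to converge to an element of $\mathcal M$, contradicting the lower bound $\epsilon_0$.

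The key steps, in order, are as follows. First, since $\inf_{v\in\mathcal M}\|w_n-v\|_{L^p}\to 0$, pick $\hat v_n\in\mathcal M$ with $\|w_n-\hat v_n\|_{L^p}\to 0$; by continuity of $\mathcal E$ on bounded sets of $L^p(\mathbb S^2_+)$ (Lemma \ref{greo}(ii), which gives weak sequential continuity, hence norm continuity), $\mathcal E(w_n)\to M$. Second, by Definition \ref{damp3}(i), $\mathcal E(u_n)=\mathcal E(w_n)\to M$, so $\{u_n\}$ is a maximizing sequence — but a priori $u_n\in\mathcal R_{w_n}$, a rearrangement class that varies with $n$, not necessarily equal to $\mathcal R$. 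To fix this, use Lemma \ref{lem203}: since $\mathcal R_{w_n}$ and $\mathcal R$ are rearrangement classes of $L^p$ functions on $\mathbb S^2_+$ whose ``defining'' functions $w_n$ and $v$ (any fixed $v\in\mathcal R$) get close, choose $\tilde u_n\in\mathcal R$ realizing $\|\tilde u_n-u_n\|_{L^p}=\inf\{\|f_1-f_2\|_{L^p}: f_1\in\mathcal R,\,f_2\in\mathcal R_{w_n}\}$; this infimum is bounded by $\|\hat v_n - w_n\|_{L^p}$ composed appropriately — more precisely, since $\hat v_n\in\mathcal M\subset\mathcal R$ and $w_n\in\mathcal R_{w_n}$, the infimum is at most $\|\hat v_n-w_n\|_{L^p}\to 0$, so $\|\tilde u_n-u_n\|_{L^p}\to 0$. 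Third, again by norm continuity of $\mathcal E$, $\mathcal E(\tilde u_n)\to M$, so $\{\tilde u_n\}\subset\mathcal R$ is a genuine maximizing sequence for $(V)$. By Proposition \ref{propcom}, up to a subsequence $\tilde u_n\to \hat v_\infty$ strongly in $L^p(\mathbb S^2_+)$ for some $\hat v_\infty\in\mathcal M$. Fourth, then $\|u_n-\hat v_\infty\|_{L^p}\leq \|u_n-\tilde u_n\|_{L^p}+\|\tilde u_n-\hat v_\infty\|_{L^p}\to 0$, so $\inf_{v\in\mathcal M}\|u_n-v\|_{L^p}\to 0$, contradicting $\inf_{v\in\mathcal M}\|u_n-v\|_{L^p}\geq\epsilon_0$ for all $n$. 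This completes the proof, and Theorem \ref{thmos} follows since any sufficiently smooth solution $\omega(t,\cdot)$ of \eqref{eun0} yields an admissible map via the conservation laws \eqref{cl01}--\eqref{cl03}.

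The main obstacle is the bookkeeping around the varying rearrangement classes: one must be careful that the approximation $\tilde u_n\in\mathcal R$ produced by Lemma \ref{lem203} genuinely has $\|\tilde u_n-u_n\|_{L^p}\to 0$, which requires knowing that $\mathcal R_{w_n}$ is ``close'' to $\mathcal R$ in the sense that there exist elements of the two classes close to each other — and this is exactly what $\hat v_n\in\mathcal M\subset\mathcal R$ together with $w_n\in\mathcal R_{w_n}$ and $\|\hat v_n-w_n\|_{L^p}\to 0$ provides, since Lemma \ref{lem203}'s infimum over the product of the two classes is then bounded by this quantity. A secondary point worth stating cleanly is the norm continuity of $\mathcal E$ on $\mathcal R$: although Lemma \ref{greo}(ii) is phrased as compactness/weak continuity, on the norm-bounded set $\mathcal R$ the quadratic form $E$ and the linear form $I$ are Lipschitz, so $\mathcal E(f_n)\to\mathcal E(f)$ whenever $\|f_n-f\|_{L^p}\to 0$ — this is used twice above. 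No new estimates beyond the cited lemmas are needed; the argument is purely a soft compactness-plus-contradiction scheme in the spirit of Burton \cite{BAR}.
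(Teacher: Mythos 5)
Your argument is correct and follows essentially the same route as the paper's proof: both reduce to sequences, use the admissibility conditions to make $\{\eta_n(t_n)\}$ an $\mathcal E$-maximizing family, invoke Lemma \ref{lem203} to produce $\xi_n\in\mathcal R$ with $\|\xi_n-\eta_n(t_n)\|_{L^p}\le\|\eta_n(0)-\hat v_n\|_{L^p}\to 0$, and conclude via the compactness of maximizing sequences in Proposition \ref{propcom}. The only cosmetic difference is that you phrase it as a contradiction with a sequence $\hat v_n\in\mathcal M$ rather than a single $\hat v$, which is an equally valid (indeed slightly more careful) bookkeeping of the same scheme.
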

\begin{proof} 
	It suffice to prove that for any sequence of admissible maps  $\{\eta_n\}_{n=1}^\infty$   and any sequence of times  $\{t_n\}_{n=1}^\infty \subset \mathbb R$, if
\begin{equation}\label{ems1}
 \lim_{n\to\infty}\|\eta_n(0)-\hat v\|_{L^p(\mathbb S^2_+)}= 0
\end{equation}
for some $ \hat v \in\mathcal M$, then, up to a subsequence,
\begin{equation}\label{ems2}
  \lim_{n\to\infty}\|\eta_n(t_n)-\tilde v\|_{L^p(\mathbb S^2_+)}= 0
\end{equation}
for some $ \tilde v\in\mathcal M$.

 By \eqref{ems1} and the continuity of $\mathcal E$ in $L^p(\mathbb S^2_+)$, 
  \begin{equation}\label{cvrm01}
  \lim_{n\to \infty}\mathcal E(\eta_n(0))=\mathcal E(\hat v)=M,
  \end{equation}
which implies that
  \begin{equation}\label{cvrm1}
\lim_{n\to \infty}\mathcal E(\eta_n(t_n)) =M.
  \end{equation}
Applying Lemma \ref{lem203} in Section \ref{sec2}, for each $n$, there exists some $\xi_n\in\mathcal R$ such that
\begin{equation}\label{infvw}
\|\xi_n-\eta_n(t_n)\|_{L^p(\mathbb S^2_+)}= \inf_{v\in\mathcal R,w\in\mathcal R_{\eta_n(t_n)}}\|v-w\|_{L^p(\mathbb S^2_+)}.
\end{equation}
Since  $\hat v\in\mathcal R$ and $\eta_n(0)\in\mathcal R_{\eta_n(t_n)}$ for each $n$, we deduce from \eqref{infvw} that
\begin{equation}\label{eta1}
\|\xi_n-\eta_n(t_n)\|_{L^p(\mathbb S^2_+)}\leq \|\eta_n(0)-\hat v\|_{L^p(\mathbb S^2_+)}.
\end{equation}
From \eqref{ems1} and \eqref{eta1}, we get
\begin{equation}\label{eta2}
\lim_{n\to \infty}\|\xi_n-\eta_n(t_n)\|_{L^p(\mathbb S^2_+)}=0.
\end{equation}
Taking into account \eqref{cvrm1},  we further get
\begin{equation}\label{eta3}
\lim_{n\to \infty}\mathcal E(\xi_n)=M.
\end{equation}

To summarize, we have constructed a  sequence $\{\xi_n\}\subset\mathcal R$ such that
\eqref{eta3} holds. Hence by Proposition \ref{propcom}, there is some $\tilde v\in\mathcal M$ such that, up to a subsequence,
\begin{equation}\label{eta4}
\lim_{n\to \infty}\|\xi_n-\tilde v\|_{L^p(\mathbb S^2_+)}=0.
\end{equation}
 Combining \eqref{eta2} and \eqref{eta4}, we get \eqref{ems2}. Hence the proof has been finished.

\end{proof}

  \section{Limiting behavior}\label{sec5}

 In this section, we give the proof of Theorem \ref{thmlb}.  Throughout this section, we will only be focusing on the case of $\varepsilon$ being sufficiently small, so we can assume, without loss of generality, that $\varepsilon\in(0, \pi/4]$.
 For convenience, we also use $C$ to denote various positive constants which may depend on $\lambda,\kappa,K$, but not on $\varepsilon$, \emph{whose specific values may change from line to line}.
 
 \subsection{Lagrange multiplier and vortex core}
 
For $v\in\mathcal M_\varepsilon,$ let $\phi_v$ be the increasing function in Theorem \ref{thme}(ii) such that \begin{equation}\label{l361}
 v= \phi_v(\mathcal G_+ v-\lambda x_3)  \quad \mbox{ a.e.  }\mathbf x\in\mathbb S^2_+.
\end{equation}
Define the Lagrange multiplier $\mu_v$ related to $v$ by setting
 \begin{equation}\label{dolm}
 \mu_v:=\inf\left\{s\in\mathbb R\mid \phi_v(s)>0\right\}.
 \end{equation}

 \begin{lemma}\label{sce01}
 For any  $v\in\mathcal M_\varepsilon,$ it holds that
\[
  \left\{\mathbf x\in\mathbb S^2_+\mid  v(\mathbf x)>0\right\}= \left\{\mathbf x\in\mathbb S^2_+\mid \mathcal G_+v(\mathbf x)-\lambda x_3>\mu_v\right\}
\]
 up to a set of measure zero. 
  \end{lemma}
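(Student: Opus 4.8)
The plan is to prove the two inclusions separately, using the definition of $\mu_v$ together with the structural identity \eqref{l361} and the elliptic equation satisfied by $\mathcal G_+v-\lambda x_3$. First I would establish the ``$\subset$'' inclusion. Suppose $v(\mathbf x)>0$ for some point $\mathbf x$ in the indicated set. By \eqref{l361} we have $v(\mathbf x)=\phi_v(\mathcal G_+v(\mathbf x)-\lambda x_3)>0$, so by the definition \eqref{dolm} of $\mu_v$ as the infimum of the set on which $\phi_v$ is positive, it follows immediately that $\mathcal G_+v(\mathbf x)-\lambda x_3\geq\mu_v$. This gives the inclusion up to the boundary case $\mathcal G_+v(\mathbf x)-\lambda x_3=\mu_v$, which I would handle by the same measure-zero argument as in the proof of Theorem \ref{thme}(iii): on the level set $\{\mathbf x\in\mathbb S^2_+\mid \mathcal G_+v-\lambda x_3=\mu_v\}$, one has $-\Delta(\mathcal G_+v-\lambda x_3)=v-2\lambda x_3$ vanishing a.e. (a level set of a $W^{2,p}$ function carries no contribution from the Laplacian), which forces the set to be contained in a fixed latitude circle $\{x_3=\text{const}\}$ and hence to have measure zero. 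Therefore, up to a null set, $v>0$ implies $\mathcal G_+v-\lambda x_3>\mu_v$.

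For the reverse inclusion ``$\supset$'', suppose $\mathcal G_+v(\mathbf x)-\lambda x_3>\mu_v$. Since $\mu_v$ is the infimum of the set where $\phi_v$ is positive and $\phi_v$ is increasing, every $s>\mu_v$ lies in $(\mu_v,\infty)$; I would argue that $\phi_v(s)>0$ for all such $s$. Indeed, for $s>\mu_v$ there exists $s'$ with $\mu_v<s'<s$ and $\phi_v(s')>0$ (by definition of infimum), and monotonicity of $\phi_v$ gives $\phi_v(s)\geq\phi_v(s')>0$. Hence $v(\mathbf x)=\phi_v(\mathcal G_+v(\mathbf x)-\lambda x_3)>0$. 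Combining the two inclusions and the measure-zero exceptional set yields the claimed set equality up to measure zero.

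I expect the main (and essentially only) subtlety to be the treatment of the level set $\{\mathcal G_+v-\lambda x_3=\mu_v\}$ in the ``$\subset$'' direction: one must argue carefully that this set has measure zero, since otherwise the inclusion would only hold with ``$\geq\mu_v$'' on the right-hand side, and the strict inequality in the statement would fail. The key facts are that $\mathcal G_+v\in W^{2,p}(\mathbb S^2_+)$ by Lemma \ref{greo}(i), that $x_3\in C^\infty$ with $-\Delta x_3=2x_3$ (cf. \eqref{lx3}), and the standard fact that $\Delta u=0$ a.e. on any level set of a $W^{2,p}$ function $u$. Combining these, on $\{\mathcal G_+v-\lambda x_3=\mu_v\}$ one gets $v-2\lambda x_3=0$ a.e.; but on the same set $v=\phi_v(\mu_v)$ is a.e. constant, so $x_3$ is a.e. constant there, confining the set to a single latitude circle, which has measure zero. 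Everything else reduces to elementary manipulations with the increasing function $\phi_v$ and its infimum $\mu_v$, so the write-up should be short.
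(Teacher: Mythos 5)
Your proof is correct and follows essentially the same route as the paper's: both arguments reduce the lemma to the monotonicity of $\phi_v$ together with the definition of $\mu_v$ as an infimum, and both dispose of the borderline level set $\{\mathcal G_+v-\lambda x_3=\mu_v\}$ by the identical observation that $-\Delta(\mathcal G_+v-\lambda x_3)$ vanishes a.e.\ there while $v=\phi_v(\mu_v)$ a.e.\ there, forcing the set into a single latitude circle of measure zero. The only cosmetic difference is that you organize the argument as two inclusions rather than the paper's three separate statements.
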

  \begin{proof}
  From the definition of $\mu_v$ and the monotonicity of $\phi_v$, it is easy to see that
     \begin{equation}\label{muv01}
     v>0\quad\mbox{a.e. }\mathbf x\in \left\{\mathbf x\in\mathbb S^2_+\mid \mathcal G_+v(\mathbf x)-\lambda x_3>\mu_v\right\},
     \end{equation}
  and
   \begin{equation}\label{muv02}
   v=0\quad\mbox{a.e. }\mathbf x\in \left\{\mathbf x\in\mathbb S^2_+\mid \mathcal G_+v(\mathbf x)-\lambda x_3<\mu_v\right\}.
   \end{equation}
   By the property of Sobolev functions,  it holds that    \[-\Delta(\mathcal G_+v(\mathbf x)-\lambda x_3)=0 \quad \mbox{a.e. }\mathbf x\in   \left\{\mathbf x\in\mathbb S^2_+\mid \mathcal G_+v(\mathbf x)-\lambda x_3=\mu_v\right\},\]
 or equivalently,
    \begin{equation}\label{ls01}
     v(\mathbf x)=2\lambda x_3 \quad \mbox{a.e. }\mathbf x\in   \left\{\mathbf x\in\mathbb S^2_+\mid \mathcal G_+v(\mathbf x)-\lambda x_3=\mu_v\right\}.
        \end{equation}
       Here we used \eqref{lx3} again. On the other hand,  according to \eqref{l361},
    \begin{equation}\label{ls02}
    v(\mathbf x)=\phi_v(\mu_v) \quad \mbox{a.e. }\mathbf x\in   \left\{\mathbf x\in\mathbb S^2_+\mid \mathcal G_+v(\mathbf x)-\lambda x_3=\mu_v\right\}.
    \end{equation}
Combining \eqref{ls01} and \eqref{ls02}, we deduce that
    \begin{align*}
    \left\{\mathbf x\in\mathbb S^2_+\mid \mathcal G_+v(\mathbf x)-\lambda x_3=\mu_v\right\} &\subset   \left\{\mathbf x\in\mathbb S^2_+\mid  v(\mathbf x)=2\lambda x_3\right\}\cap  \left\{\mathbf x\in\mathbb S^2_+\mid  v(\mathbf x)=\phi_v(\mu_v)\right\}\\
    &\subset  \left\{\mathbf x\in\mathbb S^2_+\mid  2\lambda x_3=\phi_v(\mu_v)\right\}
    \end{align*}
    up to a set of measure zero.
Since
\[\left|\left\{\mathbf x\in\mathbb S^2_+\mid  2\lambda x_3=\phi_v(\mu_v)\right\}\right|=0,\]
we get
    \begin{equation}\label{muv03}
   \left |\left\{\mathbf x\in\mathbb S^2_+\mid \mathcal G_+v(\mathbf x)-\lambda x_3=\mu_v\right\}\right|=0.
    \end{equation}
    The desired assertion then follows from \eqref{muv01}, \eqref{muv02} and \eqref{muv03}.
  \end{proof}

 \begin{proof}[Proof of Theorem \ref{thmlb}(i)]
For any  $v\in\mathcal M_\varepsilon$, define  
 \[O_v:=\left\{\mathbf x\in\mathbb S^2_+\mid \mathcal G_+v(\mathbf x)-\lambda x_3>\mu_v\right\}.\]
Since $\mathcal G_+v  -\lambda x_3$ is continuous by elliptic regularity and Sobolev embedding, we see that  $O_v$
 is an open set. 
Then Theorem \ref{thmlb}(i) is a straightforward consequence of Lemma \ref{sce01}.
 \end{proof}

 \subsection{Lower bound of Lagrange multiplier}

For $v\in\mathcal M_\varepsilon,$ recall the definition \eqref{dolm} of the Lagrange multiplier $\mu_v$ related to $v$. The aim of this subsection is to prove the following proposition.

 \begin{proposition}\label{lvlm}
 $\mu_v>-\frac{\kappa}{2\pi}\ln\varepsilon-C$ for any $v\in\mathcal M_\varepsilon.$
 \end{proposition}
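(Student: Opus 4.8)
The plan is to obtain the lower bound on $\mu_v$ by testing the variational characterization against a suitably chosen competitor in the rearrangement class $\mathcal R_\varepsilon$, exactly in the spirit of Turkington's energy method. First I would exploit the optimality of $v$: for any $w\in\mathcal R_\varepsilon$ one has $\mathcal E(v)\ge\mathcal E(w)$, and using the decomposition $\mathcal E(w)-\mathcal E(v)=L(w)-L(v)+E(w-v)$ with $L(w)=\int_{\mathbb S^2_+}w(\mathcal G_+v-\lambda x_3)d\sigma$ (as in the proof of Theorem \ref{thme}(i)–(ii)), it follows that $v$ maximizes the linear functional $L$ over $\mathcal R_\varepsilon$. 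Consequently, for every $w\in\mathcal R_\varepsilon$,
\[
\int_{\mathbb S^2_+}(v-w)(\mathcal G_+v-\lambda x_3)\,d\sigma\ge 0.
\]
I would then take $w$ to be the rearrangement of $v$ that equals $\varrho_\varepsilon$ up to rotation — more precisely, recalling that $v\in\mathcal R_\varepsilon=\mathcal R_{\varrho_\varepsilon}$ and using (H1), I would choose $w$ supported in a geodesic ball $B_{\varepsilon'}(\mathbf X_v)$ (with $|B_{\varepsilon'}(\mathbf X_v)|=|O_v|\le|B_\varepsilon(N)|$, so $\varepsilon'\le\varepsilon$) with the same layer-cake structure as $v$. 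On the set $\{v>0\}=O_v$ (Lemma \ref{sce01}) one has $\mathcal G_+v-\lambda x_3>\mu_v$, so $\int_{\mathbb S^2_+}v(\mathcal G_+v-\lambda x_3)d\sigma\le\mu_v\kappa+\int_{O_v}v\cdot(\text{something nonnegative})$; more usefully, since $v=\phi_v(\mathcal G_+v-\lambda x_3)$ and the excess $\mathcal G_+v-\lambda x_3-\mu_v$ contributes a bounded amount (by the energy bound $E(v)\le M_\varepsilon\le C$), the precise inequality to extract is
\[
\mu_v\,\kappa\;\ge\;\int_{\mathbb S^2_+}w(\mathcal G_+v-\lambda x_3)\,d\sigma\;-\;C,
\]
after absorbing the $O_v$-contribution of $v$ into a bounded error using $E(v)\le C$ and $\|v\|_{L^p}=\|\varrho_\varepsilon\|_{L^p}\le K\varepsilon^{-2/p'}$.

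The heart of the matter is then to bound $\int_{\mathbb S^2_+}w(\mathcal G_+v-\lambda x_3)d\sigma$ from below by $-\frac{\kappa}{2\pi}\ln\varepsilon-C$. The term $-\lambda\int w x_3\,d\sigma=-\lambda I(w)$ is bounded in absolute value by $\lambda\kappa$ since $0\le x_3\le1$ on $\mathbb S^2_+$ and $\int w\,d\sigma=\kappa$. For the Green term, write $\mathcal G_+v(\mathbf x)=\frac{1}{2\pi}\int_{\mathbb S^2_+}\ln\frac{|\mathbf x-\mathbf y'|}{|\mathbf x-\mathbf y|}v(\mathbf y)\,d\sigma$. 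Since $w$ and $v$ are both supported in a ball of geodesic radius $\le C\varepsilon$ around $\mathbf X_v$ (this localization is the content of Theorem \ref{thmlb}(ii), but to avoid circularity I would instead at this stage only use that $w$ is supported in $B_\varepsilon(\mathbf X_w)$ for an appropriate center and note that one may take $\mathbf X_w=\mathbf X_v$ by rotating — and crucially $v$ itself need not be localized yet here), the reflected-point distance $|\mathbf x-\mathbf y'|$ stays bounded below by a positive constant (the supports are in $\mathbb S^2_+$, away from the equator after shrinking $\varepsilon$), so the $+\frac{1}{2\pi}\ln|\mathbf x-\mathbf y'|$ part contributes an $O(1)$ error. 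The singular part is $-\frac{1}{2\pi}\iint\ln|\mathbf x-\mathbf y|\,w(\mathbf x)v(\mathbf y)\,d\sigma d\sigma$; since $|\mathbf x-\mathbf y|\le C\varepsilon$ on the relevant supports and $\ln$ is increasing, $-\ln|\mathbf x-\mathbf y|\ge-\ln(C\varepsilon)=-\ln\varepsilon-C$ pointwise, giving $-\frac{1}{2\pi}\iint\ln|\mathbf x-\mathbf y|\,w\,v\ge(-\frac{1}{2\pi}\ln\varepsilon-C)\int w\int v=\kappa^2(-\frac{1}{2\pi}\ln\varepsilon-C)/\kappa=-\frac{\kappa}{2\pi}\ln\varepsilon-C$ after dividing by $\kappa$. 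Assembling these estimates yields $\mu_v\ge-\frac{\kappa}{2\pi}\ln\varepsilon-C$.

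The main obstacle I anticipate is a potential circularity: the clean argument above wants the support of $v$ (not just of the competitor $w$) to be localized within $B_{C\varepsilon}(\mathbf X_v)$, which is Theorem \ref{thmlb}(ii) and is presumably proved \emph{after} this proposition using this very lower bound. To avoid this, the argument must be arranged so that the lower bound on $\mu_v$ uses only: (a) the energy bound $M_\varepsilon\le C$, obtainable directly by plugging $\varrho_\varepsilon$ (or rather a rotated copy placed at latitude $\theta$ maximizing $E-\lambda I$ among zonal competitors) into $(V_\varepsilon)$ and estimating via (H2)–(H3) and the logarithmic singularity of $G_+$; and (b) a lower bound for $L$ evaluated at a \emph{localized} competitor $w$, which only requires localization of $w$, something we control by construction since $w\in\mathcal R_{\varrho_\varepsilon}$ can be taken supported in any ball of the right area. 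The contribution of $v$ to $\int v(\mathcal G_+v-\lambda x_3)d\sigma$ is then handled not by localizing $v$ but by bounding it below by $2E(v)-\lambda I(v)\ge -\lambda\kappa$ directly. The remaining delicate point is the precise bookkeeping of the $O(1)$ constants — in particular ensuring the reflected-distance term and the $\lambda x_3$ term genuinely contribute only bounded errors uniformly in $\varepsilon\in(0,\pi/4]$ — but these are routine given Lemma \ref{greo} and the explicit form \eqref{gfs2} of $G_+$.
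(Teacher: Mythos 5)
Your overall strategy (Turkington-type comparison within the rearrangement class) is the right one, but the two quantitative steps that carry the whole proof are not actually established, and one of them is justified by a false claim. First, the ``absorption'' step --- the assertion that $\int_{O_v}v\,(\mathcal G_+v-\lambda x_3-\mu_v)\,d\sigma\leq C$, so that $\int_{\mathbb S^2_+}v(\mathcal G_+v-\lambda x_3)\,d\sigma\leq \kappa\mu_v+C$ --- is the crux of the proposition, and you justify it by ``the energy bound $E(v)\le M_\varepsilon\le C$.'' That bound is false: since $\varrho_\varepsilon\in\mathcal R_\varepsilon$ is supported in $B_\varepsilon(N)$, one has $M_\varepsilon\geq \mathcal E(\varrho_\varepsilon)\geq \frac{\kappa^2}{4\pi}\ln\frac{\cos\varepsilon}{\sin\varepsilon}-\lambda\kappa\to+\infty$, and likewise $E(v)\to+\infty$ (note that $-\frac{\kappa}{2\pi}\ln\varepsilon$ is large and \emph{positive}, so the proposition asserts $\mu_v\to+\infty$; this is not a mild lower bound). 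The excess estimate is genuinely nontrivial: the paper proves it in Lemma \ref{ubkv} by applying the gradient estimate of Lemma \ref{tcl} to $u=\mathcal G_+v-\lambda x_3-\mu_v-\lambda$ (which requires first showing $\mu_v>-\lambda$, Lemma \ref{mlb01}, so that $u<0$ on the equator), and the boundedness comes from the cancellation $|O_v|^{1/p'}\|v\|_{L^p}\leq C\varepsilon^{2/p'}\|\varrho_\varepsilon\|_{L^p}\leq C$ via (H3). Nothing in your write-up supplies this.

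Second, the lower bound $\int_{\mathbb S^2_+}w\,\mathcal G_+v\,d\sigma\geq -\frac{\kappa^2}{2\pi}\ln\varepsilon-C$ requires $|\mathbf x-\mathbf y|\leq C\varepsilon$ for $\mathbf x$ in the support of $w$ \emph{and} $\mathbf y$ in the support of $v$; localizing only $w$ is useless, because without information on where $v$ lives the only available bound is $\mathcal G_+v\geq 0$, which gives $L(w)\geq-\lambda\kappa$ and hence only $\mu_v\geq -C$. So the circularity you flag (localization of $v$ is Theorem \ref{thmlb}(ii), whose proof via Lemma \ref{sovc} uses this very proposition) is real, and your proposed repair does not remove it. The paper's route avoids the linear functional here entirely: it uses the \emph{quadratic} lower bound $E(v)\geq \mathcal E(\varrho_\varepsilon)+\lambda I(v)\geq-\frac{\kappa^2}{4\pi}\ln\varepsilon-C$ (Lemma \ref{lbke}), where only the self-interaction of the explicitly localized test function $\varrho_\varepsilon$ must be computed, and then converts $2E(v)-\lambda I(v)-\kappa\mu_v-\kappa\lambda\leq C$ (Lemma \ref{ubkv}) into $\kappa\mu_v\geq 2E(v)-C$. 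You should restructure your argument along these lines: energy lower bound from the test function, plus the Sobolev excess estimate, with no localization of $v$ required at this stage.
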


  To prove Proposition \ref{lvlm}, we begin with the following lower bound for the kinetic energy of the maximizers.

 \begin{lemma}\label{lbke}
 $E(v)>-\frac{\kappa^2}{4\pi}\ln\varepsilon-C$ for any $v\in\mathcal M_\varepsilon.$
 \end{lemma}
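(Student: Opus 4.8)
The plan is to derive this lower bound for $E(v)$ from a trial‑function lower bound for $M_\varepsilon$ together with the sign of the impulse. First I would note that every $v\in\mathcal M_\varepsilon$ is a rearrangement of the nonnegative function $\varrho_\varepsilon$ (by (H1)), so $v\ge 0$ a.e.\ on $\mathbb S^2_+$; since $x_3>0$ there, the impulse $I(v)=\int_{\mathbb S^2_+}x_3 v\,d\sigma$ is nonnegative. Because $v$ maximizes $\mathcal E=E-\lambda I$ over $\mathcal R_\varepsilon$ and $\lambda>0$, we get $E(v)=\mathcal E(v)+\lambda I(v)=M_\varepsilon+\lambda I(v)\ge M_\varepsilon$. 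Hence it suffices to prove $M_\varepsilon>-\tfrac{\kappa^2}{4\pi}\ln\varepsilon-C$. Note the mere positivity $E(v)\ge 0$ (Lemma \ref{greo}(iv)) is useless here, since $-\tfrac{\kappa^2}{4\pi}\ln\varepsilon\to+\infty$ as $\varepsilon\to0$; the logarithmic divergence of the bound must come from a genuine energy estimate.

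To bound $M_\varepsilon$ from below I would simply use $\varrho_\varepsilon$ itself as a trial function, since $\varrho_\varepsilon\in\mathcal R_\varepsilon$, giving $M_\varepsilon\ge\mathcal E(\varrho_\varepsilon)=E(\varrho_\varepsilon)-\lambda I(\varrho_\varepsilon)$. The impulse term is harmless: $0\le I(\varrho_\varepsilon)\le\int_{\mathbb S^2_+}\varrho_\varepsilon\,d\sigma=\kappa$ by (H2), so $-\lambda I(\varrho_\varepsilon)\ge-\lambda\kappa\ge-C$. The main point is the energy estimate
\[
E(\varrho_\varepsilon)=\frac{1}{4\pi}\int_{\mathbb S^2_+}\int_{\mathbb S^2_+}\ln\frac{|\mathbf x-\mathbf y'|}{|\mathbf x-\mathbf y|}\,\varrho_\varepsilon(\mathbf x)\varrho_\varepsilon(\mathbf y)\,d\sigma\,d\sigma\ \ge\ -\frac{\kappa^2}{4\pi}\ln\varepsilon-C.
\]
Since $\varrho_\varepsilon$ is supported in $B_\varepsilon(N)$ (by (H1)) and $\varepsilon\le\pi/4$, for $\mathbf x,\mathbf y$ in the support we have $d(\mathbf x,\mathbf y)<2\varepsilon$, hence by \eqref{gds} $|\mathbf x-\mathbf y|=2\sin(d(\mathbf x,\mathbf y)/2)<2\varepsilon$, so $-\ln|\mathbf x-\mathbf y|>-\ln(2\varepsilon)$; on the other hand $x_3,y_3>\cos\varepsilon\ge 1/\sqrt2$, so $|\mathbf x-\mathbf y'|\ge x_3+y_3>\sqrt2$, while trivially $|\mathbf x-\mathbf y'|\le 2$, whence $\bigl|\ln|\mathbf x-\mathbf y'|\bigr|\le\ln2$. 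Thus $\ln\frac{|\mathbf x-\mathbf y'|}{|\mathbf x-\mathbf y|}>-\ln\varepsilon-C'$ pointwise on the support, and integrating against $\varrho_\varepsilon\otimes\varrho_\varepsilon$, whose total mass is $\kappa^2$ by (H2), yields the displayed inequality.

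Combining the two steps gives $M_\varepsilon\ge\mathcal E(\varrho_\varepsilon)>-\tfrac{\kappa^2}{4\pi}\ln\varepsilon-C$, and therefore $E(v)\ge M_\varepsilon>-\tfrac{\kappa^2}{4\pi}\ln\varepsilon-C$ for every $v\in\mathcal M_\varepsilon$, as desired. I do not anticipate a real obstacle: the proof uses only (H1) and (H2), not (H3). The only delicate point is keeping every constant independent of $\varepsilon$ in the energy estimate, which forces one to control the ``image'' term $\ln|\mathbf x-\mathbf y'|$ uniformly — this is exactly where the reduction to $\varepsilon\le\pi/4$ is convenient, since it keeps the support in a fixed spherical cap bounded away from the equatorial plane. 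The fact that $G_+$ has the same logarithmic singularity as the planar Green function is precisely what produces the leading coefficient $\kappa^2/(4\pi)$, in parallel with Turkington's planar analysis.
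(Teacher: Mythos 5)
Your proof is correct and follows essentially the same route as the paper: use $\varrho_\varepsilon$ itself as a trial function in $(V_\varepsilon)$, dispose of the impulse terms by a uniform $O(1)$ bound, and extract the $-\frac{\kappa^2}{4\pi}\ln\varepsilon$ leading term from the geometric estimates $|\mathbf x-\mathbf y|\leq 2\sin\varepsilon$ and $|\mathbf x-\mathbf y'|\geq 2\cos\varepsilon$ on $B_\varepsilon(N)$. The only (immaterial) difference is that you drop $\lambda I(v)$ using its sign, whereas the paper simply bounds $\sup_{w\in\mathcal R_\varepsilon}|I(w)|\leq C$ and absorbs both impulse terms into the constant.
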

 
 \begin{proof}
  Since $\varrho_\varepsilon\in\mathcal R_\varepsilon,$ we have that
  \[E(v)-\lambda I(v)\geq E(\varrho_\varepsilon)-\lambda I(\varrho_\varepsilon).\] 
  It is easy to check that 
  \begin{equation}\label{efi1}
  \sup_{w\in\mathcal R_\varepsilon}|I(w)|< C.
  \end{equation}
Thus, to complete the proof, it suffices to estimate the lower bound of $E(\varrho_\varepsilon).$ We estimate as follows:
   \begin{align*}
 E(\varrho_\varepsilon)&=\frac{1}{4\pi}\int_{\mathbb S^2_+} \int_{\mathbb S^2_+} \ln\frac{|\mathbf x-\tilde{\mathbf y}|}{|\mathbf x-\mathbf y|}\varrho_\varepsilon(\mathbf x)\varrho_\varepsilon(\mathbf y)d\sigma_{\mathbf x}d\sigma_{\mathbf y}\\
 & =\frac{1}{4\pi}\int_{B_\varepsilon(N)} \int_{B_\varepsilon(N)} \ln\frac{|\mathbf x-\tilde{\mathbf y}|}{|\mathbf x-\mathbf y|}\varrho_\varepsilon(\mathbf x)\varrho_\varepsilon(\mathbf y)d\sigma_{\mathbf x}d\sigma_{\mathbf y}\\
 &\geq  \frac{\kappa^2}{4\pi}\ln\frac{\cos\varepsilon}{\sin\varepsilon}
 \\
  &>-\frac{\kappa^2}{4\pi}\ln \varepsilon -C.
   \end{align*}
  Note that in the first inequality  we have used the following geometric facts:
   \[|\mathbf x-\mathbf y|\leq 2\sin\varepsilon,\quad |\mathbf x-\tilde{\mathbf y}|\geq 2\cos\varepsilon\quad \forall\,\mathbf x,\mathbf y\in B_\varepsilon(N).\]
 \end{proof}

 \begin{lemma}\label{mlb01}
 $\mu_v> -\lambda$ for any $v\in\mathcal M_\varepsilon.$
 \end{lemma}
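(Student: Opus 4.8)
The plan is to exploit the fact that the vortex core $O_v$ has small measure (comparable to $|B_\varepsilon(N)|$, hence comparable to $\varepsilon^2$), so that the competing term $\lambda x_3$ in the Lagrange characterization cannot be dominated by the "sub-level" behavior unless $\mu_v$ is bounded below by $-\lambda$. More concretely, I would argue by contradiction: suppose $\mu_v \le -\lambda$ for some $v \in \mathcal M_\varepsilon$. The goal is to show this forces the super-level set $\{\mathcal G_+v - \lambda x_3 > \mu_v\}$ to be too large (it must contain, up to measure zero, essentially all points where $\mathcal G_+v > \mu_v + \lambda x_3$, and since $\mathcal G_+v \ge 0$ on $\mathbb S^2_+$ while $\lambda x_3 \le \lambda$, the condition $\mathcal G_+v - \lambda x_3 > \mu_v$ would be implied merely by $\mathcal G_+v \ge 0$ combined with $\mu_v \le -\lambda \le \lambda x_3 - \text{something}$... ). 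The cleaner route: note $\mathcal G_+ v \ge 0$ everywhere and $\lambda x_3 < \lambda$ on $\mathbb S^2_+$, so $\mathcal G_+v - \lambda x_3 > -\lambda \ge \mu_v$ would hold on a set of full measure, forcing $v > 0$ a.e. on $\mathbb S^2_+$ by Lemma \ref{sce01}. But $v \in \mathcal R_\varepsilon$ means $v$ vanishes outside a set of measure $|B_\varepsilon(N)| = 2\pi(1-\cos\varepsilon) < |\mathbb S^2_+| = 2\pi$, a contradiction for $\varepsilon \le \pi/4$.

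Let me record the steps in order. First, recall from Lemma \ref{greo}(iv) and \eqref{ine04} that $\mathcal G_+ v \ge 0$ on $\mathbb S^2_+$ whenever $v \ge 0$; since every $v \in \mathcal M_\varepsilon$ lies in $\mathcal R_\varepsilon$ and $\varrho_\varepsilon$ is nonnegative by (H1), we have $v \ge 0$ and hence $\mathcal G_+ v \ge 0$ on $\mathbb S^2_+$ (in fact $\mathcal G_+v > 0$ in the interior by the strong maximum principle, but $\ge 0$ suffices). Second, on $\mathbb S^2_+$ we have $0 < x_3 < 1$, so $\lambda x_3 < \lambda$ for $\lambda > 0$. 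Third, suppose for contradiction $\mu_v \le -\lambda$. Then for a.e.\ $\mathbf x \in \mathbb S^2_+$,
\[
\mathcal G_+v(\mathbf x) - \lambda x_3 \ge 0 - \lambda x_3 > -\lambda \ge \mu_v,
\]
so $\mathbf x$ belongs to the super-level set $O_v = \{\mathcal G_+v - \lambda x_3 > \mu_v\}$. By Lemma \ref{sce01}, $v > 0$ a.e.\ on $\mathbb S^2_+$, i.e.\ $|\{v > 0\}| = |\mathbb S^2_+| = 2\pi$. But since $v \in \mathcal R_\varepsilon$, the distribution function of $v$ equals that of $\varrho_\varepsilon$, and by (H1) $\varrho_\varepsilon = 0$ a.e.\ on $\mathbb S^2_+ \setminus B_\varepsilon(N)$, whence $|\{v > 0\}| \le |B_\varepsilon(N)| = 2\pi(1-\cos\varepsilon) < 2\pi$ for $\varepsilon \in (0, \pi/4]$. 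This contradiction proves $\mu_v > -\lambda$.

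The argument is essentially a one-line consequence of positivity of $\mathcal G_+$, the strict inequality $x_3 < 1$ on the open hemisphere, and the smallness of the support of $\varrho_\varepsilon$. The only point requiring a little care is ensuring that $O_v$ as defined (via the strict inequality $> \mu_v$) genuinely coincides with $\{v > 0\}$ up to measure zero — but this is exactly the content of Lemma \ref{sce01}, which handles the measure-zero issue on the level set $\{\mathcal G_+v - \lambda x_3 = \mu_v\}$ via \eqref{muv03}. There is no substantive obstacle here; the lemma is a warm-up bound that will be superseded by the sharper estimate $\mu_v > -\tfrac{\kappa}{2\pi}\ln\varepsilon - C$ in Proposition \ref{lvlm}, for which the genuine work (the energy lower bound of Lemma \ref{lbke} combined with an upper bound on $\int v\,\mathcal G_+v$ in terms of $\mu_v$ and the mass $\kappa$) is still to come.
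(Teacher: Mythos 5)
Your proposal is correct and follows essentially the same route as the paper: assume $\mu_v\le-\lambda$, use the nonnegativity of $\mathcal G_+v$ to conclude via Lemma \ref{sce01} that $O_v$ exhausts $\mathbb S^2_+$, and contradict $|O_v|=|B_\varepsilon(N)|<|\mathbb S^2_+|$. The only cosmetic difference is where the strict inequality comes from (you use $x_3<1$ a.e., the paper uses $\mathcal G_+v>0$), and note that the pointwise bound $\mathcal G_+v\ge 0$ follows from the nonnegativity of the Green function $G_+$ (or the maximum principle) rather than from Lemma \ref{greo}(iv), which is positivity in the quadratic-form sense.
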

\begin{proof}
Suppose by contradiction that $\mu_v\leq -\lambda$.
 Then for any $\mathbf x\in\mathbb S^2_+,$ it holds that
   \[\mathcal G_+v(\mathbf x)-\lambda x_3\geq \mathcal G_+v(\mathbf x)-\lambda  >-\lambda \geq \mu_v,\]
   which in combination with Lemma \ref{sce01} implies that \begin{equation}\label{sub1}
   \mathbb S^2_+\subset O_v.
   \end{equation}
   On the other hand, since $v\in\mathcal R_\varepsilon,$
   we have that
   \[|O_v|=\left|\left\{\mathbf x\in\mathbb S^2_+\mid v(\mathbf x)>0\right\}\right|=\left|\left\{\mathbf x\in\mathbb S^2_+\mid \varrho_\varepsilon(\mathbf x)>0\right\}\right|=|B_\varepsilon(N)|<|\mathbb S^2_+|,\]
   a contradiction to \eqref{sub1}.
 \end{proof}

 \begin{lemma}\label{ubkv}
 It holds that
 \[\int_{\mathbb S^2_+}v(\mathcal G_+v-\lambda x_3-\mu_v-\lambda)d\sigma\leq  C \quad\forall\,v\in\mathcal M_\varepsilon.\]
 \end{lemma}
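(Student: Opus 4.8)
The plan is to test the weak formulation \eqref{wsf1} of the rotating Euler equation against a well-chosen multiplier, or more directly, to exploit the fact that $v$ is a critical point of $\mathcal E$ on the rearrangement class $\mathcal R_\varepsilon$ together with the profile relation \eqref{l361}. The key observation is that $v=\phi_v(\psi_v)$ where $\psi_v:=\mathcal G_+v-\lambda x_3$, and by Lemma \ref{sce01} the set $\{v>0\}=\{\psi_v>\mu_v\}=O_v$, so on $O_v$ we have $\psi_v-\mu_v>0$ while off $O_v$ we have $v=0$. Thus the integrand $v(\psi_v-\mu_v-\lambda)=v(\psi_v-\mu_v)-\lambda v$ is supported on $O_v$, and since by Lemma \ref{mlb01} we have $\mu_v>-\lambda$, i.e. $-\mu_v-\lambda<0$, the term $-\lambda\int v\,d\sigma=-\lambda\kappa$ is a harmless constant; so it suffices to bound $\int_{O_v}v(\psi_v-\mu_v)\,d\sigma$ from above by $C$ (plus constants).

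The main step is an integration-by-parts / energy estimate on $O_v$. Consider the function $u:=\psi_v-\mu_v=\mathcal G_+v-\lambda x_3-\mu_v$ on $\mathbb S^2_+$. By elliptic regularity (Lemma \ref{greo}(i)) and Sobolev embedding, $u\in W^{2,p}(\mathbb S^2_+)$, and $u=-\lambda x_3-\mu_v<0$ on $\partial\mathbb S^2_+$ precisely because $\mu_v>-\lambda$ (Lemma \ref{mlb01}) and $x_3=0$ there — so Lemma \ref{tcl} applies with $U=O_v=\{u>0\}$. Now, on $O_v$ we have $-\Delta u=-\Delta\mathcal G_+v+\lambda\Delta x_3=v-2\lambda x_3$ using \eqref{ine04} and \eqref{lx3}. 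Integrating by parts and using that $u^+\in W_0^{1,\cdot}$,
\[
\int_{O_v}v\,u\,d\sigma=\int_{O_v}(-\Delta u+2\lambda x_3)u\,d\sigma=\int_{O_v}|\nabla u|^2 d\sigma+2\lambda\int_{O_v}x_3 u\,d\sigma.
\]
The second term is bounded by $2\lambda\|u\|_{L^1(O_v)}$, which is controlled once we know $\|\nabla u\|_{L^2(O_v)}$ and $|O_v|$ are small (Poincaré on $O_v$, or the embedding estimate inside the proof of Lemma \ref{tcl}). The first term is $\|\nabla u\|_{L^2(O_v)}^2$, and here is where Lemma \ref{tcl} does the work: $\|\nabla u\|_{L^2(O_v)}\le C\|\Delta u\|_{L^p(O_v)}|O_v|^{1/p'}=C\|v-2\lambda x_3\|_{L^p(O_v)}|O_v|^{1/p'}\le C(\|v\|_{L^p}+C)|O_v|^{1/p'}$. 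Since $v\in\mathcal R_\varepsilon$, $\|v\|_{L^p(\mathbb S^2_+)}=\|\varrho_\varepsilon\|_{L^p(\mathbb S^2_+)}\le K\varepsilon^{-2/p'}$ by (H3), and $|O_v|=|B_\varepsilon(N)|=2\pi(1-\cos\varepsilon)\le C\varepsilon^2$ by \eqref{sagd}, hence $|O_v|^{1/p'}\le C\varepsilon^{2/p'}$. Multiplying, $\|\nabla u\|_{L^2(O_v)}\le C(K\varepsilon^{-2/p'}+C)\varepsilon^{2/p'}\le C$, so $\|\nabla u\|_{L^2(O_v)}^2\le C$, and similarly the lower-order term is $\le C$. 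Combining, $\int_{O_v}v(\psi_v-\mu_v)\,d\sigma\le C$, and adding the constant $-\lambda\kappa$ gives the claim.

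The main obstacle — and the reason the hypotheses (H3) and the smallness $|O_v|\sim\varepsilon^2$ are both needed — is the interplay of the two diverging/vanishing factors: $\|v\|_{L^p}$ blows up like $\varepsilon^{-2/p'}$ while $|O_v|^{1/p'}$ vanishes like $\varepsilon^{2/p'}$, and it is exactly (H3) that makes the product bounded. One should double-check the boundary condition hypothesis of Lemma \ref{tcl}: it requires $u<0$ on all of $\partial\mathbb S^2_+$, which holds since $\mu_v>-\lambda$ is \emph{strict} (Lemma \ref{mlb01}); if one only had $\mu_v\ge-\lambda$ the argument would need a limiting/approximation step. A secondary point requiring a line of care is justifying the integration by parts $\int_{O_v}(-\Delta u)u\,d\sigma=\int_{\mathbb S^2_+}\nabla u^+\cdot\nabla u\,d\sigma$ for $u\in W^{2,p}$ with $u^+\in W_0^{1,r}$, $r>2$ — but this is precisely the computation \eqref{e201} already carried out in the proof of Lemma \ref{tcl}, so it can be invoked verbatim.
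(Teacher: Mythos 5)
Your overall strategy (integration by parts on the positive set of a shifted stream function, then Lemma \ref{tcl} combined with (H3) and $|O_v|=|B_\varepsilon(N)|\le C\varepsilon^2$ to kill the product $\|v\|_{L^p}|O_v|^{1/p'}$) is exactly the paper's, and that part of your argument is sound. However, there is a genuine gap at the step you yourself flagged as delicate: you take $u:=\mathcal G_+v-\lambda x_3-\mu_v$ and claim $u<0$ on $\partial\mathbb S^2_+$ ``because $\mu_v>-\lambda$ and $x_3=0$ there.'' On the boundary $\mathcal G_+v=0$ and $x_3=0$, so $u=-\mu_v$, and $-\mu_v<0$ requires $\mu_v>0$ --- which Lemma \ref{mlb01} does \emph{not} give (it only gives $\mu_v>-\lambda$), and which cannot be imported from Proposition \ref{lvlm} since that proposition is proved \emph{using} the present lemma. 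If $\mu_v\le 0$, then $u^+$ need not lie in $W_0^{1,r}(\mathbb S^2_+)$, the integration by parts acquires an uncontrolled boundary term, and the hypothesis of Lemma \ref{tcl} fails; so as written the proof does not close.

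The repair is exactly the reason the statement carries the extra ``$-\lambda$'': set $u:=\mathcal G_+v-\lambda x_3-\mu_v-\lambda$ (the paper's choice). Then on $\partial\mathbb S^2_+$ one has $u=-\mu_v-\lambda<0$ precisely by Lemma \ref{mlb01}, the set $U:=\{u>0\}=\{\mathcal G_+v-\lambda x_3>\mu_v+\lambda\}$ is contained in $O_v$ (here $\lambda>0$ is used), and
\[
\int_{\mathbb S^2_+}uv\,d\sigma\le\int_{\mathbb S^2_+}u^+v\,d\sigma=\int_{\mathbb S^2_+}|\nabla u^+|^2d\sigma+2\lambda\int_{\mathbb S^2_+}x_3u^+d\sigma,
\]
after which your estimates via Lemma \ref{tcl}, (H3), Poincar\'e's inequality and $|U|\le|O_v|\le C\varepsilon^2$ go through verbatim and bound the left-hand side of the lemma directly, with no need for the preliminary splitting off of $-\lambda\kappa$. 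With this one-line correction your argument coincides with the paper's proof.
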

\begin{proof}
 Denote $u:=\mathcal G_+v-\lambda x_3-\mu_v-\lambda$. Then $u\in W^{2,p}(\mathbb S^2_+)$ and
 \[-\Delta u=v-2\lambda x_3,\]
 where we have used \eqref{lx3}.
  Moreover, in view of Lemma \ref{mlb01},  it holds that 
 \[u<0\,\, \mbox{ on }\,\,\partial \mathbb S^2_+.\]
 Denote 
 \[U:=\left\{\mathbf x\in\mathbb S^2_+\mid u(\mathbf x)>0\right\}.\]
 Then it is clear that 
  \begin{equation}\label{sub02}
  U\subset \left\{\mathbf x\in\mathbb S^2_+\mid \mathcal G_+v(\mathbf x)-\lambda x_3>\mu_v \right\}=O_v.
  \end{equation}
By integration by parts,
\begin{align*}
\int_{\mathbb S^2_+}uvd\sigma&\leq\int_{\mathbb S^2_+}u^+vd\sigma \\
&=\int_{\mathbb S^2_+}u^+(-\Delta u+2\lambda x_3)d\sigma  \\
&=\int_{\mathbb S^2_+}\nabla u\cdot\nabla u^+ +2\lambda  x_3u^+ d\sigma  \\
&=\int_{\mathbb S^2_+}|\nabla u^+|^2d\sigma+2\lambda \int_{\mathbb S^2_+} x_3u^+d\sigma\\
&\leq \int_{\mathbb S^2_+}|\nabla u^+|^2d\sigma+ C \int_{\mathbb S^2_+} |u^+|^2d\sigma\\
&\leq  \|\nabla u^+\|^2_{L^2(\mathbb S^2_+)}+C\|\nabla u^+\|_{L^2(\mathbb S^2_+)}\\
&=  \|\nabla u\|^2_{L^2(U)}+C\|\nabla u\|_{L^2(U)}.
\end{align*}
Note that we have used the Poincar\'e inequality in the last inequality.
On the other hand, using  Lemma \ref{tcl}, we can estimate $ \|\nabla u\|_{L^2(U)}$ as follows:
   \begin{align*}
    \int_{U}|\nabla u|^2d\sigma     \leq  &C \|\Delta u \|_{L^{p}(U)}  |U|^{1/p'}\\
     \leq &C\|v-2\lambda x_3\|_{L^{p}(\mathbb S^2_+)} |O_v|^{1/p'}  \\
     \leq& C|O_v|^{1/p'} \left(\|v\|_{L^{p}(\mathbb S^2_+)}+2\lambda \|x_3\|_{L^{p}(\mathbb S^2_+)}\right) \\
    \leq & C\varepsilon^{2/p'} \left(\|\varrho_\varepsilon\|_{L^{p}(\mathbb S^2_+)}+C\right) \\
     \leq  & C,
   \end{align*}
  where we have used  \eqref{sub02} and (H3). Thus the proof has been completed.

\end{proof}

Having proved Lemmas \ref{lbke} and \ref{ubkv},  we are ready to prove Proposition \ref{lvlm}.
\begin{proof}[Proof of Proposition \ref{lvlm}]
Notice that
\[\int_{\mathbb S^2_+}v(\mathcal G_+v-\lambda x_3-\mu_v-\lambda)d\sigma=2E(v)-\lambda I(v)-\kappa \mu_v-\kappa\lambda.\]
  Taking into account \eqref{efi1} and Lemma \ref{ubkv}, we deduce that
  \[ \kappa\mu_v\geq  2E(v)-C.\]
Then the desired estimate for $\mu_v$ is an easy consequence of  Lemma \ref{lbke}.
\end{proof}

   \subsection{Size of vortex core}

In this subsection, we give the proof of Theorem \ref{thmlb}(ii). We begin with the following estimate on the diameter of the vortex core.

 \begin{lemma}\label{sovc}
For any $v\in\mathcal M_\varepsilon$, it holds that
\begin{equation}\label{sovc1}
d(\mathbf x,\mathbf y)\leq C\varepsilon\quad \forall\,\mathbf x,\mathbf y\in O_v,
\end{equation}
where $d(\mathbf x,\mathbf y)$ the geodesic distance 
between $\mathbf x$ and $\mathbf y$ (cf. \eqref{gds}).
  \end{lemma}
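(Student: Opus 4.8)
The plan is to derive the diameter bound \eqref{sovc1} from a Pohozaev-type / energy comparison argument, exploiting that the vortex core $O_v$ carries all the vorticity mass $\kappa$, has small area ($|O_v|=|B_\varepsilon(N)|\sim\pi\varepsilon^2$ by \eqref{sagd} and the rearrangement constraint $v\in\mathcal R_\varepsilon$), and yet must produce enough kinetic energy to satisfy the lower bound $E(v)>-\frac{\kappa^2}{4\pi}\ln\varepsilon-C$ from Lemma \ref{lbke}. The key point is that if $O_v$ were spread over a set of diameter much larger than $\varepsilon$, the logarithmic Green function $G_+(\mathbf x,\mathbf y)=\frac{1}{2\pi}\ln\frac{|\mathbf x-\mathbf y'|}{|\mathbf x-\mathbf y|}$ could not generate that much self-energy, because on the sphere (just as in the plane) the logarithmic kernel is, for a fixed mass confined to a set of diameter $d$ and fixed small area $a$, maximized in size $\sim\frac{\kappa^2}{4\pi}\ln\frac{1}{d}$ plus lower order. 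So a diameter $d\gg\varepsilon$ would force $E(v)\le -\frac{\kappa^2}{4\pi}\ln d + C < -\frac{\kappa^2}{4\pi}\ln\varepsilon - C$ for $\varepsilon$ small, contradicting Lemma \ref{lbke}.

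Concretely, first I would set $u:=\mathcal G_+v-\lambda x_3-\mu_v$, so that by Lemma \ref{sce01} the core is $O_v=\{u>0\}$, and $-\Delta u = v - 2\lambda x_3$ on $\mathbb S^2_+$ with $u<0$ on $\partial\mathbb S^2_+$ (using Lemma \ref{mlb01}). Integrating by parts as in the proof of Lemma \ref{ubkv} gives $\int_{\mathbb S^2_+}u^+v\,d\sigma = \|\nabla u^+\|_{L^2}^2 + 2\lambda\int x_3 u^+\,d\sigma$, and combined with the gradient estimate $\|\nabla u\|_{L^2(O_v)}\le C\varepsilon^{2/p'}\|\varrho_\varepsilon\|_{L^p}\le C$ from Lemma \ref{tcl} and (H3), this yields a uniform bound $\int_{\mathbb S^2_+}u^+v\,d\sigma\le C$, i.e. $\int_{O_v}(\mathcal G_+v-\lambda x_3)v\,d\sigma\le \kappa\mu_v + C$, and hence $2E(v)\le \kappa\mu_v + C$. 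Now I want an \emph{upper} bound on $\mu_v$ of the shape $\mu_v\le -\frac{\kappa}{2\pi}\ln(\text{diam }O_v)+C$; combining with $2E(v)\le\kappa\mu_v+C$ and Lemma \ref{lbke} would then give $-\frac{\kappa^2}{2\pi}\ln\varepsilon-C\le 2E(v)\le -\frac{\kappa^2}{2\pi}\ln(\mathrm{diam}\,O_v)+C$, whence $\mathrm{diam}\,O_v\le C\varepsilon$.

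To get the upper bound on $\mu_v$, I would argue pointwise. Pick $\mathbf x_0\in O_v$ realizing (nearly) the maximum of $\mathcal G_+v-\lambda x_3$. Since $v\ge 0$ and $\int_{\mathbb S^2_+}v\,d\sigma=\kappa$, and $G_+(\mathbf x_0,\mathbf y)\le \frac{1}{2\pi}\ln\frac{2}{|\mathbf x_0-\mathbf y|}$, we get $\mathcal G_+v(\mathbf x_0)\le \frac{1}{2\pi}\int_{O_v}\ln\frac{2}{|\mathbf x_0-\mathbf y|}v(\mathbf y)\,d\sigma$; bounding $v\le \|\varrho_\varepsilon\|_{L^\infty}$ is not available for general $\varrho_\varepsilon$, so instead I use Hölder together with (H3): $\int_{O_v}\ln\frac{2}{|\mathbf x_0-\mathbf y|}v\,d\sigma\le \|v\|_{L^p}\|\ln\frac{2}{|\mathbf x_0-\cdot|}\|_{L^{p'}(O_v)}$, and the $L^{p'}$ norm of the logarithm over a set of area $\le C\varepsilon^2$ is $\le C\varepsilon^{2/p'}(1+\ln\frac1\varepsilon)\cdot(\text{something})$ — this needs care but is a standard rearrangement-of-the-log estimate, bounded by $C\varepsilon^{2/p'}\ln(1/\varepsilon)$, which with $\|v\|_{L^p}=\|\varrho_\varepsilon\|_{L^p}\le K\varepsilon^{-2/p'}$ gives $\mathcal G_+v(\mathbf x_0)\le C\ln(1/\varepsilon)$. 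That is the wrong direction for a lower bound on the diameter, so the correct route is instead: on $O_v$ we have $\mathcal G_+v-\lambda x_3>\mu_v$, so integrating against $v$, $\int_{O_v}(\mathcal G_+v)v\,d\sigma>\mu_v\kappa+\lambda\int_{O_v}x_3 v\,d\sigma\ge\mu_v\kappa+C^{-1}\kappa$, i.e. $2E(v)\ge\kappa\mu_v+c\kappa$; hence $\mu_v\le \frac{2}{\kappa}E(v)+C$. Thus the \emph{real} content is: upper bound $E(v)$ by $\frac{\kappa^2}{4\pi}\ln\frac{1}{\mathrm{diam}\,O_v}+C$. For this, since $v\in\mathcal R_\varepsilon$ has $\int v=\kappa$ and $|O_v|\le C\varepsilon^2$, and $G_+(\mathbf x,\mathbf y)\le\frac{1}{2\pi}\ln\frac{C}{|\mathbf x-\mathbf y|}$, I bound $2E(v)=\int\!\!\int G_+ v v\le \frac{1}{2\pi}\int\!\!\int_{O_v\times O_v}\ln\frac{C}{|\mathbf x-\mathbf y|}v v$; replacing $v$ by its decreasing rearrangement and using the Riesz-type inequality for the log kernel on the sphere (or just: $\int\!\!\int\ln\frac{C}{|\mathbf x-\mathbf y|}vv\le\kappa^2\ln C - \kappa^2\,\langle\ln|\mathbf x-\mathbf y|\rangle$ with the average distance controlled below by the smallness of the area) gives $2E(v)\le \frac{\kappa^2}{2\pi}\ln\frac{C}{\ell}$ where $\ell$ is a lengthscale controlling how concentrated $v$ must be given its area — but this produces $\ell\sim\varepsilon$, not $\mathrm{diam}\,O_v$.

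The clean fix, and the step I expect to be the main obstacle, is therefore to relate $\mathrm{diam}\,O_v$ to $\varepsilon$ through a \emph{localization} argument rather than through energy alone: show that if $O_v$ had two far-apart pieces (distance $\gg\varepsilon$), one could strictly increase $\mathcal E$ by a rearrangement that moves mass together, contradicting maximality — or, following Turkington's method, use that $\mathcal G_+v-\lambda x_3$ attains a strict maximum and that $O_v$ is a superlevel set of a function whose oscillation on $O_v$ is controlled by $\|\nabla u\|_{L^2(O_v)}\le C$ and a Sobolev/Morrey estimate, forcing $O_v$ connected and of diameter $\le C\varepsilon$. Precisely: on $O_v$, $u=\mathcal G_+v-\lambda x_3-\mu_v>0$ and $u=0$ on $\partial O_v$ (up to measure zero), and $u\in W^{2,p}\hookrightarrow C^{1,\alpha}$ with $\|u\|_{C^{0,\alpha}(\mathbb S^2_+)}\le C\|u\|_{W^{2,p}}\le C(\|v\|_{L^p}+C)\le C\varepsilon^{-2/p'}$; but also, for any $\mathbf x,\mathbf y\in O_v$ the value $u(\mathbf x)$ satisfies $u(\mathbf x)=\mathcal G_+v(\mathbf x)-\lambda x_3-\mu_v\le \sup\mathcal G_+v - \mu_v$, and one shows $\sup_{O_v}u\le C$ using the integrated bound $\int_{O_v}u v\le C$ from Lemma \ref{ubkv} together with $\int_{O_v}v=\kappa$ (so $u$ cannot be everywhere large on the bulk of the mass) and the Hölder continuity to upgrade to a pointwise bound off a small set. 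Then $u$ is a nonnegative $C^{0,\alpha}$ function vanishing on $\partial O_v$, with $\sup u\le C$ and $\|\nabla u\|_{L^2(O_v)}\le C$ and $|O_v|\le C\varepsilon^2$: a capacity/isoperimetric argument (if $\mathbf x,\mathbf y\in O_v$ with $d(\mathbf x,\mathbf y)=d$, then the annular region between geodesic circles around $\mathbf x$ must meet $\partial O_v$ unless $O_v$ fills it, so $|O_v|\gtrsim d^2$ or a path estimate $\int_\gamma|\nabla u|\gtrsim u(\mathbf x_0)$ along many parallel geodesics forces $\|\nabla u\|_{L^2}^2\gtrsim u(\mathbf x_0)^2/\ln(d/\varepsilon)$...) yields $d\le C\varepsilon$. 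I would carry this out carefully; the subtle point throughout is passing between the $L^2$ gradient bound and pointwise control, which is exactly where the two-dimensionality (borderline Sobolev) and the logarithm conspire, and where Turkington's original planar argument must be adapted to the sphere's metric — aided, as the authors note, by $G_+$ having the same logarithmic singularity as in the plane.
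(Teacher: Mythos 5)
Your proposal does not close the argument, and you essentially say so yourself: the energy comparison only detects the scale $\varepsilon$ coming from the area constraint $|O_v|=|B_\varepsilon(N)|$, not the diameter of $O_v$, and the ``clean fix'' you defer to is precisely where the sketch breaks down. The capacity estimate goes the wrong way: for a function with $u\geq h$ near a core point and $u=0$ on $\partial O_v$, two-dimensional capacity gives $\|\nabla u\|_{L^2}^2\gtrsim h^2/\ln(d/\varepsilon)$, so a uniform bound on $\|\nabla u\|_{L^2(O_v)}$ is perfectly compatible with $d\gg\varepsilon$ (it only yields a \emph{lower} bound on $d$). The isoperimetric alternative $|O_v|\gtrsim d^2$ is false, since a thin tube of length $d$ and area $\varepsilon^2$ is not excluded; and the ``move mass together'' rearrangement would need a Riesz-type inequality on the sphere, which the paper explicitly notes is unavailable. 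So the localization step --- the entire content of the lemma --- is missing.

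The paper's actual proof is the pointwise version of the idea you flirt with when you write $\mu_v\leq-\frac{\kappa}{2\pi}\ln(\mathrm{diam}\,O_v)+C$, but run in the correct direction. By Proposition \ref{lvlm}, \emph{every} $\mathbf x\in O_v$ satisfies $\mathcal G_+v(\mathbf x)-\lambda x_3>\mu_v>-\frac{\kappa}{2\pi}\ln\varepsilon-C$, which (since $|\mathbf x-\mathbf y'|\leq 2$) rewrites as $\int_{\mathbb S^2_+}\ln\frac{\varepsilon}{|\mathbf x-\mathbf y|}\,v(\mathbf y)\,d\sigma>-C$. Splitting this integral at the scale $R\varepsilon$: the near-field part is bounded above by a constant via H\"older, (H3) and an explicit spherical-coordinate computation showing $\big\|\ln\frac{\varepsilon}{|\mathbf x-\cdot|}\big\|_{L^{p'}(B_{2\arcsin(\varepsilon/2)}(\mathbf x))}$ is of order $\varepsilon^{2/p'}$, while the far-field part is at most $-\ln R\int_{|\mathbf x-\mathbf y|\geq R\varepsilon}v\,d\sigma$. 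Hence $\int_{|\mathbf x-\mathbf y|\geq R\varepsilon}v\,d\sigma\leq C/\ln R<\kappa/3$ for a fixed large $R$, so at least $2\kappa/3$ of the total mass $\kappa$ lies within chordal distance $R\varepsilon$ of every point of $O_v$; two core points far enough apart that these neighborhoods are disjoint would carry total mass exceeding $4\kappa/3>\kappa$, a contradiction. This ``each core point sees most of the vorticity mass nearby'' step, driven by the lower bound on the Lagrange multiplier rather than by energy or capacity, is the idea your write-up lacks.
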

 \begin{proof}
 Fix $\mathbf x\in  O_v$.  Then by Proposition \ref{lvlm},
 \[\mathcal G_+v(\mathbf x)-\lambda x_3>\mu_v>-\frac{\kappa}{2\pi}\ln\varepsilon-C,\]
which can be written as
\begin{equation}\label{gmc0}
 \int_{\mathbb S^2_+}\ln\frac{\varepsilon}{|\mathbf x-\mathbf y|}v(\mathbf y)d\sigma >-\int_{\mathbb S^2_+}\ln |\mathbf x-\mathbf y'|v(\mathbf y) d\sigma -C.
\end{equation}
  Since $|\mathbf x-\mathbf y'|\leq 2$ for any $\mathbf y\in\mathbb S^2_+,$ we get from \eqref{gmc0} that
  \begin{equation}\label{gmc1}
  \int_{\mathbb S^2_+}\ln\frac{\varepsilon}{|\mathbf x-\mathbf y|}v(\mathbf y)d\sigma >-C.
  \end{equation}
Write
 \begin{equation}\label{gmc2}
  \int_{\mathbb S^2_+}\ln\frac{\varepsilon}{|\mathbf x-\mathbf y|}v(\mathbf y)d\sigma=\int_{\mathbf y\in\mathbb S^2_+,\,|\mathbf x-\mathbf y|\geq  R\varepsilon}\ln\frac{\varepsilon}{|\mathbf x-\mathbf y|}v(\mathbf y)d\sigma+\int_{\mathbf y\in\mathbb S^2_+,\,|\mathbf x-\mathbf y|<  R\varepsilon}\ln\frac{\varepsilon}{|\mathbf x-\mathbf y|}v(\mathbf y)d\sigma,
  \end{equation}
 where $R>1$ will be determined later.
 For the first term on the right-hand side of \eqref{gmc2}, we have that
  \begin{equation}\label{gmc3}
  \int_{\mathbf y\in\mathbb S^2_+,\,|\mathbf x-\mathbf y|\geq   R\varepsilon}\ln\frac{\varepsilon}{|\mathbf x-\mathbf y|}v(\mathbf y)d\sigma \leq -\ln R\int_{\mathbf y\in\mathbb S^2_+,\,|\mathbf x-\mathbf y|\geq  R\varepsilon} v(\mathbf y)d\sigma.
  \end{equation}
 For the second term on the right-hand side of \eqref{gmc2}, by H\"older's inequality,
 \begin{align*}
 \int_{\mathbf y\in\mathbb S^2_+,\,|\mathbf x-\mathbf y|<  R\varepsilon}\ln\frac{\varepsilon}{|\mathbf x-\mathbf y|}v(\mathbf y)d\sigma &\leq \int_{\mathbf y\in\mathbb S^2_+,\,|\mathbf x-\mathbf y|< \varepsilon}\ln\frac{\varepsilon}{|\mathbf x-\mathbf y|}v(\mathbf y)d\sigma \\
 &\leq \|v\|_{L^p(\mathbb S^2_+)}\left(\int_{\mathbf y\in\mathbb S^2,\,|\mathbf x-\mathbf y|<\varepsilon}\bigg| \ln\frac{\varepsilon}{|\mathbf x-\mathbf y|}\bigg|^{p'}d\sigma\right)^{1/p'}\\
 &= \|v\|_{L^p(\mathbb S^2_+)}\bigg\| \ln\frac{\varepsilon}{|\mathbf x-\mathbf y|}\bigg\|_{L^{p'}(B_{2\arcsin(\varepsilon/2)}(\mathbf x))}.
 \end{align*}
The logarithmic integral involved above can be computed explicitly using spherical coordinates:
 \begin{align*}
 \int_{B_{2\arcsin(\varepsilon/2)}(\mathbf x) }\bigg| \ln\frac{\varepsilon}{|\mathbf x-\mathbf y|}\bigg|^{p'}d\sigma&=\int_{B_{2\arcsin(\varepsilon/2)}(N) }\bigg| \ln\frac{\varepsilon}{|\mathbf y-N|}\bigg|^{p'}d\sigma
 \\
 &=2\pi\int_{\pi/2-2\arcsin(\varepsilon/2)}^{ \pi/2}
 \left|\ln\frac{\varepsilon}{2\sin\left(\frac{\pi}{4}-\frac{\theta}{2}\right)}\right|^{p'}\cos\theta d\theta\\
 &=2\pi\int_{0}^{ 2\arcsin(\varepsilon/2)}
 \left|\ln\frac{\varepsilon}{2\sin\left(\frac{\vartheta}{2}\right)}\right|^{p'}\sin\vartheta d\vartheta\quad\quad\left(\vartheta:=\frac{\pi}{2}-\theta\right)\\
 &=2\pi\int_0^\varepsilon \left|\ln\frac{\varepsilon}{ s}\right|^{p'}sds\quad\quad\left(s:=2\sin\left(\frac{\vartheta}{2}\right)\right)\\
 &=2\pi\varepsilon^2 \int_0^1|\ln s|^{p'}sds. 
 \end{align*}
 Hence the second term is uniformly bounded from above:
 \begin{equation}\label{gmc4}
 \int_{\mathbf y\in\mathbb S^2_+,\,|\mathbf x-\mathbf y|< R\varepsilon}\ln\frac{\varepsilon}{|\mathbf x-\mathbf y|}v(\mathbf y)d\sigma \leq C\varepsilon^{2/{p'}}\|v\|_{L^p(\mathbb S^2_+)}\leq C.
 \end{equation}
From \eqref{gmc1}-\eqref{gmc4}, we get
 \[\int_{\mathbf y\in\mathbb S^2_+,\,|\mathbf x-\mathbf y|\geq   R\varepsilon} v(\mathbf y)d\sigma \leq \frac{C}{\ln R}.\]
 Choosing $R$ large enough such that 
 \[\frac{C}{\ln R}<\frac{1}{3}\kappa,
 \] 
 and using the fact that $\int_{\mathbb S^2_+}v d\sigma=\kappa$, we get
  \begin{equation}\label{toth}
  \int_{\mathbf y\in\mathbb S^2_+,\,|\mathbf x-\mathbf y|<  R\varepsilon} v(\mathbf y)d\sigma >\frac{2}{3}\kappa.
  \end{equation}
  Since \eqref{toth} holds for arbitrary $\mathbf x\in O_v,$ we get
   \begin{equation}\label{arsin}
   d(\mathbf x,\mathbf y)\leq 4\arcsin\left(\frac{R\varepsilon}{2}\right)\quad \forall\,\mathbf x,\mathbf y\in O_v.
   \end{equation}
   In fact, if \eqref{arsin} is false, then we can take two points $\mathbf x_1,\mathbf x_2\in O_v$ such that 
   \[\{\mathbf y\in\mathbb S^2_+\mid |\mathbf x_1-\mathbf y|< R\varepsilon\}\cap\{\mathbf y\in\mathbb S^2_+\mid |\mathbf x_2-\mathbf y|< R\varepsilon\}=\varnothing. \]
   Then  by \eqref{toth},
  \[ \int_{\mathbb S^2_+}v d\sigma\geq  \int_{\mathbf y\in\mathbb S^2_+,\,|\mathbf x_1-\mathbf y|<  R\varepsilon}v(\mathbf y) d\sigma + \int_{\mathbf y\in\mathbb S^2_+,\,|\mathbf x_2-\mathbf y|<  R\varepsilon}v(\mathbf y) d\sigma >\frac{4}{3}\kappa\]
a contradiction to  $\int_{\mathbb S^2_+}v d\sigma=\kappa$.
The desired estimate \eqref{sovc1} then  follows from \eqref{arsin}.
 \end{proof}

 Recall the definition of mass center $\mathbf X_v$ of $v\in\mathcal M_\varepsilon$, i.e.,
   \[\mathbf X_v:=\frac{1}{\kappa}\int_{\mathbb S^2_+}\mathbf xv d\sigma.\]

 \begin{proof}[Proof of Theorem \ref{thmlb}(ii)]
 It suffices to notice that for any $\mathbf x\in O_v,$  
 \[\left|\mathbf x-\mathbf X_v\right|\leq \frac{1}{\kappa}\int_{\mathbb S^2_+}|\mathbf x-\mathbf y|v(\mathbf y)d\sigma 
 =\frac{1}{\kappa}\int_{O_v}|\mathbf x-\mathbf y|v(\mathbf y)d\sigma \leq  C\varepsilon,\]
where we have used Lemma \ref{sovc} in the last inequality.
 \end{proof}

\subsection{Limiting location of vortex cores}

In this subsection, we give the proof of Theorem \ref{thmlb}(iii). Let $v_\varepsilon\in\mathcal M_\varepsilon$ be arbitrarily chosen. It is clear that $\mathbf X_{v_\varepsilon}\in\mathbb S^2_+,$  where 
 $\mathbf X_{ v_\varepsilon}$ is the mass center of ${ v_\varepsilon}$ (cf. \eqref{doms}).
 So, 
  up to a subsequence, we can assume that  
\[\lim_{\varepsilon\to 0}\mathbf X_{v_\varepsilon}=\hat{\mathbf x}\in \left\{ (x_1,x_2,x_3)\in\mathbb S^2\mid x_3\geq 0 \right\}.\]
Our aim is to show that the third component $\hat x_3$ of $\hat{\mathbf x}$ satisfies
  \begin{equation}\label{tdcz}
  \hat x_3=\begin{cases}
 \frac{\kappa}{4\pi\lambda},&\mbox{if }\,\,\frac{\kappa}{4\pi\lambda}\leq  1,\\
 1,&\mbox{if }\,\,\frac{\kappa}{4\pi\lambda}>1.
 \end{cases}
 \end{equation}
For convenience,  we extend $v_\varepsilon$ to the whole sphere by setting $v_\varepsilon\equiv 0$ on $\mathbb S^2_-.$

Fix an arbitrary point $\tilde {\mathbf x}\in\mathbb S^2_+.$ 
For sufficiently small $\varepsilon,$  in view of Theorem \ref{thmlb}(ii), we can always take some $\mathsf R_\varepsilon\in\mathbf S\mathbf O(3)$ such that $\tilde v_\varepsilon:=v_\varepsilon\circ \mathsf R_\varepsilon$ satisfies
\[\tilde v_\varepsilon\in\mathcal R_\varepsilon\quad\mbox{and}\quad  \mathbf X_{\tilde v_\varepsilon}=\tilde {\mathbf x}.\] 
Then
 $\mathcal E(v_\varepsilon)\geq \mathcal E(\tilde v_\varepsilon),$ i.e.,
\begin{equation}\label{vve1}
\begin{split}
 &\frac{1}{4\pi}\int_{\mathbb S^2_+}\int_{\mathbb S^2_+}\ln\frac{|\mathbf x-\mathbf y'|}{|\mathbf x-\mathbf y|}v_{\varepsilon}(\mathbf x)v_\varepsilon(\mathbf y)d\sigma_{\mathbf x}d\sigma_{\mathbf y}-\lambda \int_{\mathbb S^2_+}x_3v_{\varepsilon}(\mathbf x) d\sigma \\
 \geq &\frac{1}{4\pi}\int_{\mathbb S^2_+}\int_{\mathbb S^2_+}\ln\frac{|\mathbf x-\mathbf y'|}{|\mathbf x-\mathbf y|}\tilde v_{\varepsilon}(\mathbf x)\tilde v_\varepsilon(\mathbf y)d\sigma_{\mathbf x}d\sigma_{\mathbf y}-\lambda \int_{\mathbb S^2_+}x_3\tilde v_{\varepsilon}(\mathbf x) d\sigma.
 \end{split}
\end{equation}
On the other hand, 
\begin{equation}\label{vve2}
\begin{split}
\int_{\mathbb S^2_+} \int_{\mathbb S^2_+}\ln {|\mathbf x-\mathbf y|}\tilde v_{\varepsilon}(\mathbf x)\tilde v_\varepsilon(\mathbf y)d\sigma_{\mathbf x}d\sigma_{\mathbf y}&=\int_{\mathsf R_\varepsilon^{-1}(O_{v_\varepsilon})} \int_{\mathsf R_\varepsilon^{-1}(O_{v_\varepsilon})}\ln {|\mathbf x-\mathbf y|}  \tilde v_{\varepsilon} ( \mathbf x ) \tilde v_\varepsilon (\mathbf y)d\sigma_{\mathbf x}d\sigma_{\mathbf y}\\
&=\int_{ O_{v_\varepsilon}} \int_{O_{v_\varepsilon}}\ln {\left|\mathsf R_\varepsilon^{-1}\mathbf x- \mathsf R_\varepsilon^{-1}\mathbf y\right|}  v_{\varepsilon}(\mathbf x)  v_\varepsilon( \mathbf y)d\sigma_{\mathbf x}d\sigma_{\mathbf y}\\
&= \int_{\mathbb S^2_+}\int_{\mathbb S^2_+}\ln {|\mathbf x-\mathbf y|}v_{\varepsilon}(\mathbf x)v_\varepsilon(\mathbf y)d\sigma_{\mathbf x}d\sigma_{\mathbf y}.
\end{split}
\end{equation}
Note that in the first equality of \eqref{vve2} we have used 
\[\left\{\mathbf x\in\mathbb S^2_+\mid \tilde v_\varepsilon(\mathbf x)>0\right\}=\mathsf R^{-1}_\varepsilon(O_{v_\varepsilon}),\]
as can be readily observed from the definition of $\tilde v_\varepsilon.$
From \eqref{vve1} and \eqref{vve2}, we get
\begin{equation}\label{vve3}
\begin{split}
 &\frac{1}{4\pi}\int_{\mathbb S^2_+}\int_{\mathbb S^2_+}\ln |\mathbf x-\mathbf y'| v_{\varepsilon}(\mathbf x)v_\varepsilon(\mathbf y)d\sigma_{\mathbf x}d\sigma_{\mathbf y}-\lambda \int_{\mathbb S^2_+}x_3v_{\varepsilon}(\mathbf x) d\sigma  \\
 \geq &\frac{1}{4\pi}\int_{\mathbb S^2_+}\int_{\mathbb S^2_+}\ln |\mathbf x-\mathbf y'| \tilde v_{\varepsilon}(\mathbf x)\tilde v_\varepsilon(\mathbf y)d\sigma_{\mathbf x}d\sigma_{\mathbf y}-\lambda \int_{\mathbb S^2_+}x_3\tilde v_{\varepsilon}(\mathbf x) d\sigma.
 \end{split}
\end{equation}
Using Theorem \ref{thmlb}(ii) again, we can pass to the limit $\varepsilon\to0$ in \eqref{vve3} to get $\hat x_3>0$ and
\[\frac{\kappa^2 }{4\pi}\ln(2\hat x_3)- \kappa\lambda \hat x_3\geq \frac{\kappa^2 }{4\pi}\ln(2\tilde x_3)-\kappa\lambda \tilde x_3.\]
Since $\tilde {\mathbf x}\in\mathbb S^2_+$ is arbitrary, we deduce that   $\hat x_3$ is in fact a global maximum point of the function
  \[W(s):=\frac{\kappa}{4\pi}\ln s-\lambda s \]
 in $(0,1]$. On the other hand, as can be readily verified, the function $W$ admits a unique global maximum point $s^*$ in $(0,1]$, given by
 \[s^*=   \begin{cases}
 \frac{\kappa}{4\pi\lambda},&\mbox{if }\,\,\frac{\kappa}{4\pi\lambda}\leq 1,\\
 1,&\mbox{if }\,\,\frac{\kappa}{4\pi\lambda}>1.
 \end{cases}\]
This proves Theorem \ref{thmlb}(iii).

 \section{Rotating sphere}\label{sec6}

 If the sphere rotates around the polar axis $\mathbf e_3$ at a constant angular speed $\Omega$, then any fluid particle at $\mathbf x=(x_1,x_2,x_3)\in\mathbb S^2$ with velocity $\mathbf v\in T_{\mathbf x}\mathbb S^2$ is affected by the Coriolis force $2\Omega x_3 J\mathbf v$. In this case, the Euler equation becomes
 \begin{equation}\label{eurs}
  \begin{cases}
    \partial_t\mathbf v+\nabla_{\mathbf v}\mathbf v=2\Omega x_3J\mathbf v -\nabla P, & t\in\mathbb R,\,\,  \mathbf x\in\mathbb S^2,\\
    {\rm div}\, \mathbf v=0.
  \end{cases}
\end{equation}

Repeating the argument in Subsection \ref{sec11}, we have the following vorticity formulation of \eqref{eurs}:
\[\partial_t\omega+J\nabla \mathcal G\omega \cdot\nabla(\omega+2\Omega x_3)=0,\quad t\in\mathbb R,\,\,  \mathbf x\in\mathbb S^2.\]
Introduce the absolute vorticity  
\[ \zeta:=\omega+2\Omega x_3,\]
which measures  the rotation of fluid particles relative to the sphere, taking into account both the rotation from the internal motion of the fluid and the rotation of the sphere.
 In terms of $\zeta,$ the  Euler equation  can be written as
\[\partial_t\zeta+  J\nabla(\mathcal G\zeta -\Omega x_3)\cdot\nabla \zeta=0,\quad t\in\mathbb R,\,\,  \mathbf x\in\mathbb S^2. \tag{$E_\Omega$} \]
Note that $(E_0)$ is exactly \eqref{vor0} with $\zeta=\omega$.
A useful relation between the solutions of $(E_0)$ and those of $(E_\Omega)$ is that
\begin{equation}\label{v0v}
 \mbox{\emph{$\zeta(t,\mathbf x)$ is a solution of $(E_0)$ if and only if  $\zeta(t,\mathsf R^{\mathbf e_3}_{\Omega t}\mathbf x)$ is a solution of $(E_\Omega)$.}}
\end{equation}

In the odd-symmetric setting, $(E_\Omega)$ is equivalent to
 \begin{equation}\label{eunr}
 \begin{cases}
 \partial_t\zeta +J\nabla(\mathcal G_+\zeta-\Omega x_3)\cdot\nabla \zeta=0, &t\in\mathbb R,\,\,\mathbf x\in \mathbb S^2_+,\\
 \mathcal G_+\zeta(t,\mathbf x)=  \frac{1}{2\pi}\int_{\mathbb S^2}\ln\frac{|\mathbf x-\mathbf y'|}{|\mathbf x-\mathbf y|}\zeta(t,\mathbf y)d\sigma.
 \end{cases}
 \end{equation}
Using \eqref{v0v}, it is easy to see that for any sufficiently regular solution $\zeta$ of \eqref{eunr}, the conservation laws \eqref{cl01}-\eqref{cl03} still holds with $\omega$ being replaced by $\zeta$.

For the rotating case, the motion of $N$ point vortices  $\mathbf x_1,\cdot\cdot\cdot,\mathbf x_N$ are described by the following system of ODEs:
 \begin{equation}\label{odes1r}
 \frac{d\mathbf x_i }{dt}=\sum_{j\neq i}\kappa_jJ\nabla_\mathbf x G(\mathbf x,\mathbf x_j)\bigg|_{\mathbf x=\mathbf x_i}-\Omega J\nabla  x_3,
 \end{equation}
which can also be written as
 \begin{equation}\label{odes2r}
 \frac{d\mathbf x_i }{dt}=\sum_{j\neq i}\frac{\kappa_j}{2\pi}\frac{\mathbf x_j\times\mathbf x_i}{|\mathbf x_i-\mathbf x_j|^2}-\Omega \mathbf e_3\times \mathbf x_i.
 \end{equation}
If $N=2,$ then \eqref{odes2r} becomes
  \begin{equation}\label{odes3r}
  \begin{cases}
 \frac{d\mathbf x_1 }{dt}= \frac{\kappa_2}{2\pi}\frac{\mathbf x_2\times\mathbf x_1}{|\mathbf x_1-\mathbf x_2|^2}-\Omega \mathbf e_3\times \mathbf x_1,\\
  \frac{d\mathbf x_2 }{dt}= \frac{\kappa_1}{2\pi}\frac{\mathbf x_1\times\mathbf x_2}{|\mathbf x_1-\mathbf x_2|^2}-\Omega \mathbf e_3\times \mathbf x_2.
 \end{cases}
 \end{equation}
For a pair of odd-symmetric vortices with $\kappa_1=-\kappa_2=\kappa$, an explicit rotating solution is
 \begin{equation}\label{2vmr}
 \begin{cases}
 \mathbf x_1(t)=\left(\cos\theta_0\cos ((\lambda -\Omega) t+\varphi_0),\cos\theta_0\sin( (\lambda -\Omega)   t+\varphi_0),\sin\theta_0\right),\\
  \mathbf x_2(t)=\left(\cos\theta_0\cos((\lambda -\Omega)  t+\varphi_0),
\cos\theta_0\sin((\lambda -\Omega)  t+\varphi_0),-\sin\theta_0\right),
  \end{cases}
  \end{equation}
where $\theta_0,\varphi_0\in\mathbb R$, and $\lambda$ still satisfies the relation  
\[4\pi\lambda\sin\theta_0=\kappa.\]

Based on the above discussion, especially using the relation \eqref{v0v}, we obtain the following theorem.

 \begin{theorem}
Let $\mathcal M$ be the set of maximizers of $(V)$ in Theorem \ref{thme}.
  Then
   \begin{itemize}
   \item [(i)]  For any $v\in\mathcal M$,  $v(\mathsf R^{\mathbf e_3}_{ (\Omega-\lambda) t}\mathbf x)$ solves \eqref{eunr} in the weak sense.
   \item[(ii)] The set $\mathcal M$ is stable under the dynamics of \eqref{eunr} in the following sense: for any $\epsilon>0$, there exists some $\delta>0$, such that for any  sufficiently smooth solution $\zeta(t,\mathbf x)$ of \eqref{eunr}, it holds that
  \[\inf_{v\in\mathcal M}\|\zeta(0,\cdot)-v\|_{L^p(\mathbb S^2_+)}<\delta\quad\Longrightarrow\quad \inf_{v\in\mathcal M}\|\zeta(t,\cdot)-v\|_{L^p(\mathbb S^2_+)}<\epsilon\quad\forall\,t\in\mathbb R.\]
       \end{itemize}
\end{theorem}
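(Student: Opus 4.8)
The plan is to deduce both parts from the corresponding facts on the non-rotating sphere together with the conjugacy \eqref{v0v} between the dynamics of $(E_0)$ and $(E_\Omega)$. In the odd-symmetric setting this conjugacy relates \eqref{eun0} and \eqref{eunr}: since $\mathsf R^{\mathbf e_3}_\alpha$ fixes the equatorial plane and commutes with the reflection $\mathbf x\mapsto\mathbf x'$, the odd extension commutes with rotations about $\mathbf e_3$, so one has the hemisphere version of \eqref{v0v}, namely that $\omega(t,\mathbf x)$ solves \eqref{eun0} if and only if $\omega(t,\mathsf R^{\mathbf e_3}_{\Omega t}\mathbf x)$ solves \eqref{eunr}.

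\emph{Part (i).} Fix $v\in\mathcal M$. By Theorem \ref{thme}(iii) the function $\omega(t,\mathbf x):=v(\mathsf R^{\mathbf e_3}_{-\lambda t}\mathbf x)$ is a weak solution of \eqref{eun0}. By the hemisphere version of \eqref{v0v}, $\zeta(t,\mathbf x):=\omega(t,\mathsf R^{\mathbf e_3}_{\Omega t}\mathbf x)$ is then a weak solution of \eqref{eunr}; and since rotations about a fixed axis compose additively, $\mathsf R^{\mathbf e_3}_{-\lambda t}\mathsf R^{\mathbf e_3}_{\Omega t}=\mathsf R^{\mathbf e_3}_{(\Omega-\lambda)t}$, whence $\zeta(t,\mathbf x)=v(\mathsf R^{\mathbf e_3}_{(\Omega-\lambda)t}\mathbf x)$, which is the assertion. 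If one prefers to avoid transporting a weak formulation through a time-dependent change of variables, an equivalent route is to substitute the ansatz $\zeta(t,\mathbf x)=v(\mathsf R^{\mathbf e_3}_{(\Omega-\lambda)t}\mathbf x)$ directly into the weak form of \eqref{eunr}: differentiating in $t$ via Rodrigues' formula \eqref{rod1} (the generator of rotation about $\mathbf e_3$ being the field $J\nabla x_3=\mathbf e_3\times\mathbf x$) and then changing variables $\mathbf x\mapsto\mathsf R^{\mathbf e_3}_{(\Omega-\lambda)t}\mathbf x$, the identity collapses — using that $\mathcal G_+$ commutes with rotations about the polar axis — to $\int_{\mathbb S^2_+}vJ\nabla(\mathcal G_+v-\lambda x_3)\cdot\nabla\xi\,d\sigma=0$ for all $\xi\in C_c^\infty(\mathbb S^2_+)$, which is precisely \eqref{wsf1}, valid by Theorem \ref{thme}(iii).

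\emph{Part (ii).} The proof of Theorem \ref{thmos} (via the stability proposition of Section \ref{sec4}) uses only: the compactness Proposition \ref{propcom}, which is a statement about the variational problem $(V)$ alone and is unaffected by the Coriolis term; the rearrangement lemmas of Section \ref{sec2}; and the fact that along the flow $t\mapsto\omega(t,\cdot)$ is an admissible map in the sense of Definition \ref{damp3}, i.e.\ $\mathcal E$ and the distribution function of the vorticity are conserved. As noted in Section \ref{sec6}, \eqref{v0v} guarantees that \eqref{cl01}--\eqref{cl03} persist for any sufficiently smooth solution $\zeta$ of \eqref{eunr} with $\omega$ replaced by $\zeta$; hence $\mathcal E(\zeta(t,\cdot))=\mathcal E(\zeta(0,\cdot))$ and $\zeta(t,\cdot)\in\mathcal R_{\zeta(0,\cdot)}$ for all $t$, so $t\mapsto\zeta(t,\cdot)$ is admissible. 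The argument of Section \ref{sec4} then applies verbatim with \eqref{eun0} replaced by \eqref{eunr}, yielding (ii).

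The only real, and minor, obstacle is the bookkeeping in part (i): transporting a weak formulation through the time-dependent rotation in \eqref{v0v} requires a little care in the $\partial_t$ term, which the direct substitution argument sidesteps by reducing everything to the already-proved stationary identity \eqref{wsf1}. Part (ii) introduces nothing new beyond verifying that the Coriolis term does not destroy the conservation laws, which is exactly the content of \eqref{v0v}.
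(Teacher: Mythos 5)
Your proposal is correct and follows exactly the route the paper intends: the paper gives no separate proof of this theorem, deriving it directly from the conjugacy \eqref{v0v} together with Theorem \ref{thme}(iii) for part (i) and the persistence of the conservation laws \eqref{cl01}--\eqref{cl03} (so that the admissible-map argument of Section \ref{sec4} applies verbatim) for part (ii). Your write-up simply supplies the bookkeeping — the composition $\mathsf R^{\mathbf e_3}_{-\lambda t}\mathsf R^{\mathbf e_3}_{\Omega t}=\mathsf R^{\mathbf e_3}_{(\Omega-\lambda)t}$ and the reduction to the stationary identity \eqref{wsf1} — that the paper leaves implicit.
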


\bigskip
\noindent{\bf Acknowledgements:} D. Cao was supported by National Key R\&D Program of China
	(Grant 2022YFA1005602) and NNSF of China (Grant 12371212). S. Li and G. Wang were supported by NNSF of China (Grant 12471101).

\bigskip
\noindent{\bf  Data availability statement} All data generated or analyzed during this study are included in this published article.

\bigskip
\noindent{\bf Conflict of interest}  The authors declare that they have no conflict of interest to this work.

\phantom{s}
 \thispagestyle{empty}

\end{document}